\newcommand{\pointtoproof}[1]{$\longrightarrow$ See page \pageref{#1} for the proof.\\}
\newcommand{\mydot}[2]{\langle#1, #2\rangle}
\begin{document}

% Author's names for the running heads
% Sample depending on the number of authors;
% \RUNAUTHOR{Jones}
% \RUNAUTHOR{Jones and Wilson}
% \RUNAUTHOR{Jones, Miller, and Wilson}
% \RUNAUTHOR{Jones et al.} % for four or more authors
% Enter authors following the given pattern:
%\RUNAUTHOR{}
\RUNAUTHOR{Vielma}

% Title or shortened title suitable for running heads. Sample:
% \RUNTITLE{Predictive Maintenance in Manufacturing}
% Enter the (shortened) title:
\RUNTITLE{Convex analysis for composite functions without {K}-convexity}

% Full title. Sample:
% \TITLE{Optimal Resource Allocation in Humanitarian Logistics: A Stochastic Programming Approach}
% Enter the full title:
\TITLE{Convex analysis for composite functions without {K}-convexity}

% Block of authors and their affiliations starts here:
% NOTE: Authors with same affiliation, if the order of authors allows,
%   should be entered in ONE field, separated by a comma.
%   \EMAIL field can be repeated if more than one author
\ARTICLEAUTHORS{%
%\AUTHOR{John Doe,\textsuperscript{a} Jane Smith,\textsuperscript{b}}
%\AFF{\textsuperscript{a}Department of Industrial Engineering, University of XYZ, \EMAIL{john.doe@xyz.edu; \textsuperscript{b}Department of Computer Science, University of ABC, \EMAIL{jane.smith@abc.edu}} 
\AUTHOR{Juan Pablo Vielma}
\AFF{
Google Research,
 \EMAIL{jvielma@google.com}
}
} % end of the block

\ABSTRACT{%
   Composite functions have been studied for over 40 years and appear in a wide range of optimization problems. Convex analysis of these functions focuses on (i) conditions for convexity of the function based on properties of its components, (ii) formulas for the convex conjugate of the function based on those of its components and (iii) chain-rule-like formulas for the sub-differential of the function. To the best of our knowledge all existing results on this matter are based on the notion of K-convexity of functions where K is a closed convex cone. These notions can be considered elementary when K is the non-negative orthant, but otherwise may limit the accessibility of the associated results. In this work we show how standard results on perturbation function based duality can be used to recover and extend existing results without the need for K-convexity. We also provide a detailed comparison with K-convexity based results and show that, while K-convexity is not needed for the convex analysis of composite functions, it certainly aids in verifying the conditions associated with our proposed results. Finally, we provide  necessary conditions for the conjugate and sub-differential formulas that are less restrictive than those required by the existing results.
}%

% Fill in data. If unknown, outcomment the field
\KEYWORDS{composite function, convex conjugate, subdifferential,  K-convexity} 
%\HISTORY{Received: Month DD, YYYY; Accepted: Month DD, YYYY; Published Online: Month DD, YYYY}

\maketitle

\section{Introduction}

For $g:\mathbb{R}^m\to \mathbb{R}$ and $F:\mathbb{R}^n \to \mathbb{R}^m$ consider the composite function $g\circ F:\mathbb{R}^n\to \mathbb{R}$ given by 
\[g\circ F(x)=g(F(x)) \quad \forall x\in \mathbb{R}^n.\]

As noted in \cite{burke2021study}, functions of the form $g\circ F$ have been studied for over 40 years and appear as objective functions for a wide range of optimization problems. In particular, the case where $g\circ F$ is convex has been extensively studied (e.g. \cite{burke2021study,boct2007new,boct2008new,ioan2009generalized,combari1994sous,gissler2023note,pennanen1999graph}).

Some natural questions in the convex case include (i) conditions for convexity of $g \circ F$ based on properties of $g$ and $F$, (ii) formulas for the convex conjugate of $g\circ F$ based on the conjugates of $g$ and $F$, and (iii) chain-rule-like formulas for the sub-differential of $g\circ F$.  A standard sufficient condition for convexity of $g\circ F$ when $F$ is a real-valued function (i.e. $m=1$) is monotonicity of $g$ and convexity of $F$. A first extension of these conditions to vector-valued $F$ (i.e. $m>1$) is to consider component-wise convexity of $F$ and component-wise monotonicity of $g$. As shown in \cite{hiriart2006note}, in addition to convexity of  $g\circ F$, these component-wise conditions also yield a relatively simple formula for the convex conjugate of $g\circ F$. 

The notions of component-wise convexity of $F$ and component-wise monotonicity are based on the partial order in $\mathbb{R}^m$ induced by the closed convex cone $\mathbb{R}_+^m$ (i.e. for $x,y\in \mathbb{R}^m$ we have $x\leq y$ if and only if $y-x\in \mathbb{R}_+^m$). Hence, it is natural to extend these concepts by considering an arbitrary closed convex cone $K$ to introduce the notions of $K$-convexity and $K$-monotonicity (e.g. see Section~\ref{connection:section}). To the best of our knowledge, all existing studies of $g\circ F$ for the convex case with $m>1$ (including \cite{burke2021study,boct2007new,boct2008new,ioan2009generalized,combari1994sous,gissler2023note,pennanen1999graph}) are based on these notions of $K$-convexity and $K$-monotonicity. In particular, combining these notions with perturbation-function based dualization (e.g. \cite[Section 11.H]{rockafellar2009variational}) can result in fairly general answers to all three questions above even in the infinite-dimensional setting (e.g. \cite{bot2009conjugate,ioan2009generalized}). 

It can be argued that the need for beyond-elemental convex analysis concepts such as perturbation-functions and $K$-convexity can limit the accessibility of the results. Motivated by this potential accessibility issue, \cite{burke2021study} focused on the finite dimensional case without directly applying the perturbation-function machinery to improve the accessibility of the associated result development and proofs. However, the statements of the associated results in \cite{burke2021study} (e.g. necessary conditions and formulas) still require the notions of $K$-convexity and $K$-monotonicity. For this reason, we take a different approach and focus on avoiding the notions of $K$-convexity and $K$-monotonicity altogether to obtain results whose statements only require elemental convex analysis concepts. That is, our focus is primarily to improve the accessibility for readers that seek to apply the results without necessarily looking at the proofs. Similarly to \cite{bot2009conjugate,ioan2009generalized}, the proofs of our results are based on perturbation-function based dualization, but in contrast to \cite{bot2009conjugate,ioan2009generalized} we only use a version that does not require the notions of $K$-convexity or $K$-monotonicity. Hence, we also get some accessibility improvements on the proofs themselves.

One additional advantage of avoiding $K$-convexity is that we can focus on additional properties of the functions $g$ and $F$ (e.g polyhedrality) which have been exploited by standard perturbation-function based dualization. This allows us to obtain necessary conditions for the conjugate and sub-differential formulas that are less restrictive than those required by the existing results in \cite{burke2021study,boct2007new,boct2008new,ioan2009generalized,combari1994sous,gissler2023note,pennanen1999graph}. However, a potential disadvantage is that some of these necessary conditions may be harder to verify than those based on $K$-convexity. For instance, one of these conditions requires convexity of a perturbation function that, while simple to state, could be hard to verify. Fortunately, this joint convexity is implied by some standard $K$-convexity conditions. For this reason we also provide a detailed comparison between our results and those based on $K$-convexity. In particular, we show how the results based on $K$-convexity can be more restrictive, but can aid in verifying the less restrictive conditions we consider.

The remainder of this paper is structured as follows. In Section~\ref{notation:section} we introduce some notation and assumptions. Then, in Section~\ref{perturbationsection} we review the perturbation-function based dualization results we will use for our main results, which we present in Section~\ref{convex:comp:section}. After that, we compare our main results with those based on $K$-convexity in Section~\ref{connection:section}. Finally, some omitted proofs are presented in Section~\ref{omitted:proofs:section} and Appendix~\ref{perturbationsection:proofs}.

\subsection{Notation and assumptions}\label{notation:section}

To consider extended real valued functions we let $\overline{\mathbb{R}}=\mathbb{R}\cup\{-\infty,+\infty\}$. For simplicity we use $\mathbb{R}^p$ for some $p\geq 1$ with inner product denoted by $\mydot{\cdot}{\cdot}$ as our vector spaces, but all our results also hold for Euclidean spaces through standard vectorization transformations (e.g. the space of $n\times n$ symmetric matrices can be \emph{unrolled} to $\mathbb{R}^p$  with $p=n(n+1)/2$). For a set $S\subseteq \mathbb{R}^p$ we let $\operatorname{rint}(S)$ be its relative interior and $\operatorname{cl}(S)$ be its closure. For a function  $h:\mathbb{R}^p\to\overline{\mathbb{R}}$ we let $\operatorname{dom}(h):=\{x\in \mathbb{R}^p\,:\, h(x)<+\infty\}$ be the effective domain of $h$,  $\operatorname{epi}(h):=\{(x,\alpha)\in \mathbb{R}^p\times \mathbb{R}\,:\, h(x)\leq \alpha\}$ be the epigraph of $h$, $\operatorname{cl}(h):\mathbb{R}^p\to\overline{\mathbb{R}}$ be the function such that $\operatorname{epi}\left(\operatorname{cl}(h)\right)=\operatorname{cl}\left(\operatorname{epi}(h)\right)$, and $h^*:\mathbb{R}^p\to\overline{\mathbb{R}}$ be the convex conjugate of $h$ given by
\[h^*(v):=\sup_{x\in \mathbb{R}^p}\left\{\mydot{v}{x}-h(x)\right\}\quad \forall v\in \mathbb{R}^p.\]
We say a function $h:\mathbb{R}^p\to\overline{\mathbb{R}}$ is \emph{convex} if $\operatorname{epi}(h)$ is convex, is \emph{closed} if $\operatorname{epi}(h)$ is closed, and is  \emph{proper} if $h\not \equiv +\infty$ and $h(x)>-\infty$ for all $x\in \mathbb{R}^p$. We also let 
\begin{align*}
    \Gamma\left(\mathbb{R}^p\right)&:=\left\{h:\mathbb{R}^p\to\overline{\mathbb{R}}\,:\, \text{$h$ is proper and convex}\right\}\\
    \Gamma_0\left(\mathbb{R}^p\right)&:=\left\{h\in  \Gamma\left(\mathbb{R}^p\right)\,:\, \text{$h$ is closed}\right\}.
\end{align*}

For a closed convex cone $K\subseteq \mathbb{R}^p$ we let $K^\circ:=\left\{v\in \mathbb{R}^p\,:\, \mydot{x}{v}\leq 0 \quad \forall x\in K\right\}$.

Finally, we use the following definitions to simplify some of our statements (e.g. to avoid pathological cases of $\arg \min$ without having to restrict its definition to proper functions).
\begin{definition}\label{sub:diff:def}
For $h:\mathbb{R}^n\to\overline{\mathbb{R}}$ we let 
\begin{align*}
    \arg \min_{x\in \mathbb{R}^n} h(x)&:=
    \left\{\bar{x}\in \mathbb{R}^n\,:\, h(\bar{x})\in \mathbb{R}, \quad h(\bar{x})=\inf_{x\in \mathbb{R}^n}h(x) \right\}\\
    \arg \max_{x\in \mathbb{R}^n} h(x)&:=\left\{\bar{x}\in \mathbb{R}^n\,:\, h(\bar{x})\in \mathbb{R}, \quad h(\bar{x})=\sup_{x\in \mathbb{R}^n}h(x) \right\}.
\end{align*}

If $h$ is convex and $\bar{x}\in \mathbb{R}^n$ we let 
\[\partial h(\bar{x}):=\begin{cases}\left\{v\in \mathbb{R}^n\,:\, h(x) \geq h(\bar{x})+\mydot{v}{\left(x-\bar{x}\right)}\quad \forall x\in\mathbb{R}^n\right\}&h(\bar{x})\in \mathbb{R}\\
\emptyset &\text{o.w.}\end{cases}.\]
\end{definition}
\section{Perturbation based Lagrangian duality}\label{perturbationsection}

For  $f\in \Gamma_0\left(\mathbb{R}^n\times \mathbb{R}^m\right)$ \cite[Section 11.H]{rockafellar2009variational} presents properties of the primal-dual pair 
\begin{subequations}\label{baseprimaldualpair}
\begin{align}
    \operatorname{opt}_P&:=\inf_{x\in \mathbb{R}^n} f(x,0)\\
    \operatorname{opt}_D&:=\sup_{y\in \mathbb{R}^m} -f^*(0,y).
\end{align}
\end{subequations}
For instance, \cite[Theorem 11.39]{rockafellar2009variational} shows that $\operatorname{opt}_P\geq \operatorname{opt}_D$ and gives conditions for this to hold at equality.

However, the results in \cite[Section 11.H]{rockafellar2009variational}  also imply properties of the slightly more general primal-dual pairs
\begin{subequations}\label{generalprimaldualpair}
\begin{align}
    \operatorname{opt}_P(\bar{u}, \bar{v})&:=\inf_{x\in \mathbb{R}^n} f(x,\bar{u})-\mydot{\bar{v}}{x}\\
    \operatorname{opt}_D(\bar{u}, \bar{v})&:=\sup_{y\in \mathbb{R}^m} \mydot{\bar{u}}{y} -f^*(\bar{v},y)
\end{align}
\end{subequations}
 whose parametrization on $(\bar{v},\bar{u})\in \mathbb{R}^n\times \mathbb{R}^m$ can be used to study the conjugate functions (e.g. $ \operatorname{opt}_P(0, \bar{v})=-(f(\cdot,0))^*(\bar{v})$). We now summarize these properties and connect them to the generalized nonlinear programming framework of \cite{rockafellar2023augmented}. While all these results follow from \cite[Section 11.H]{rockafellar2009variational} we give detailed proofs in Appendix~\ref{perturbationsection:proofs} for completeness. We begin with the basic definitions of Lagrangian function, primal and dual value functions, and primal and dual optimal sets. 

\begin{definition}
For $f:\mathbb{R}^n\times \mathbb{R}^m\to \overline{\mathbb{R}}$, let $l:\mathbb{R}^n\times \mathbb{R}^m\to \overline{\mathbb{R}}$ be the \emph{Lagrangian function} given by 
\begin{equation}\label{ldef}
    l(x,y):= -\left(f(x,\cdot)\right)^*(y).
\end{equation}
In addition, for $\bar{v}\in \mathbb{R}^n$, and $\bar{u}\in \mathbb{R}^m$, let $p_{\bar{v}}:\mathbb{R}^m\to \overline{\mathbb{R}}$ be the \emph{primal value function} and $q_{\bar{u}}:\mathbb{R}^n\to\overline{\mathbb{R}}$ be the \emph{negative of the dual value function} given by 
\begin{subequations}\label{pqdef}
\begin{alignat}{3}
    p_{\bar{v}}(\bar{u})&:=\inf_{x\in \mathbb{R}^n}\left\{ f(x,\bar{u})-\mydot{\bar{v}}{x}\right\}&&=-\sup_{x\in \mathbb{R}^n} \left\{\mydot{\bar{v}}{x} -f(x,\bar{u})\right\}\\
    q_{\bar{u}}(\bar{v})&:=\inf_{y\in\mathbb{R}^m}\left\{ f^*(\bar{v},y) - \mydot{\bar{u}}{y}\right\}&&=-\sup_{y\in \mathbb{R}^m}\left\{\mydot{\bar{u}}{y}-f^*(\bar{v},y)\right\}.\label{qdef}
    \end{alignat}
    \end{subequations}
Finally, let $P\left(\bar{u},\bar{v}\right)\subseteq \mathbb{R}^n$ be the \emph{primal optimal set} and $Q\left(\bar{u},\bar{v}\right)\subseteq \mathbb{R}^m$ be the \emph{dual optimal set} given by 

\begin{alignat*}{3}
    P\left(\bar{u},\bar{v}\right)&:= \arg\min_{x\in \mathbb{R}^n}\left\{f(x,\bar{u})-\mydot{\bar{v}}{x}\right\}&&=\arg\max_{x\in \mathbb{R}^n} \left\{\mydot{\bar{v}}{x} -f(x,\bar{u})\right\}\\
    Q\left(\bar{u},\bar{v}\right)&:=\arg\min_{y\in\mathbb{R}^m}\left\{ f^*(\bar{v},y) - \mydot{\bar{u}}{y}\right\}&&=\arg\max_{y\in \mathbb{R}^m}\left\{\mydot{\bar{u}}{y}-f^*(\bar{v},y)\right\}.
    \end{alignat*}
\end{definition}

\subsection{General weak duality and optimality conditions}\label{perturbationsection:weak}

The following theorem gives various flavors of weak duality and some optimality conditions that do not require any requirements on $f$.

\begin{restatable}{theorem}{GeneralWeakDual}\label{newGWDOP}
For any $f:\mathbb{R}^n\times \mathbb{R}^m\to \overline{\mathbb{R}}$ and $(\bar{v},\bar{u})\in \mathbb{R}^n\times \mathbb{R}^m$ we have that $q_{\bar{u}}$ is convex, \[f^*(\bar{v},\bar{y}) - \mydot{\bar{u}}{\bar{y}}=\sup_{x\in \mathbb{R}^n}\left\{\mydot{\bar{v}}{x}-\mydot{\bar{u}}{\bar{y}} -l(x,\bar{y})  \right\}\quad \forall \bar{y}\in \mathbb{R}^m\]  and  $
    q_{\bar{u}}(\bar{v}):=\inf_{y\in \mathbb{R}^m}\sup_{x\in \mathbb{R}^n}\left\{\mydot{\bar{v}}{x}-\mydot{\bar{u}}{y} -l(x,y)  \right\}$.

In addition, 
\begin{alignat}{5}
   \operatorname{opt}_P(\bar{u}, \bar{v})&= p_{\bar{v}}(\bar{u})&\geq \operatorname{cl} \operatorname{co}p_{\bar{v}}(\bar{u})
    &&\geq -q_{\bar{u}}(\bar{v})&=\phantom{-}\operatorname{opt}_D(\bar{u}, \bar{v}),\label{newweakGNLPduality}\\
   -\operatorname{opt}_P(\bar{u}, \bar{v})&= -p_{\bar{v}}(\bar{u}) &\leq
\phantom{\operatorname{co}}\operatorname{cl} q_{\bar{u}}(\bar{v})&& \leq \phantom{-} q_{\bar{u}}(\bar{v})&=-\operatorname{opt}_D(\bar{u}, \bar{v}).\label{newdualweakGNLPduality}
\end{alignat}

Finally, for any $(\bar{v},\bar{u})\in \mathbb{R}^n\times \mathbb{R}^m$ and $(\bar{x},\bar{y})\in \mathbb{R}^n\times \mathbb{R}^m$ 
\begin{equation}\label{newGNLP:optcond1}
    \bar{x}\in P\left(\bar{u},\bar{v}\right),\quad
    \bar{y}\in Q\left(\bar{u},\bar{v}\right),\quad \text{and}\quad
    p_{\bar{v}}(\bar{u})=
-q_{\bar{u}}(\bar{v})
\end{equation}
is equivalent to 
\begin{equation}\label{newGNLP:optcond2}
    f(\bar{x},\bar{u})-\mydot{\bar{v}}{\bar{x}}=l(\bar{x},\bar{y})+\mydot{\bar{u}}{\bar{y}} - \mydot{\bar{v}}{\bar{x}} =\mydot{\bar{u}}{\bar{y}}-f^*(\bar{v},\bar{y})\in \mathbb{R}.
\end{equation}

\end{restatable}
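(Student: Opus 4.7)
My plan is to unfold definitions, compute a few conjugate identities, and then assemble the weak-duality chains and the optimality equivalence from them. Throughout I would avoid any properness or closedness assumption on $f$, $p_{\bar{v}}$, or $q_{\bar{u}}$, being careful with extended-real arithmetic.

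First I would establish the formula for $f^*(\bar{v},\bar{y})-\mydot{\bar{u}}{\bar{y}}$ by substituting $l(x,\bar{y})=-(f(x,\cdot))^*(\bar{y})$ into the claimed right-hand side and collapsing the resulting double supremum over $(x,u)$ into the single supremum defining $f^*(\bar{v},\bar{y})$. Taking $\inf_{\bar{y}}$ of both sides then immediately gives the $\inf\sup$ representation of $q_{\bar{u}}(\bar{v})$. Convexity of $q_{\bar{u}}$ I would deduce not from this representation but from the observation that $f^*$ is always jointly convex in $(\bar{v},y)$ as a conjugate, so $(\bar{v},y)\mapsto f^*(\bar{v},y)-\mydot{\bar{u}}{y}$ is jointly convex, and partial infimization in $y$ preserves convexity in $\bar{v}$.

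For the weak-duality chain (\ref{newweakGNLPduality}) the crucial identity is $p_{\bar{v}}^*(y)=f^*(\bar{v},y)$, proved by swapping the order of an inner infimum and an outer supremum in a direct calculation from the definition of $p_{\bar{v}}$. This yields $p_{\bar{v}}^{**}(\bar{u})=\sup_y\{\mydot{\bar{u}}{y}-f^*(\bar{v},y)\}=-q_{\bar{u}}(\bar{v})$, after which the chain follows from the general inequalities $p_{\bar{v}}\ge\operatorname{cl}\operatorname{co}p_{\bar{v}}\ge p_{\bar{v}}^{**}$ that hold for any $\overline{\mathbb{R}}$-valued function, since $p_{\bar{v}}^{**}$ is a closed convex minorant of $p_{\bar{v}}$ while $\operatorname{cl}\operatorname{co}p_{\bar{v}}$ is the largest such. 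The second chain (\ref{newdualweakGNLPduality}) I would handle symmetrically: one computes $q_{\bar{u}}^*(x)=f^{**}(x,\bar{u})\le f(x,\bar{u})$, which gives $q_{\bar{u}}^{**}(\bar{v})\ge-p_{\bar{v}}(\bar{u})$, and since $q_{\bar{u}}$ has already been shown convex, $\operatorname{cl}q_{\bar{u}}\ge q_{\bar{u}}^{**}$ together with $\operatorname{cl}q_{\bar{u}}\le q_{\bar{u}}$ closes the chain.

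Finally, for the optimality equivalence I would use a sandwich argument. Plugging $x=\bar{x}$ into the formula for $f^*(\bar{v},\bar{y})-\mydot{\bar{u}}{\bar{y}}$ proved in the first step gives $\mydot{\bar{u}}{\bar{y}}-f^*(\bar{v},\bar{y})\le l(\bar{x},\bar{y})+\mydot{\bar{u}}{\bar{y}}-\mydot{\bar{v}}{\bar{x}}$, while writing $l(\bar{x},\bar{y})=\inf_u\{f(\bar{x},u)-\mydot{\bar{y}}{u}\}$ and evaluating at $u=\bar{u}$ gives $l(\bar{x},\bar{y})+\mydot{\bar{u}}{\bar{y}}-\mydot{\bar{v}}{\bar{x}}\le f(\bar{x},\bar{u})-\mydot{\bar{v}}{\bar{x}}$. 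Combined with the weak-duality pattern $\mydot{\bar{u}}{\bar{y}}-f^*(\bar{v},\bar{y})\le-q_{\bar{u}}(\bar{v})\le p_{\bar{v}}(\bar{u})\le f(\bar{x},\bar{u})-\mydot{\bar{v}}{\bar{x}}$, this squeezes everything into a single finite value as soon as either (\ref{newGNLP:optcond1}) or (\ref{newGNLP:optcond2}) holds, and each implication then reads off from matching the endpoints of the chain. The main technical nuisance I expect is precisely this handling of $\pm\infty$ cases: without a properness assumption on $f$ the identity $p_{\bar{v}}^{**}=\operatorname{cl}\operatorname{co}p_{\bar{v}}$ can fail, so I would rely only on the one-sided inequalities above, which are exactly what the theorem statement requires and which interact cleanly with the explicit finiteness clause \textit{in (\ref{newGNLP:optcond2})}.
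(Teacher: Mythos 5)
Your proposal is correct and follows essentially the same route as the paper, which merely factors the work into two auxiliary lemmas: the conjugate identities $p_{\bar{v}}^*(\cdot)=f^*(\bar{v},\cdot)$ and $q_{\bar{u}}^*(\cdot)=f^{**}(\cdot,\bar{u})\leq f(\cdot,\bar{u})$ (Lemma~\ref{pqlemma}, with convexity of $q_{\bar{u}}$ obtained exactly as you do, by partial infimization of the convex function $f^*$) and the general inequalities $h\geq\operatorname{co}(h)\geq\operatorname{cl}\operatorname{co}(h)\geq h^{**}$ (Lemma~\ref{doubleconjugatelemma}). Your sandwich chain for the equivalence of \eqref{newGNLP:optcond1} and \eqref{newGNLP:optcond2} is the paper's chain \eqref{sadelineqs}, with your direct evaluation of the infimum defining $l$ at $u=\bar{u}$ playing the role of the paper's step $f(\bar{x},\bar{u})\geq\left(f(\bar{x},\cdot)\right)^{**}(\bar{u})$.
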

\pointtoproof{newdualweakGNLPduality:proof}

In particular, we have that both \eqref{newweakGNLPduality} and \eqref{newdualweakGNLPduality} yield weak duality $\operatorname{opt}_P\geq \operatorname{opt}_D$ for primal-dual pair \eqref{baseprimaldualpair}, but  \eqref{newweakGNLPduality} allows for considering parametric limits on the primal value function $p_v$ while \eqref{newdualweakGNLPduality} allows the same on the (negative of) the dual value function $q_u$. Through the equivalence between \eqref{newGNLP:optcond1} and \eqref{newGNLP:optcond2} Theorem~\ref{newGWDOP} yields optimality conditions based on the Lagrangian function $l$. However, to obtain true \emph{saddle-point} conditions we need some convexity conditions on $f$ as described in the following proposition.

\begin{restatable}{proposition}{partialconvex}\label{partialconvex:label}
For any proper $f:\mathbb{R}^n\times \mathbb{R}^m\to \overline{\mathbb{R}}$ such that $f(x,u)$ is closed and convex in $u$, and for any $(\bar{v},\bar{u})\in \mathbb{R}^n\times \mathbb{R}^m$ we have
\[
 f(\bar{x},\bar{u})-\mydot{\bar{v}}{\bar{x}}=\sup_{y\in \mathbb{R}^{m}}\left\{l(\bar{x},y)+\mydot{\bar{u}}{ y}-\mydot{\bar{v}}{\bar{x}}\right\}\quad \forall \bar{x}\in \mathbb{R}^n\]
  and  $
    p_{\bar{v}}(\bar{u}):=\inf_{x\in \mathbb{R}^n}\sup_{y\in \mathbb{R}^{m}}\left\{l({x},y)+\mydot{\bar{u}}{ y}-\mydot{\bar{v}}{{x}}\right\}$.

In addition, 
\eqref{newGNLP:optcond2} is equivalent to
\begin{subequations}\label{newGNLP:optcond3}
\begin{align}
    \bar{x}&\in \arg\min_{x\in \mathbb{R}^n}\left\{ l({x},\bar{y}) +\mydot{\bar{u}}{\bar{y}}-\mydot{\bar{v}}{{x}}\right\},\\
    \bar{y}&\in \arg\max_{y\in\mathbb{R}^m}\left\{l(\bar{x},y)+\mydot{\bar{u}}{ y}-\mydot{\bar{v}}{\bar{x}}\right\}.
\end{align}
\end{subequations}
\end{restatable}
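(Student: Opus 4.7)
The plan is to reduce everything to the Fenchel--Moreau biconjugate theorem applied in the $u$-variable. Fix $\bar{x}\in\mathbb{R}^n$. Since $f$ is proper, $f(\bar{x},u)>-\infty$ for every $u$, so the closed convex function $f(\bar{x},\cdot)$ is either identically $+\infty$ --- in which case $(f(\bar{x},\cdot))^*\equiv-\infty$, $l(\bar{x},\cdot)\equiv+\infty$, and both sides of the claimed identity are $+\infty$ --- or it is closed proper convex, so Fenchel--Moreau yields $f(\bar{x},u)=\sup_{y}\{\mydot{y}{u}-(f(\bar{x},\cdot))^*(y)\}=\sup_{y}\{\mydot{y}{u}+l(\bar{x},y)\}$. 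Evaluating at $u=\bar{u}$ and subtracting $\mydot{\bar{v}}{\bar{x}}$ gives the first displayed formula, and taking $\inf_{x\in\mathbb{R}^n}$ of that equation then gives the formula for $p_{\bar{v}}(\bar{u})$.

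For the equivalence, assume first that \eqref{newGNLP:optcond2} holds. Its first equality says $f(\bar{x},\bar{u})-\mydot{\bar{v}}{\bar{x}}=l(\bar{x},\bar{y})+\mydot{\bar{u}}{\bar{y}}-\mydot{\bar{v}}{\bar{x}}$ with finite common value, and since by the identity just established the left-hand side is the supremum over $y$ of $l(\bar{x},y)+\mydot{\bar{u}}{y}-\mydot{\bar{v}}{\bar{x}}$, this says $\bar{y}$ attains that supremum, giving the argmax condition of \eqref{newGNLP:optcond3}. For the argmin condition, observe that $l(x,\bar{y})=\inf_{u}\{f(x,u)-\mydot{\bar{y}}{u}\}\leq f(x,\bar{u})-\mydot{\bar{y}}{\bar{u}}$, so that $l(x,\bar{y})+\mydot{\bar{u}}{\bar{y}}-\mydot{\bar{v}}{x}\leq f(x,\bar{u})-\mydot{\bar{v}}{x}$ for every $x$, with equality at $x=\bar{x}$ by assumption; and by the equivalence in \Cref{newGWDOP} combined with \eqref{newGNLP:optcond2}, $\bar{x}$ minimizes the right-hand side, so it must minimize the pointwise smaller left-hand side as well.

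Conversely, if \eqref{newGNLP:optcond3} holds, the argmax condition together with the first identity yields $f(\bar{x},\bar{u})-\mydot{\bar{v}}{\bar{x}}=l(\bar{x},\bar{y})+\mydot{\bar{u}}{\bar{y}}-\mydot{\bar{v}}{\bar{x}}\in\mathbb{R}$, i.e.\ the first equality of \eqref{newGNLP:optcond2}. For the second equality, \Cref{newGWDOP} gives $f^*(\bar{v},\bar{y})=\sup_{x}\{\mydot{\bar{v}}{x}-l(x,\bar{y})\}$; the argmin condition of \eqref{newGNLP:optcond3} says $\bar{x}$ attains this supremum with finite value, so $f^*(\bar{v},\bar{y})=\mydot{\bar{v}}{\bar{x}}-l(\bar{x},\bar{y})$, which rearranges to the second equality of \eqref{newGNLP:optcond2}. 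The main obstacle is the careful bookkeeping of improper edge cases --- in particular, checking that finiteness conditions in Definition~\ref{sub:diff:def} are preserved under the passage between the two forms --- rather than any substantive convex analysis beyond biconjugation and the identities already packaged in \Cref{newGWDOP}.
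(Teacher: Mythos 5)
Your overall route is the same as the paper's: biconjugation of $f(\bar{x},\cdot)$ in the $u$-variable (splitting off the $f(\bar{x},\cdot)\equiv+\infty$ case) gives the two displayed identities, and the equivalence is then read off from the resulting chain of inequalities. The first part, the entire converse direction, and the argmax half of the forward direction are all correct, including the bookkeeping of the finiteness requirements in Definition~\ref{sub:diff:def}.

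There is, however, a genuine logical gap in the argmin step of the forward direction. Writing $\phi(x):=l(x,\bar{y})+\mydot{\bar{u}}{\bar{y}}-\mydot{\bar{v}}{x}$ and $\psi(x):=f(x,\bar{u})-\mydot{\bar{v}}{x}$, you establish $\phi\leq\psi$ pointwise with $\phi(\bar{x})=\psi(\bar{x})$, and then conclude that because $\bar{x}$ minimizes $\psi$ it "must minimize the pointwise smaller left-hand side as well." That inference runs the wrong way: a minimizer of the \emph{majorant} at a point where the two functions touch need not minimize the minorant (take $\psi(x)=x^2$ and $\phi(x)=-x^2$ at $\bar{x}=0$). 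The valid version of this lemma is the reverse one --- a minimizer of the minorant at a touching point minimizes the majorant --- which is what proves \emph{sufficiency} of saddle conditions, not necessity. The conclusion you want follows instead from the part of \eqref{newGNLP:optcond2} you did not invoke: its second equality gives $l(\bar{x},\bar{y})+\mydot{\bar{u}}{\bar{y}}-\mydot{\bar{v}}{\bar{x}}=\mydot{\bar{u}}{\bar{y}}-f^*(\bar{v},\bar{y})\in\mathbb{R}$, and the identity $f^*(\bar{v},\bar{y})-\mydot{\bar{u}}{\bar{y}}=\sup_{x}\left\{\mydot{\bar{v}}{x}-\mydot{\bar{u}}{\bar{y}}-l(x,\bar{y})\right\}$ from Theorem~\ref{newGWDOP} identifies the right-hand side as $\inf_{x}\left\{l(x,\bar{y})+\mydot{\bar{u}}{\bar{y}}-\mydot{\bar{v}}{x}\right\}$; attainment at $\bar{x}$ with finite value is then exactly membership in the $\arg\min$ of Definition~\ref{sub:diff:def}. (Note this half needs no convexity at all; only the argmax half uses the hypothesis on $f(x,\cdot)$.) This is precisely how the paper's chain \eqref{sadelineqs} closes: under the closedness/convexity hypothesis the first link becomes an equality, and \eqref{newGNLP:optcond2} then forces every intermediate inequality in the chain to be tight simultaneously.
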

\pointtoproof{partialconvex:proof}

\subsection{Convex strong duality}\label{perturbationsection:strong}

The following theorem gives various flavors of strong duality and characterizations of the optimal sets under convexity of $f$.

\begin{restatable}{theorem}{CSD}\label{NewConvexStrongDual}
If $f\in \Gamma\left(\mathbb{R}^n\times \mathbb{R}^m\right)$ and $f(x,u)$  is closed in $u$, then $l(x,y)$ is convex in $x$ and concave in $y$
    and   \eqref{newGNLP:optcond3} is equivalent to
    \begin{subequations}\label{newGNLP:optcond4}
    \begin{align}
        0&\in \partial \left(l\left(\cdot, \bar{y}\right)+\mydot{\bar{u}}{\bar{y}}-\mydot{\bar{v}}{\cdot}\right)(\bar{x})\\
        0&\in \partial\left(-l\left(\bar{x}, \cdot\right)-\mydot{\bar{u}}{\cdot}+\mydot{\bar{v}}{\bar{x}}\right)(\bar{y}).
        \end{align}
    \end{subequations}
In addition, if $f\in \Gamma_0\left(\mathbb{R}^n\times \mathbb{R}^m\right)$ we have 
\begin{enumerate}
\item {\bf Optimal sets:} For all $(\bar{v},\bar{u})\in \mathbb{R}^n\times \mathbb{R}^m$ we have that $p_{\bar{v}}$ and $q_{\bar{u}}$ are convex, and 
\begin{subequations}\label{newGNLP:args}
     \begin{align}
      P\left(\bar{u},\bar{v}\right)&=\partial q_{\bar{u}}(\bar{v})\\
                 Q\left(\bar{u},\bar{v}\right)&=\partial p_{\bar{v}}(\bar{u}).
     \end{align}
\end{subequations}
   \item {\bf \emph{Primal-asymptotic} strong duality:} If $p_{\bar{v}}$ is proper, then
      $p_{\bar{v}}(\bar{u})\geq 
    \operatorname{cl} p_{\bar{v}}(\bar{u})
    = -q_{\bar{u}}(\bar{v})$ for all  $\bar{u}\in \mathbb{R}^m$.

      \item {\bf \emph{Primal-uniform} strong duality:}  If $p_{\bar{v}}$ is proper and closed, then
      $p_{\bar{v}}(\bar{u}) = -q_{\bar{u}}(\bar{v})$ for all  $\bar{u}\in \mathbb{R}^m$.

       \item {\bf \emph{Dual-asymptotic} strong duality:} If  $q_{\bar{u}}$ is proper, then 
       $-p_{\bar{v}}(\bar{u}) =
\operatorname{cl} q_{\bar{u}}(\bar{v})$ for all  $\bar{v}\in \mathbb{R}^n$.
     
              \item {\bf \emph{Dual-uniform} strong duality:} If  $q_{\bar{u}}$ is proper and closed, then 
              $-p_{\bar{v}}(\bar{u}) =
 q_{\bar{u}}(\bar{v})$ for all  $\bar{v}\in \mathbb{R}^n$.
   
\end{enumerate}
\end{restatable}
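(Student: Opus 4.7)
The plan is to decompose the theorem along its clauses and apply the standard conjugate-duality toolkit, leveraging the machinery already developed in Theorem~\ref{newGWDOP} and Proposition~\ref{partialconvex:label}.

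For the convex/concave properties of $l$ and the equivalence of~\eqref{newGNLP:optcond3} with~\eqref{newGNLP:optcond4}, I would start from the identity
\[
l(x,y)\;=\;-\left(f(x,\cdot)\right)^*(y)\;=\;\inf_{u\in\mathbb{R}^m}\left\{f(x,u)-\mydot{y}{u}\right\}.
\]
Concavity in $y$ is immediate since any conjugate is convex, and convexity in $x$ follows because $f(x,u)-\mydot{y}{u}$ is jointly convex in $(x,u)$ and the partial infimum of a jointly convex function is convex. The two functions appearing in~\eqref{newGNLP:optcond3} are then convex in their respective arguments, so~\eqref{newGNLP:optcond3}$\Leftrightarrow$\eqref{newGNLP:optcond4} is just the standard first-order condition $\bar{z}\in\arg\min h \Leftrightarrow 0\in\partial h(\bar{z})$ for convex proper $h$, applied to each line separately.

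The algebraic backbone of items~1--5 is the conjugate pair
\[
p_{\bar{v}}^*(y)=f^*(\bar{v},y), \qquad q_{\bar{u}}^*(x)=f^{**}(x,\bar{u})=f(x,\bar{u}),
\]
which I would obtain by swapping the order of suprema in the definitions of $p_{\bar{v}}^*$ and $q_{\bar{u}}^*$ and using $f=f^{**}$ for $f\in\Gamma_0$. These immediately yield the symmetric identities $p_{\bar{v}}^{**}(\bar{u})=-q_{\bar{u}}(\bar{v})$ and $q_{\bar{u}}^{**}(\bar{v})=-p_{\bar{v}}(\bar{u})$, while convexity of $p_{\bar{v}}$ and $q_{\bar{u}}$ follows once more from the partial-infimum-of-convex-function argument. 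The optimal-set formulas~\eqref{newGNLP:args} then fall out of the Fenchel--Young equality characterization of the subdifferential: for instance $x\in\partial q_{\bar{u}}(\bar{v})$ is equivalent to $q_{\bar{u}}(\bar{v})+f(x,\bar{u})=\mydot{\bar{v}}{x}$ with all three quantities finite, which, when combined with the weak-duality sandwich~\eqref{newweakGNLPduality}, forces $f(x,\bar{u})-\mydot{\bar{v}}{x}=p_{\bar{v}}(\bar{u})$ and hence $x\in P(\bar{u},\bar{v})$; the matching identity for $Q$ is obtained symmetrically.

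Items~2--5 then rest on the fact that for a convex proper function the biconjugate equals the closure, so $p_{\bar{v}}^{**}(\bar{u})=-q_{\bar{u}}(\bar{v})$ rewrites as $\operatorname{cl}p_{\bar{v}}(\bar{u})=-q_{\bar{u}}(\bar{v})$ whenever $p_{\bar{v}}$ is proper, giving primal-asymptotic strong duality; primal-uniform strong duality follows when additionally $\operatorname{cl}p_{\bar{v}}=p_{\bar{v}}$, and the dual-side items are symmetric via $q_{\bar{u}}^{**}=\operatorname{cl}q_{\bar{u}}$. \emph{The hard part} will be the conjugate-interchange step that produces $p_{\bar{v}}^*=f^*(\bar{v},\cdot)$ and $q_{\bar{u}}^*=f(\cdot,\bar{u})$, which requires careful handling of the order of suprema and of possible $\pm\infty$ values of the value functions. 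Once that identity is in place, matching the Fenchel--Young equality to the $\arg\min$/$\arg\max$ definitions while tracking exactly which inequalities in~\eqref{newweakGNLPduality}--\eqref{newdualweakGNLPduality} collapse to equalities is the only remaining bookkeeping subtlety.
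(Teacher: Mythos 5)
Most of your outline tracks the paper's own proof quite closely: convexity and concavity of $l$ via the inf-projection of a jointly convex function and the convexity of conjugates, the equivalence of \eqref{newGNLP:optcond3} and \eqref{newGNLP:optcond4} as a first-order optimality condition for convex functions, the conjugate identities $p_{\bar{v}}^*=f^*(\bar{v},\cdot)$ and $q_{\bar{u}}^*=f^{**}(\cdot,\bar{u})=f(\cdot,\bar{u})$ (the paper's Lemma~\ref{pqlemma}), and items 2--5 from $h^{**}=\operatorname{cl}h$ for proper convex $h$ (the paper's Lemma~\ref{doubleconjugatelemma} applied to $p_{\bar{v}}$ and $q_{\bar{u}}$). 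Those parts are correct and essentially identical to the paper's route.

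The genuine gap is in the optimal-set identities \eqref{newGNLP:args}: your Fenchel--Young argument proves only one inclusion, $\partial q_{\bar{u}}(\bar{v})\subseteq P(\bar{u},\bar{v})$. From $x\in\partial q_{\bar{u}}(\bar{v})$ you correctly get $q_{\bar{u}}(\bar{v})+f(x,\bar{u})=\mydot{\bar{v}}{x}$, and the weak-duality sandwich collapses to give $x\in P(\bar{u},\bar{v})$. But the reverse inclusion does not follow from this reasoning: if $x\in P(\bar{u},\bar{v})$ you only know $f(x,\bar{u})-\mydot{\bar{v}}{x}=p_{\bar{v}}(\bar{u})\in\mathbb{R}$, and to conclude the Fenchel--Young equality $q_{\bar{u}}(\bar{v})+q_{\bar{u}}^*(x)=\mydot{\bar{v}}{x}$ you would need $q_{\bar{u}}(\bar{v})=-p_{\bar{v}}(\bar{u})$, i.e.\ a zero duality gap at $(\bar{u},\bar{v})$ --- exactly what weak duality does not supply, since $q_{\bar{u}}(\bar{v})\geq -p_{\bar{v}}(\bar{u})$ can be strict even when the primal infimum is attained. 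What your computation yields unconditionally is $P(\bar{u},\bar{v})=\partial q_{\bar{u}}^{**}(\bar{v})$, because membership in $P(\bar{u},\bar{v})$ is the Fenchel--Young equality for the pair $(q_{\bar{u}}^*,q_{\bar{u}}^{**})$ rather than for $(q_{\bar{u}},q_{\bar{u}}^*)$; passing from $\partial q_{\bar{u}}^{**}(\bar{v})$ to $\partial q_{\bar{u}}(\bar{v})$ requires knowing that $q_{\bar{u}}$ agrees with its closure wherever the latter is subdifferentiable. The paper closes this direction by invoking its Lemma~\ref{subdifferentiallemma}, the identity $\partial h(\bar{x})=\arg\max_{v}\left\{\mydot{v}{\bar{x}}-h^*(v)\right\}$ applied to $h=q_{\bar{u}}$ together with $q_{\bar{u}}^*=f(\cdot,\bar{u})$; that step is doing real work that your sketch omits, and you must either justify it for the possibly non-closed convex function $q_{\bar{u}}$ or supply an argument ruling out a duality gap at points of primal attainment. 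The same issue arises symmetrically for $Q(\bar{u},\bar{v})=\partial p_{\bar{v}}(\bar{u})$, which you dismiss as ``obtained symmetrically.''
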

\pointtoproof{NewConvexStrongDual:proof}

Theorem~\ref{NewConvexStrongDual} implicitly contains three classes of qualification conditions beyond the baseline $f\in \Gamma_0\left(\mathbb{R}^n\times \mathbb{R}^m\right)$.

First, requiring $p_{\bar{v}}$ or $q_{\bar{u}}$ to be proper yield asymptotic versions of strong duality where a zero duality gap is achieved through taking limits on the primal or dual perturbations $\bar{u}$ or $\bar{v}$. 

Second, requiring $p_{\bar{v}}$ or $q_{\bar{u}}$ to be proper and closed yields non-asymptotic versions of strong duality where zero duality gap is achieved for all perturbations. 

Third, non-emptyness of the sub-differentials in \eqref{newGNLP:args} yields attainment of the primal or dual optimal values. 

Describing these qualification conditions through properties of $p_{\bar{v}}$ and $q_{\bar{u}}$ instead of more typical conditions (e.g. Slater-like conditions) allows for generic and possibly ad-hoc conditions that exploit the structure of particular instantiations of these functions (and indirectly of $f$). However, these generic conditions are also implied by more typical conditions, which can be a better trade-off between applicability and simplicity. \cref{strongattainmentcoro:label} and \cref{strongattainmentcorodual:label} below gives a Slater-like conditions that we will use in our study of composite functions. 

\begin{definition}
$h:\mathbb{R}^n\to \mathbb{R}\cup\{+\infty\}$ is \emph{piecewise linear-quadratic} (PWLQ) if and only if
\[
h(x)=\begin{cases} \frac{1}{2}\mydot{x}{A^ix}+\mydot{q^i}{x} -u_i&x\in P_i, i\in I\\+\infty&\text{o.w.}\end{cases},
\]
where $I$ is finite and for each $i\in I$, $P_i$ is a polyhedron, $A^i\in \mathbb{R}^{n\times n}$, $q^i\in\mathbb{R}^n$ and $u_i\in \mathbb{R}$. 
\end{definition}

\begin{restatable}{corollary}{strongattainmentcoro}[Primal Slater Condition]\label{strongattainmentcoro:label}
Let  $f\in \Gamma_0\left(\mathbb{R}^n\times \mathbb{R}^m\right)$, $\bar{v}\in \mathbb{R}^n$, $\bar{u}\in \mathbb{R}^m$, and $p_{\bar{v}}:\mathbb{R}^m\to \overline{\mathbb{R}}$, $q_{\bar{u}}:\mathbb{R}^n\to\overline{\mathbb{R}}$ be the functions given by \eqref{pqdef}. Then, 
\begin{itemize}
    \item $q_{\bar{u}}$ is proper if $\bar{u}\in \operatorname{dom}(p_0)$, 
    \item $q_{\bar{u}}$ is closed if $\bar{u}\in \operatorname{rint} \operatorname{dom}(p_0)$, or
 $\bar{u}\in  \operatorname{dom}(p_0)$ and $f$ is PWLQ,
 and 
 \item $\partial p_{\bar{v}}(\bar{u})\neq\emptyset$ if $\bar{u}\in \operatorname{rint} \operatorname{dom}(p_0)$ and $p_{\bar{v}}(\bar{u})>-\infty$, or
 $\bar{u}\in  \operatorname{dom}(p_0)$ and $f$ is PWLQ, 
\end{itemize}

In particular, if  either  $\bar{u}\in \operatorname{rint} \operatorname{dom}(p_0)$, or
 $\bar{u}\in  \operatorname{dom}(p_0)$ and $f$ is PWLQ, then for any $\bar{v}\in \mathbb{R}^n$ we have
\begin{equation}\label{dual:attainment:eq}
    \exists \bar{y}\in \mathbb{R}^m\quad\text{s.t}\quad \mydot{\bar{u}}{\bar{y}}-f^*\left(\bar{v},\bar{y}\right)=-q_{\bar{u}}\left(\bar{v}\right)=p_{\bar{v}}\left(\bar{u}\right)\in \mathbb{R}\cup\{-\infty\}.
    \end{equation}
\end{restatable}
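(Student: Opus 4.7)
The plan is to build everything on the single identity
\[q_{\bar u}^*(v) = f(v, \bar u) \qquad \forall\, v \in \mathbb{R}^n,\]
which follows by combining the outer supremum in $\bar v$ in the definition of $q_{\bar u}^*$ with the supremum in $y$ hidden inside $-q_{\bar u}$, and then invoking $f = f^{**}$ (valid since $f \in \Gamma_0$). Combined with the elementary observation $\operatorname{dom}(p_v) = \operatorname{dom}(p_0)$ for every $v$ (both memberships reduce to the existence of some $x$ with $f(x, u) < +\infty$) and the biconjugate computation $q_{\bar u}^{**}(v) = (f(\cdot, \bar u))^*(v) = -p_v(\bar u)$, this identity provides all the structural information needed for the three bullets.

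For the first bullet, $\bar u \in \operatorname{dom}(p_0)$ makes $q_{\bar u}^* = f(\cdot, \bar u)$ proper---not $\equiv +\infty$ by definition of $\operatorname{dom}(p_0)$, nowhere $-\infty$ by properness of $f$---so $q_{\bar u}^{**} \in \Gamma_0(\mathbb{R}^n)$; combining $q_{\bar u}^{**} \leq q_{\bar u}$ with the convexity of $q_{\bar u}$ from \cref{newGWDOP} then yields properness of $q_{\bar u}$. For the second bullet in the relative-interior case, closedness of $q_{\bar u}$ reduces to the pointwise identity $q_{\bar u}(v) = q_{\bar u}^{**}(v) = -p_v(\bar u)$ for every $v$. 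If $p_v(\bar u) = -\infty$, weak duality from \cref{newGWDOP} already forces $q_{\bar u}(v) = +\infty$; otherwise $\bar u \in \operatorname{rint}\operatorname{dom}(p_v) = \operatorname{rint}\operatorname{dom}(p_0)$ combined with $p_v(\bar u) > -\infty$ forces $p_v$ to be proper (an improper convex function takes $-\infty$ throughout $\operatorname{rint}\operatorname{dom}$), and two standard facts---that a proper convex function equals its closure on the relative interior of its domain, and primal-asymptotic strong duality from \cref{NewConvexStrongDual}---combine to give the identity. For the third bullet in the relative-interior case, the same argument shows $p_{\bar v}$ is proper with $\bar u \in \operatorname{rint}\operatorname{dom}(p_{\bar v})$, so standard subdifferentiability of proper convex functions on the relative interior of their domain delivers $\partial p_{\bar v}(\bar u) \neq \emptyset$.

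For the PWLQ branches of bullets 2 and 3, I instead use classical PWLQ duality: the infimal projection of a PWLQ function is PWLQ whenever it is proper, and a PWLQ program attains its optimum whenever its optimal value is finite. Applied to $q_{\bar u}(v) = \inf_y\{f^*(v, y) - \mydot{\bar u}{y}\}$ with $f^*$ PWLQ, this combined with bullet 1 delivers closedness of $q_{\bar u}$; for bullet 3, one then uses $Q(\bar u, \bar v) = \partial p_{\bar v}(\bar u)$ from \cref{NewConvexStrongDual} together with PWLQ attainment of the dual infimum defining $q_{\bar u}(\bar v)$. Finally, the ``In particular'' conclusion follows by combining bullets 1--2 (which give $q_{\bar u} \in \Gamma_0$) with dual-uniform strong duality $-p_{\bar v}(\bar u) = q_{\bar u}(\bar v)$ from \cref{NewConvexStrongDual}: when $p_{\bar v}(\bar u) \in \mathbb{R}$, bullet 3 supplies a $\bar y \in Q(\bar u, \bar v) = \partial p_{\bar v}(\bar u)$ achieving the dual value; when $p_{\bar v}(\bar u) = -\infty$, $q_{\bar u}(\bar v) = +\infty$ forces $f^*(\bar v, \cdot) \equiv +\infty$, and any $\bar y$ works. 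The main obstacle will be the PWLQ case, where $p_{\bar v}$ may fail to be proper and the relative-interior machinery does not apply; this forces reliance on specialized PWLQ infimal-projection and attainment results rather than the biconjugate arguments used in the relative-interior case.
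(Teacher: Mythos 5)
Your proposal is correct, and on the properness bullet, the subdifferential bullet, and the final attainment argument it runs essentially parallel to the paper's route: the paper funnels the three bullets through \cref{pqproperlemmalabel}, \cref{pqclosedproposition:label} and \cref{subdifflemma:label}, and then closes with \cref{NewConvexStrongDual} exactly as you do (finite case via a $\bar{y}\in Q(\bar{u},\bar{v})=\partial p_{\bar{v}}(\bar{u})$, the $-\infty$ case via $f^*(\bar{v},\cdot)\equiv+\infty$). The genuine divergence is in the relative-interior case of the closedness bullet. The paper's \cref{pqclosedproposition:label} is a set-level argument: it sandwiches $\operatorname{epi}(q_{\bar{u}})$ between the linear image $L(C)$ of $C=\{(v,y,z)\,:\,f^*(v,y)-\mydot{\bar{u}}{y}\leq z\}$ and its closure, translates $\bar{u}\in\operatorname{rint}\operatorname{dom}(p_0)$ into a recession-cone condition on $C_\infty\cap N(L)$ via the horizon function $f^{*\infty}$ and support functions, and invokes a closedness-of-linear-image criterion. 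You instead prove the pointwise identity $q_{\bar{u}}=q_{\bar{u}}^{**}$ directly: using $\operatorname{dom}(p_v)=\operatorname{dom}(p_0)$ (\cref{pqdomlemmalabel}) and $q_{\bar{u}}^{**}(v)=-p_v(\bar{u})$ (\cref{pqlemma}), you split on whether $p_v(\bar{u})=-\infty$ (weak duality forces $q_{\bar{u}}(v)=+\infty$) or $p_v(\bar{u})\in\mathbb{R}$ (then $p_v$ is proper because an improper convex function is $-\infty$ throughout the relative interior of its domain, and agreement of a proper convex function with its closure on that relative interior combines with primal-asymptotic strong duality). This is a more elementary route, using only standard facts about convex functions on relative interiors plus the conjugacy relations already recorded in \cref{pqlemma}, and it makes transparent why the relative-interior hypothesis is the right one; the paper's recession-cone argument is heavier but is the version that extends to more general closedness-of-image criteria. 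Both deliver the same conclusion here.

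One shared caveat: in the PWLQ branch of the third bullet, both your argument and the paper's rely on PWLQ inf-projection and attainment results that presuppose properness (a PWLQ function never takes $-\infty$ by definition), so neither covers the case $p_{\bar{v}}(\bar{u})=-\infty$, where $\partial p_{\bar{v}}(\bar{u})=\emptyset$ by \cref{sub:diff:def}; you flag this obstacle explicitly but do not resolve it, and the paper does not either. The final display \eqref{dual:attainment:eq} is unaffected, since that case is handled separately there.
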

\pointtoproof{strongattainmentcoro:label:proof}
Note that \eqref{dual:attainment:eq} implies that strong duality holds for primal-dual pair \eqref{generalprimaldualpair} and the dual optimal value is attained if finite.

\begin{restatable}{corollary}{strongattainmentcorodual}[Dual Slater Condition]\label{strongattainmentcorodual:label}
Let  $f\in \Gamma_0\left(\mathbb{R}^n\times \mathbb{R}^m\right)$, $\bar{v}\in \mathbb{R}^n$, $\bar{u}\in \mathbb{R}^m$, and $p_{\bar{v}}:\mathbb{R}^m\to \overline{\mathbb{R}}$, $q_{\bar{u}}:\mathbb{R}^n\to\overline{\mathbb{R}}$ be the functions given by \eqref{pqdef}. Then, 
\begin{itemize}
    \item $p_{\bar{v}}$ is proper if $\bar{v}\in \operatorname{dom}(q_0)$, 
    \item $p_{\bar{v}}$ is closed  if $\bar{v}\in \operatorname{rint} \operatorname{dom}(q_0)$, or
 $\bar{v}\in  \operatorname{dom}(q_0)$ and $f$ is PWLQ, and
 \item $\partial q_{\bar{u}}(\bar{v})\neq\emptyset$ if $\bar{v}\in \operatorname{rint} \operatorname{dom}(q_0)$ and $q_{\bar{u}}(\bar{v})>-\infty$, or
 $\bar{v}\in  \operatorname{dom}(q_0)$ and $f$ is PWLQ.
\end{itemize}

In particular, if  either  $\bar{v}\in \operatorname{rint} \operatorname{dom}(q_0)$, or
 $\bar{v}\in  \operatorname{dom}(q_0)$ and $f$ is PWLQ, then for any $\bar{u}\in \mathbb{R}^m$ we have
\begin{equation}\label{primal:attainment:eq}\exists \bar{x}\in \mathbb{R}^n\quad\text{s.t}\quad f\left(\bar{x},\bar{u}\right)-\mydot{\bar{v}}{\bar{x}}=p_{\bar{v}}\left(\bar{u}\right)=-q_{\bar{u}}\left(\bar{v}\right)\in \mathbb{R}\cup\{+\infty\}.\end{equation}
\end{restatable}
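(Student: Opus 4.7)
The plan is to reduce this to \cref{strongattainmentcoro:label} by passing to a dualized function that swaps primal and dual roles. Define $\tilde{f}:\mathbb{R}^m\times\mathbb{R}^n\to\overline{\mathbb{R}}$ by $\tilde{f}(y,v):=f^*(v,y)$. Since $f\in\Gamma_0(\mathbb{R}^n\times\mathbb{R}^m)$ yields $f^*\in\Gamma_0(\mathbb{R}^n\times\mathbb{R}^m)$, and permuting coordinates preserves properness, convexity and closedness, we have $\tilde{f}\in\Gamma_0(\mathbb{R}^m\times\mathbb{R}^n)$. Moreover, the biconjugate identity $f^{**}=f$ gives $\tilde{f}^*(\tilde{v},\tilde{u})=f(\tilde{u},\tilde{v})$, so the dual-side variables for $\tilde{f}$ are exactly the primal-side variables for $f$ and vice versa.

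Next I would verify by direct computation from \eqref{pqdef} that the primal and dual value functions $\tilde{p}$ and $\tilde{q}$ of $\tilde{f}$ satisfy $\tilde{p}_{\bar{u}}(\bar{v})=q_{\bar{u}}(\bar{v})$ and $\tilde{q}_{\bar{v}}(\bar{u})=p_{\bar{v}}(\bar{u})$ under the identification $\tilde{v}\leftrightarrow\bar{u}$, $\tilde{u}\leftrightarrow\bar{v}$; in particular $\operatorname{dom}(\tilde{p}_0)=\operatorname{dom}(q_0)$. For the PWLQ hypothesis I would invoke \cite[Theorem 11.14]{rockafellar2009variational}, which says that conjugation preserves the PWLQ property within $\Gamma_0$; since swapping coordinates preserves PWLQ trivially, $f$ is PWLQ if and only if $\tilde{f}$ is.

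With these identifications in hand, \cref{strongattainmentcoro:label} applied to $\tilde{f}$ at $(\tilde{u},\tilde{v})=(\bar{v},\bar{u})$ yields each of the three bullets of the present statement verbatim: properness of $\tilde{q}_{\tilde{u}}$ becomes properness of $p_{\bar{v}}$, closedness of $\tilde{q}_{\tilde{u}}$ becomes closedness of $p_{\bar{v}}$, and non-emptyness of $\partial\tilde{p}_{\tilde{v}}(\tilde{u})$ becomes non-emptyness of $\partial q_{\bar{u}}(\bar{v})$. Finally, \eqref{dual:attainment:eq} applied to $\tilde{f}$ asserts existence of $\bar{x}\in\mathbb{R}^n$ with $\mydot{\bar{v}}{\bar{x}}-\tilde{f}^*(\bar{u},\bar{x})=\tilde{p}_{\bar{u}}(\bar{v})\in\mathbb{R}\cup\{-\infty\}$, which upon substituting $\tilde{f}^*(\bar{u},\bar{x})=f(\bar{x},\bar{u})$ and $\tilde{p}_{\bar{u}}(\bar{v})=q_{\bar{u}}(\bar{v})=-p_{\bar{v}}(\bar{u})$ rearranges to \eqref{primal:attainment:eq}. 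The only real care needed is consistent tracking of the argument swap, and in particular verifying that the PWLQ hypothesis is genuinely symmetric under conjugation so that the ``$f$ is PWLQ'' assumption can be transferred to $\tilde{f}$; once that is done the proof is essentially bookkeeping.
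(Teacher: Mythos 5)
Your reduction is correct: with $f\in\Gamma_0(\mathbb{R}^n\times\mathbb{R}^m)$ the biconjugate identity makes the swap $\tilde{f}(y,v):=f^*(v,y)$ an exact involution of the framework, the identifications $\tilde{p}_{\bar{u}}=q_{\bar{u}}$ and $\tilde{q}_{\bar{v}}=p_{\bar{v}}$ check out against \eqref{pqdef}, and you correctly flag the one point that actually needs an external fact, namely that PWLQ is preserved under conjugation (\cite[Theorem 11.14]{rockafellar2009variational}), which the paper itself already invokes in the proof of \cref{strongattainmentcoro:label}. The paper takes a different, less formal route: it simply declares the proof ``analogous'' to that of \cref{strongattainmentcoro:label}, substituting \cref{pqclosedproposition:primal:label} (itself proved ``analogously'' to \cref{pqclosedproposition:label}) for the closedness step, so the symmetry is exploited by re-running the argument with the roles of $p$ and $q$ exchanged rather than by a formal change of variables. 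Your approach buys something concrete: it derives the dual corollary \emph{as a consequence} of the primal one, so the separate ``analogous'' proposition and its re-derivation become unnecessary, and the sign and domain bookkeeping is forced to be explicit rather than left implicit in the word ``analogous.'' The paper's route is marginally more self-contained in that it never needs $f^{**}=f$ for this particular corollary (though that identity is assumed throughout anyway via $f\in\Gamma_0$, so nothing is lost). One small caution if you were to write this up fully: the symmetry between $p$ and $q$ in \cref{pqlemma} is only exact under $f\in\Gamma_0$ (note the inequality in \eqref{pqlemmaeq:3} for general $f$), so your reduction genuinely relies on the closedness hypothesis and would not transfer to the weaker settings of \cref{newGWDOP}; within the hypotheses of this corollary, however, the argument is sound.
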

\pointtoproof{strongattainmentcorodual:label:proof}
Note that \eqref{primal:attainment:eq} implies that 
strong duality holds for primal-dual pair \eqref{generalprimaldualpair} and the primal optimal value is attained if finite.

\subsection{Generalized nonlinear programming}\label{GNLP:sec}

The generalized nonlinear programming (GNLP) framework of \cite{rockafellar2023augmented} follows from the results in \cite[Section 11.H]{rockafellar2009variational} by taking the specific function $f$ described in the following lemma. A very simple extension of the GNLP framework allows us to cover composite functions in their full generality (see \cref{gnlplagrangianlemma}). Finally, we note that the GNLP framework has theoretical and practical implications that go beyond the scope of this paper (e.g. see \cite{rockafellar2023augmented,rockafellar2023convergence}).

\begin{lemma}[\cite{rockafellar2023augmented}]\label{simplegnlplagrangianlemma}
For $f_0:\mathbb{R}^n\to \overline{\mathbb{R}}$, $F:\mathbb{R}^n\to \mathbb{R}^m$ and $g:\mathbb{R}^m\to \overline{\mathbb{R}}$  let $f:\mathbb{R}^n\times \mathbb{R}^m\to \overline{\mathbb{R}}$ be given by 
\[f(x,u):=f_0(x) +g(F(x)+u)\]
and $l:\mathbb{R}^n\times \mathbb{R}^m\to \overline{\mathbb{R}}$ be given by
\[
l(x,y):= -\left(f(x,\cdot)\right)^*(y).\]
Then $l(x,y)= f_0(x)+\mydot{y}{F(x)} -g^*(y)$ for all $(x,y)\in \mathbb{R}^n\times \mathbb{R}^m$.
\end{lemma}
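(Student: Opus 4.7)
The plan is to unfold the definition of the Lagrangian, substitute the specific form of $f$, and then apply a linear change of variables in the inner supremum to recover the conjugate of $g$ directly. Since $f_0(x)$ does not depend on $u$, it will factor out of the supremum immediately, leaving us to analyze the conjugate (in $u$) of $u \mapsto g(F(x)+u)$ for each fixed $x$.

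More concretely, I would first write
\[
l(x,y) = -\left(f(x,\cdot)\right)^*(y) = -\sup_{u\in \mathbb{R}^m}\left\{\mydot{y}{u} - f_0(x) - g(F(x)+u)\right\},
\]
and observe that the constant (in $u$) term $-f_0(x)$ can be pulled out of the supremum to give $l(x,y) = f_0(x) - \sup_{u}\{\mydot{y}{u} - g(F(x)+u)\}$. The key step is then the change of variables $w := F(x)+u$ inside the supremum, under which $\mydot{y}{u} = \mydot{y}{w} - \mydot{y}{F(x)}$ and the constraint $u \in \mathbb{R}^m$ becomes simply $w \in \mathbb{R}^m$ since $F(x)$ is a fixed vector. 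The term $-\mydot{y}{F(x)}$ is constant in $w$ and again factors out, leaving $\sup_{w}\{\mydot{y}{w} - g(w)\} = g^*(y)$ by definition of the conjugate.

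Collecting terms gives $l(x,y) = f_0(x) + \mydot{y}{F(x)} - g^*(y)$, which is the claimed identity. This equality holds pointwise for every $(x,y) \in \mathbb{R}^n \times \mathbb{R}^m$ as an identity in the extended reals, and no convexity, properness, or closedness of $f_0$, $g$, or $F$ is needed because both sides are defined purely through a supremum and a change of variables that is a bijection on $\mathbb{R}^m$. There is essentially no obstacle: the only thing to be mildly careful about is bookkeeping with $\pm\infty$ values, which is handled uniformly since the change of variables does not alter the range of the expression inside the supremum and the additive constants $f_0(x)$ and $-\mydot{y}{F(x)}$ can be pulled out of the supremum in $\overline{\mathbb{R}}$ without ambiguity.
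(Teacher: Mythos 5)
Your proof is correct and follows essentially the same route as the paper's own argument (given for the more general Lemma~3.2, whose restriction to $\operatorname{dom}(F)=\mathbb{R}^n$ is exactly this statement): unfold the conjugate in $u$, pull out $f_0(x)$, substitute $w=F(x)+u$, and extract $\mydot{y}{F(x)}$ to recognize $g^*(y)$. Nothing further is needed.
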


\section{Composite functions}\label{convex:comp:section}

To cover composite functions in the full generality of  \cite{burke2021study} we only need to extend the GNLP framework of Section~\ref{GNLP:sec} to allow $F$ to take values in $\mathbb{R}^m\cup\{+\infty_\bullet\}$ where $+\infty_\bullet$ is an \emph{infinite element} for $\mathbb{R}^m$ as follows.

\begin{definition}\label{firstkdef}
Let $+\infty_\bullet$ be an \emph{infinite element} for $\mathbb{R}^m$. 
For $F:\mathbb{R}^n\to \mathbb{R}^m\cup\{+\infty_\bullet\}$ we let $\operatorname{dom}(F):=\left\{x\in \mathbb{R}^n\,:\, F(x)\neq+\infty_\bullet \right\}$ and $\operatorname{rge}(F)=F\left(\operatorname{dom}(F)\right)$.
\end{definition}

Here, infinite element $+\infty_\bullet$ is intended to represent all values for $F$ not in $\mathbb{R}^m$. For instance, we can redefine any $H:\mathbb{R}^n\to \left(\mathbb{R}\cup\{-\infty, +\infty\}\right)^m$ to a function  $F:\mathbb{R}^n\to \mathbb{R}^m\cup\{+\infty_\bullet\}$ by letting
\[
F(x):=\begin{cases}H(x) &H(x)\in \mathbb{R}^m\\+\infty_\bullet&\text{o.w.}\end{cases}.
\]
In \cref{secondkdef}, $+\infty_\bullet$ will correspond to a \emph{largest element} for a specific ordering.

\cref{simplegnlplagrangianlemma} then extends to the following lemma.

\begin{restatable}{lemma}{GNLPLagrangianLemma}\label{gnlplagrangianlemma}
Let  
$f_0:\mathbb{R}^n\to \overline{\mathbb{R}}$, $F:\mathbb{R}^n\to \mathbb{R}^m\cup\{+\infty_\bullet\}$ and $g:\mathbb{R}^m\to \overline{\mathbb{R}}$  let $f:\mathbb{R}^n\times \mathbb{R}^m\to \overline{\mathbb{R}}$, $\mydot{y}{F}:\mathbb{R}^n\to \overline{\mathbb{R}}$ and $l:\mathbb{R}^n\times \mathbb{R}^m\to \overline{\mathbb{R}}$ be given by
\begin{subequations}
\begin{alignat}{3}
f(x,u)&:=\begin{cases}f_0(x)+g\left(F(x)+u\right)&x\in \operatorname{dom}(F)\\+\infty&\text{o.w.} \end{cases},&\label{extendedcompositedef}\\ 
  \mydot{y}{F}(x)&:=\begin{cases}\mydot{y}{F(x)}&x\in \operatorname{dom}(F)\\+\infty&\text{o.w.}\end{cases},\label{extendedscalarizationdef}\\
l(x,y)&:= -\left(f(x,\cdot)\right)^*(y).
\end{alignat}
\end{subequations}
Then
\begin{subequations}\label{gnlplagrangianlemma:char}
\begin{align}
l(x,y)&= f_0(x)+\mydot{y}{F(x)} -g^*(y),\label{gnlplagrangianlemma:char:a}\\
f^*(v,y)&=\left(f_0 +\mydot{y}{ F}\right)^*(v)+g^*(y).\label{gnlplagrangianlemma:char:b}
\end{align}
\end{subequations}
\end{restatable}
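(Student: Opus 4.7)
The plan is to establish \eqref{gnlplagrangianlemma:char:a} first by a direct change of variables in the conjugate of $f(x,\cdot)$, and then derive \eqref{gnlplagrangianlemma:char:b} from \eqref{gnlplagrangianlemma:char:a} by interchanging the suprema in the definition of $f^*$. Both identities are essentially the substitution $w:=F(x)+u$ once one accepts the definitions \eqref{extendedcompositedef} and \eqref{extendedscalarizationdef}; the Lagrangian piece is \cref{simplegnlplagrangianlemma} with the codomain of $F$ enlarged.

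For \eqref{gnlplagrangianlemma:char:a}, I would split on whether $x\in\operatorname{dom}(F)$. When $x\in\operatorname{dom}(F)$, $F(x)\in\mathbb{R}^m$ and
\[
(f(x,\cdot))^*(y)=\sup_{u\in\mathbb{R}^m}\bigl\{\mydot{y}{u}-f_0(x)-g\bigl(F(x)+u\bigr)\bigr\}.
\]
The affine change of variables $w:=F(x)+u$ (well-defined because $F(x)$ is finite) replaces the supremum over $u$ by a supremum over $w$ and yields
\[
(f(x,\cdot))^*(y)=-f_0(x)-\mydot{y}{F(x)}+g^*(y),
\]
so $l(x,y)=f_0(x)+\mydot{y}{F(x)}-g^*(y)$ as required. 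When $x\notin\operatorname{dom}(F)$, $f(x,\cdot)\equiv+\infty$ by \eqref{extendedcompositedef}, hence $(f(x,\cdot))^*\equiv-\infty$ and $l(x,y)\equiv+\infty$; this agrees with the right-hand side of \eqref{gnlplagrangianlemma:char:a} once $\mydot{y}{F(x)}$ is read via \eqref{extendedscalarizationdef} as $+\infty$.

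For \eqref{gnlplagrangianlemma:char:b}, I would break the joint supremum defining $f^*$ into an outer supremum over $x$ and an inner supremum over $u$, recognize the inner supremum as $-l$, and then substitute \eqref{gnlplagrangianlemma:char:a}:
\[
f^*(v,y)=\sup_{x\in\mathbb{R}^n}\Bigl\{\mydot{v}{x}+\sup_{u\in\mathbb{R}^m}\bigl\{\mydot{y}{u}-f(x,u)\bigr\}\Bigr\}=\sup_{x\in\mathbb{R}^n}\bigl\{\mydot{v}{x}-l(x,y)\bigr\}.
\]
Because $g^*(y)$ does not depend on $x$, it pulls out of the supremum and the remaining expression is exactly $(f_0+\mydot{y}{F})^*(v)$, which gives \eqref{gnlplagrangianlemma:char:b}. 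Note that the outer supremum is automatically restricted to $\operatorname{dom}(F)$ since $f(x,u)=+\infty$ off of $\operatorname{dom}(F)$, and this matches the convention built into \eqref{extendedscalarizationdef}.

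The only real obstacle is bookkeeping of extended-real arithmetic around the infinite element $+\infty_\bullet$ and the values of $g^*$: I need both sides of each asserted equality to reduce to the same element of $\overline{\mathbb{R}}$ in the corner cases $x\notin\operatorname{dom}(F)$, $f_0(x)\in\{\pm\infty\}$, or $g^*(y)\in\{\pm\infty\}$. I would handle this by listing the cases in the two-case split above and checking them against the definitions in \eqref{extendedcompositedef} and \eqref{extendedscalarizationdef}; once that is done, the algebraic heart of the lemma is just the change of variables $w=F(x)+u$.
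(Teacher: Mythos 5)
Your proposal is correct and follows essentially the same route as the paper: split on $x\in\operatorname{dom}(F)$, perform the change of variables $w=F(x)+u$ to get \eqref{gnlplagrangianlemma:char:a}, and then evaluate the joint supremum defining $f^*$ as an iterated supremum (the paper substitutes $u\mapsto u-F(x)$ inside the joint sup rather than explicitly citing \eqref{gnlplagrangianlemma:char:a}, but the computation is identical). The extended-arithmetic corner cases you flag are handled in the paper at the same level of detail you propose, so there is no gap.
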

\pointtoproof{gnlplagrangianlemma:proof}

Finally, applying the results from Section~\ref{perturbationsection} to the function $f$ from \cref{gnlplagrangianlemma} we get the following characterization of the convex conjugate of composite functions. 

\begin{theorem}\label{simplecomposite}
For $f_0:\mathbb{R}^n\to \overline{\mathbb{R}}$, $F:\mathbb{R}^n\to \mathbb{R}^m\cup\{+\infty_\bullet\}$ and $g:\mathbb{R}^m\to \overline{\mathbb{R}}$, let $\mydot{y}{F}:\mathbb{R}^n\to \overline{\mathbb{R}}$ given by \eqref{extendedscalarizationdef},  $\rho:\mathbb{R}^n\to \overline{\mathbb{R}}$ be given by
\begin{equation}\label{rhodef}
    \rho(\bar{v}):=\inf_{y \in \mathbb{R}^m}\left(f_0 +\mydot{y}{ F}\right)^*(\bar{v})+g^*(y),
\end{equation}
and $f_0 + g \circ F:\mathbb{R}^n\to \overline{\mathbb{R}}$ be given by 
\begin{equation}\label{generalcompositef}
 (f_0 + g \circ F)(x):=\begin{cases}f_0(x)+g\left(F(x)\right)&x\in \operatorname{dom}(F)\\+\infty&\text{o.w.} \end{cases}.
\end{equation}
Then, $\rho$ is convex, for any $\bar{v}\in \mathbb{R}^n$
\begin{equation}\label{compositeineq}
    \left(f_0 + g \circ F\right)^*(\bar{v})\leq \operatorname{cl}\rho\left(\bar{v}\right),
\end{equation}
and for any $(\bar{v},\bar{x},\bar{y})\in  \mathbb{R}^n\times \mathbb{R}^n\times \mathbb{R}^m$ 
\begin{subequations}
\label{newcomposite:eqcondone}
\begin{align}
\label{newcomposite:eqcondone:a}
    (f_0+ g \circ F)(\bar{x})+(f_0+g \circ F)^*(\bar{v})&=\mydot{\bar{v}}{\bar{x}}\\
    \label{newcomposite:eqcondone:b}
    (f_0+g \circ F)^*(\bar{v})&=(f_0+ \mydot{\bar{y}}{ F})^*(\bar{v})+g^*(\bar{y})
\end{align}
\end{subequations}
is equivalent to 
\begin{subequations}
\label{newcomposite:eqcondtwo}
\begin{align}g(F(\bar{x}))+g^*(\bar{y})&=\mydot{\bar{y}}{F(\bar{x})}\label{newcomposite:eqcondtwo:a}\\
(f_0+\mydot{\bar{y}}{F})(\bar{x})+(f_0+\mydot{\bar{y}}{ F})^*(\bar{v})&=\mydot{\bar{v}}{\bar{x}}.\label{newcomposite:eqcondtwo:b}
\end{align}
\end{subequations}

Finally, let $f:\mathbb{R}^n\times \mathbb{R}^m\to \overline{\mathbb{R}}$ be given by \eqref{extendedcompositedef} and
\begin{align}
\notag
    U:&=\left\{u\in \mathbb{R}^m\,:\, \inf_{x\in \mathbb{R}^n}f(x,u)<+\infty\right\}\\
    &=\operatorname{dom}(g)-F\left(\operatorname{dom}(f_0)\cap \operatorname{dom}(F)\right).\label{Udef}
\end{align}
Assuming that  $f\in \Gamma_0\left(\mathbb{R}^n\times \mathbb{R}^m\right)$ we have:
\begin{enumerate}
    \item If $0\in U$, then  $\rho\in \Gamma\left(\mathbb{R}^n\right)$.
 \item If $\rho\in \Gamma\left(\mathbb{R}^n\right)$ (e.g. if $0\in U$), then  \eqref{compositeineq} holds at equality.
    \item If $0\in \operatorname{rint}(U)$ or $0\in U$ and $f$ is PWLQ, then  $\rho\in \Gamma_0\left(\mathbb{R}^n\right)$, and the infimum in \eqref{rhodef} is a minimum (possibly with $\rho(\bar{v})=\left(f_0 +\mydot{y}{ F}\right)^*(\bar{v})+g^*(y)=+\infty$ for all $y \in \mathbb{R}^m$).
   
\end{enumerate}

\end{theorem}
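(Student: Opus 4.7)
The plan is to reduce everything to the perturbation framework of \cref{perturbationsection} applied with the function $f$ from \cref{gnlplagrangianlemma}. The pivotal observations are that $f(x,0)=(f_0+g\circ F)(x)$, so $p_{\bar{v}}(0)=-(f_0+g\circ F)^*(\bar{v})$, and that substituting \eqref{gnlplagrangianlemma:char:b} into \eqref{qdef} at $\bar{u}=0$ yields $q_0(\bar{v})=\rho(\bar{v})$. A direct unpacking of $\operatorname{dom}(p_0)$ from \eqref{extendedcompositedef} also gives $U=\operatorname{dom}(p_0)$. With these identifications in hand, the convexity of $\rho$ is immediate from the convexity of $q_0$ asserted in \cref{newGWDOP}, and \eqref{compositeineq} is the specialization of the weak duality bound \eqref{newdualweakGNLPduality} at $\bar{u}=0$.

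For the equivalence of \eqref{newcomposite:eqcondone} and \eqref{newcomposite:eqcondtwo}, I would show that each is equivalent to the corresponding condition in \cref{newGWDOP} evaluated at $\bar{u}=0$, and then invoke the equivalence of \eqref{newGNLP:optcond1} and \eqref{newGNLP:optcond2} supplied by that theorem. Condition \eqref{newcomposite:eqcondone:a} is just the Fenchel equality characterizing $\bar{x}\in P(0,\bar{v})$, while \eqref{newcomposite:eqcondone:b} is equivalent (given \eqref{newcomposite:eqcondone:a}) to $\bar{y}\in Q(0,\bar{v})$ together with the strong-duality identity $p_{\bar{v}}(0)=-q_0(\bar{v})$; the key step here is to sandwich $(f_0+g\circ F)^*(\bar{v})$ between $\rho(\bar{v})\leq f^*(\bar{v},\bar{y})$ (from the definition of $\rho$ as an infimum) and the already-proven upper bound \eqref{compositeineq}, forcing equality throughout. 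Conversely, expanding \eqref{newGNLP:optcond2} at $\bar{u}=0$ via \eqref{gnlplagrangianlemma:char:a} and \eqref{gnlplagrangianlemma:char:b} turns the first equality directly into the Fenchel identity \eqref{newcomposite:eqcondtwo:a} for $g$, and the second equality into the Fenchel identity \eqref{newcomposite:eqcondtwo:b} for $f_0+\mydot{\bar{y}}{F}$.

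The enumerated items then reduce to direct applications of \cref{strongattainmentcoro:label} at $\bar{u}=0$, using $U=\operatorname{dom}(p_0)$. Item (1) is its first bullet, which gives $q_0=\rho$ proper. Item (2) uses that $\rho\in\Gamma$ makes $q_0$ proper, triggering the Dual-asymptotic strong duality clause of \cref{NewConvexStrongDual}: $(f_0+g\circ F)^*(\bar{v})=-p_{\bar{v}}(0)=\operatorname{cl} q_0(\bar{v})=\operatorname{cl}\rho(\bar{v})$. Item (3) combines the second bullet of \cref{strongattainmentcoro:label} (which gives $\rho$ closed, so $\rho\in\Gamma_0$) with the attainment conclusion \eqref{dual:attainment:eq}, which at $\bar{u}=0$ is precisely the existence of $\bar{y}$ with $f^*(\bar{v},\bar{y})=\rho(\bar{v})$ whenever $\rho(\bar{v})\in\mathbb{R}$; the parenthetical case $\rho(\bar{v})=+\infty$ is automatic since every $y$ then trivially attains the infimum.

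The most delicate bookkeeping, and the main obstacle, will be in the equivalence between \eqref{newcomposite:eqcondone} and \eqref{newGNLP:optcond1} at $\bar{u}=0$: one must ensure that the finiteness of $(f_0+g\circ F)^*(\bar{v})$ supplied by \eqref{newcomposite:eqcondone:a} propagates through the sandwich argument to deliver an actual optimizer $\bar{y}\in Q(0,\bar{v})$ (not merely an asymptotically optimal sequence) along with the full strong-duality identity $p_{\bar{v}}(0)=-q_0(\bar{v})$. This dependency is precisely why the weak duality inequality \eqref{compositeineq} must be established before the equivalence step rather than alongside it.
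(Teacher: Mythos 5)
Your proposal is correct and follows essentially the same route as the paper: the identifications $\rho=q_0$, $(f_0+g\circ F)^*(\bar v)=-p_{\bar v}(0)$ and $U=\operatorname{dom}(p_0)$ via Lemma~\ref{gnlplagrangianlemma}, weak duality \eqref{newdualweakGNLPduality} for convexity of $\rho$ and \eqref{compositeineq}, the sandwich $(f_0+g\circ F)^*(\bar v)\leq\rho(\bar v)\leq(f_0+\mydot{\bar{y}}{F})^*(\bar v)+g^*(\bar y)$ to translate \eqref{newcomposite:eqcondone} into \eqref{newGNLP:optcond1}, and Corollary~\ref{strongattainmentcoro:label} for the enumerated items. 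The only (harmless) deviation is that your final worry about producing an optimizer $\bar y\in Q(0,\bar v)$ is moot, since the equivalence is stated pointwise for a given triple $(\bar v,\bar x,\bar y)$, so no existence argument is needed there.
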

\begin{proof}{\textbf{Proof}\hspace*{0.5em}}
First note that by using the characterization of $f^*$ from Lemma~\ref{gnlplagrangianlemma} we have  $\rho\left(\bar{v}\right)=q_0\left(\bar{v}\right)$ and $\left(f_0 + g \circ F\right)^*(\bar{v})=-p_{\bar{v}}(0)$ for all $\bar{v}\in \mathbb{R}^n$ where  $q_{\bar{u}}$ and $p_{\bar{v}}$ are defined in \eqref{pqdef}. Then convexity of $\rho$ and \eqref{compositeineq} follows from Theorem~\ref{newGWDOP} (specifically  \eqref{newdualweakGNLPduality}).

The characterizations from Lemma~\ref{gnlplagrangianlemma} also imply that \eqref{newGNLP:optcond1} is equivalent to  \eqref{newcomposite:eqcondone}  and that \eqref{newGNLP:optcond2} is equivalent to \eqref{newcomposite:eqcondtwo} as follows.

For the equivalence between \eqref{newGNLP:optcond1} and \eqref{newcomposite:eqcondone} note that $\bar{x}\in P\left(0,\bar{v}\right)$ is equivalent to \eqref{newcomposite:eqcondone:a}, $\bar{y}\in Q\left(0,\bar{v}\right)$ is equivalent to $\rho(\bar{v})=\left(f_0 +\mydot{\bar{y}}{ F}\right)^*(\bar{v})+g^*(\bar{y})$, and $p_{\bar{v}}(0)=
-q_{0}(\bar{v})$ is equivalent to $\left(f_0 + g \circ F\right)^*(\bar{v})=\rho(\bar{v})$. The equivalence then follows because \eqref{rhodef} and \eqref{compositeineq} always imply
\[\left(f_0 + g \circ F\right)^*(\bar{v})\leq \rho(\bar{v})\leq \left(f_0 +\mydot{\bar{y}}{ F}\right)^*(\bar{v})+g^*(\bar{y}).\]

The equivalence between  \eqref{newGNLP:optcond2} and \eqref{newcomposite:eqcondtwo} follows by noting that under the characterization from Lemma~\ref{gnlplagrangianlemma}, \eqref{newGNLP:optcond2} becomes
\begin{align*}\mydot{\bar{v}}{\bar{x}}-(f_0+ g \circ F)(\bar{x})&=g^*(\bar{y})-f_0(\bar{x})-\mydot{\bar{y}}{ F}(\bar{x})+\mydot{\bar{v}}{\bar{x}}\\&=g^*(\bar{y})+\left(f_0 +\mydot{\bar{y}}{ F}\right)^*(\bar{v}).\end{align*}

The equivalence between \eqref{newcomposite:eqcondone} and \eqref{newcomposite:eqcondtwo} then follows from the corresponding equivalence between  \eqref{newGNLP:optcond1} and \eqref{newGNLP:optcond2} in Theorem~\ref{newGWDOP}.

The remaining results follow from Corollary~\ref{strongattainmentcoro:label} by noting that 
$U=\operatorname{dom}(p_0)$.

 \Halmos \end{proof}

Next, in  Section~\ref{sub:diff:section} we show how the equivalence between \eqref{newcomposite:eqcondone} and \eqref{newcomposite:eqcondtwo} can be used to deduce a chain rule for the sub-differential of $g\circ F$. 
After that in Section~\ref{my:additive:results:section} how standard results on the sum of two functions can be used to refine both results (e.g. to write $\left(f_0 +\mydot{y}{ F}\right)^*$ in \eqref{rhodef} as a function of $f_0^*$ and $\left(\mydot{y}{ F}\right)^*$).

\subsection{Sub-differential chain rule}\label{sub:diff:section}

Combining the equivalence between \eqref{newcomposite:eqcondone} and \eqref{newcomposite:eqcondtwo} in \cref{simplecomposite} with Fenchel's inequality (e.g. \cite[Proposition 11.3]{rockafellar2009variational}) we also get the sub-differential chain rule for convex composite functions given in \cref{sub:diff:calc:coro} below.

\begin{corollary}\label{sub:diff:calc:coro}
    For $f_0:\mathbb{R}^n\to \overline{\mathbb{R}}$, $F:\mathbb{R}^n\to \mathbb{R}^m\cup\{+\infty_\bullet\}$ and $g:\mathbb{R}^m\to \overline{\mathbb{R}}$, let $\mydot{y}{F}:\mathbb{R}^n\to \overline{\mathbb{R}}$ given by \eqref{extendedscalarizationdef}, 
  $f_0 + g \circ F:\mathbb{R}^n\to \overline{\mathbb{R}}$ be given by \eqref{generalcompositef}, $f:\mathbb{R}^n\times \mathbb{R}^m\to \overline{\mathbb{R}}$ be given by \eqref{extendedcompositedef}, and $U\subseteq \mathbb{R}^m$ be given by \eqref{Udef}.

If  $f\in \Gamma\left(\mathbb{R}^n\times \mathbb{R}^m\right)$, $f(x,u)$ is closed in $u$, and $\operatorname{dom}(F)\neq\emptyset$, then $f_0+g\circ F$ and $g$ are convex, $f_0+\mydot{\bar{y}}{F}$ is convex for any $\bar{y}\in \operatorname{dom}(g^*)$ and for all $\bar{x}\in \operatorname{dom}\left(f_0+g\circ F\right)$ we have 
  \begin{equation}
            \partial \left(f_0+g\circ F\right)\left(\bar{x}\right)\supseteq\bigcup_{\bar{y}\in \partial g(F(\bar{x}))} \partial\left(f_0+\mydot{\bar{y}}{F}\right)(\bar{x})\label{sub:diff:calc:coro:eq}
  \end{equation}  
  In addition, \eqref{sub:diff:calc:coro:eq} holds at equality if $f \in\Gamma_0(\mathbb{R}^n\times \mathbb{R}^m)$ and either $0\in \operatorname{rint}(U)$ or $0\in U$ and $f$ is PWLQ.
\end{corollary}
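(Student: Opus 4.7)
The plan is to translate the corollary into statements about Fenchel equality for the relevant conjugate pairs, then invoke the equivalence between \eqref{newcomposite:eqcondone} and \eqref{newcomposite:eqcondtwo} from \cref{simplecomposite}. Before touching the subdifferential claim, I would first dispatch the convexity assertions. Since $(f_0 + g\circ F)(x) = f(x,0)$ by the definitions \eqref{extendedcompositedef} and \eqref{generalcompositef}, convexity of $f_0 + g\circ F$ is immediate from joint convexity of $f$. For convexity of $g$, I would use that $f$ is proper to pick $\bar{x}\in \operatorname{dom}(f_0)\cap\operatorname{dom}(F)$ with $F(\bar{x})+\bar{u}\in\operatorname{dom}(g)$; then $u\mapsto f(\bar{x},u) = f_0(\bar{x}) + g(F(\bar{x})+u)$ is convex, and a translation yields convexity of $g$. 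For $f_0+\mydot{\bar{y}}{F}$ with $\bar{y}\in \operatorname{dom}(g^*)$, I would apply \cref{NewConvexStrongDual} to conclude that $l(x,y)$ is convex in $x$, and then read off from \eqref{gnlplagrangianlemma:char:a} that $l(x,\bar{y}) = f_0(x)+\mydot{\bar{y}}{F(x)}-g^*(\bar{y})$ differs from $f_0+\mydot{\bar{y}}{F}$ by the finite constant $-g^*(\bar{y})$.

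For the forward inclusion \eqref{sub:diff:calc:coro:eq}, the plan is to fix $\bar{y}\in \partial g(F(\bar{x}))$ and $\bar{v}\in \partial(f_0+\mydot{\bar{y}}{F})(\bar{x})$ and translate these memberships into the Fenchel equalities \eqref{newcomposite:eqcondtwo:a} and \eqref{newcomposite:eqcondtwo:b} respectively (using convexity of $g$ and $f_0+\mydot{\bar{y}}{F}$ from the previous paragraph). The equivalence in \cref{simplecomposite} then yields \eqref{newcomposite:eqcondone:a}, which by Fenchel's inequality for the convex function $f_0+g\circ F$ is equivalent to $\bar{v}\in \partial(f_0+g\circ F)(\bar{x})$.

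For the reverse inclusion, assume $f\in \Gamma_0(\mathbb{R}^n\times \mathbb{R}^m)$ and a qualification condition, and take $\bar{v}\in \partial(f_0+g\circ F)(\bar{x})$. Fenchel equality gives \eqref{newcomposite:eqcondone:a}, with $(f_0+g\circ F)^*(\bar{v})\in\mathbb{R}$. To obtain \eqref{newcomposite:eqcondone:b} I would invoke part~3 of \cref{simplecomposite}, which under the qualification conditions gives $\rho\in\Gamma_0(\mathbb{R}^n)$, attainment of the infimum in \eqref{rhodef}, and (via part~2) the identity $(f_0+g\circ F)^*(\bar{v}) = \rho(\bar{v})$. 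A minimizer $\bar{y}$ of \eqref{rhodef} at $\bar{v}$ therefore produces the decomposition \eqref{newcomposite:eqcondone:b}. Applying the equivalence in \cref{simplecomposite} once more hands back \eqref{newcomposite:eqcondtwo:a} and \eqref{newcomposite:eqcondtwo:b}, and Fenchel equality unpacks these into $\bar{y}\in\partial g(F(\bar{x}))$ and $\bar{v}\in\partial(f_0+\mydot{\bar{y}}{F})(\bar{x})$, completing the equality.

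The only nontrivial step is the reverse inclusion, and the main obstacle is justifying that a minimizer in \eqref{rhodef} actually exists at $\bar{v}$; this is exactly what part~3 of \cref{simplecomposite} was set up to provide, so the qualification condition does all the real work. The other small care point is ensuring that every invocation of Fenchel equality is applied to a convex function (so that subdifferential membership and the equality $h(\bar{x})+h^*(\bar{v})=\mydot{\bar{v}}{\bar{x}}$ are genuinely equivalent), which is why I would isolate the convexity claims as a first step.
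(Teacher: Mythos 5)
Your proposal is correct and follows essentially the same route as the paper: establish the convexity claims, use Fenchel's inequality to translate the three subdifferential memberships into \eqref{newcomposite:eqcondone:a}, \eqref{newcomposite:eqcondtwo:a} and \eqref{newcomposite:eqcondtwo:b}, and then invoke the equivalence between \eqref{newcomposite:eqcondone} and \eqref{newcomposite:eqcondtwo} from \cref{simplecomposite} for the forward inclusion and parts 2--3 of \cref{simplecomposite} (closedness of $\rho$ and attainment of the infimum in \eqref{rhodef}) to produce the $\bar{y}$ needed for the reverse inclusion. Your treatment of the reverse inclusion merely spells out in more detail what the paper compresses into one sentence, so there is no substantive difference.
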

\begin{proof}{\textbf{Proof}\hspace*{0.5em}}
    Convexity of $f_0+g\circ F$ is direct from the joint convexity of $f$ and convexity of $g$ follows from convexity of $f$ and the assumption $\operatorname{dom}(F)\neq\emptyset$. Convexity of $f_0+\mydot{\bar{y}}{F}$ follows by noting that by \cref{NewConvexStrongDual} and \cref{gnlplagrangianlemma} we have that if $f(x,u)$ is convex in $(x, u)$ and closed in $u$, then 
    $l(x,y)= f_0(x)+\mydot{y}{F(x)} -g^*(y)$
    is convex in $x$.

    Now by Fenchel's inequality (e.g. \cite[Proposition 11.3]{rockafellar2009variational}) we have that
    \begin{itemize}
        \item \eqref{newcomposite:eqcondone:a} is equivalent to $\bar{v}\in  \partial \left(f_0+g\circ F\right)\left(\bar{x}\right)$,
        \item \eqref{newcomposite:eqcondtwo:a} is equivalent to $\bar{y}\in \partial g(F(\bar{x}))$, and
        \item \eqref{newcomposite:eqcondtwo:b} is equivalent to $
        \bar{v}\in\partial\left(f_0+\mydot{\bar{y}}{F}\right)(\bar{x})$.
    \end{itemize}
    Then, \eqref{sub:diff:calc:coro:eq} follows because \eqref{newcomposite:eqcondtwo:a} and \eqref{newcomposite:eqcondtwo:b} imply \eqref{newcomposite:eqcondone:a} by \cref{simplecomposite}.

    Under the additional assumptions \cref{simplecomposite} implies that for any $\bar{x},\bar{v}\in \mathbb{R}^n$ that satisfies \eqref{newcomposite:eqcondone:a} there exists $\bar{y}\in \mathbb{R}^m$ that satisfies \eqref{newcomposite:eqcondone:b}. Equality in \eqref{sub:diff:calc:coro:eq} then follows from the equivalence between \eqref{newcomposite:eqcondone} and \eqref{newcomposite:eqcondtwo}.
 \Halmos \end{proof}

\subsection{Combining with standard additive results}\label{my:additive:results:section}

We can apply standard results for $\left(f_0 +\mydot{y}{ F}\right)^*$ and $\partial\left(f_0+\mydot{\bar{y}}{F}\right)(\bar{x})$ to further refine  \cref{simplecomposite}  and \cref{sub:diff:calc:coro} in \cref{simplecompositeadditive} below.  Such standard results require $\mydot{\bar{y}}{F}\in \Gamma(\mathbb{R}^n)$, which the following lemma shows can be deduced from properties of the portion of $f$ that does not depend on $f_0$.

\begin{lemma}\label{simplecompositeadditive:lemma}
    For $f_0:\mathbb{R}^n\to \overline{\mathbb{R}}$, $F:\mathbb{R}^n\to \mathbb{R}^m\cup\{+\infty_\bullet\}$ and $g:\mathbb{R}^m\to \overline{\mathbb{R}}$, let $\mydot{y}{F}:\mathbb{R}^n\to \overline{\mathbb{R}}$ given by \eqref{extendedscalarizationdef}, 
   $f:\mathbb{R}^n\times \mathbb{R}^m\to \overline{\mathbb{R}}$ be given by \eqref{extendedcompositedef} and $h:\mathbb{R}^n\times \mathbb{R}^m\to \overline{\mathbb{R}}$ be given by
   \begin{equation}
          h(x,u):=\begin{cases}g\left(F(x)+u\right)&x\in \operatorname{dom}(F)\\+\infty&\text{o.w.} \end{cases}\label{extendedcompositedefpartial}
     \end{equation}
so that $f=f_0+h$ (or equivalently so that $h=f$ with $f_0\equiv 0$).

If $h\in\Gamma\left(\mathbb{R}^n\times \mathbb{R}^m\right)$ and $h(x,u)$ is closed in $u$,  then  $\mydot{\bar{y}}{F}\in \Gamma(\mathbb{R}^n)$ for all $\bar{y}\in \operatorname{dom}(g^*)$.
\end{lemma}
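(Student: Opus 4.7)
The plan is to realize $\mydot{\bar{y}}{F}$ as a finite translate of the partial Lagrangian associated with $h$, and then read convexity off of \cref{NewConvexStrongDual}.

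First I would apply \cref{gnlplagrangianlemma} in the special case $f_0 \equiv 0$, so that the ``$f$'' of that lemma coincides with our $h$. The Lagrangian of $h$ is then
\[l(x,y) := -\bigl(h(x,\cdot)\bigr)^*(y) = \mydot{y}{F}(x) - g^*(y) \quad \forall (x,y) \in \mathbb{R}^n \times \mathbb{R}^m,\]
where the identity is read with the convention $+\infty - c = +\infty$ so that it is also correct for $x \notin \operatorname{dom}(F)$ (both sides equal $+\infty$, since $h(x,\cdot) \equiv +\infty$ there and $\mydot{y}{F}(x) = +\infty$ by \eqref{extendedscalarizationdef}). Fixing any $\bar{y} \in \operatorname{dom}(g^*)$ gives $g^*(\bar{y}) \in \mathbb{R}$, so
\[\mydot{\bar{y}}{F}(x) = l(x,\bar{y}) + g^*(\bar{y}) \quad \forall x \in \mathbb{R}^n,\]
which exhibits $\mydot{\bar{y}}{F}$ as a finite translate of $l(\cdot,\bar{y})$.

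Next, since by hypothesis $h \in \Gamma(\mathbb{R}^n \times \mathbb{R}^m)$ and $h(x,u)$ is closed in $u$, the first part of \cref{NewConvexStrongDual} (applied with $h$ in the role of $f$) yields that $l(x,y)$ is convex in $x$ and concave in $y$. In particular $l(\cdot,\bar{y})$ is convex, and adding the finite constant $g^*(\bar{y})$ preserves convexity; hence $\mydot{\bar{y}}{F}$ is convex.

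Finally I would verify propriety. By \eqref{extendedscalarizationdef}, $\mydot{\bar{y}}{F}(x) = \langle \bar{y}, F(x)\rangle \in \mathbb{R}$ whenever $x \in \operatorname{dom}(F)$ (because $F(x) \in \mathbb{R}^m$ there) and $\mydot{\bar{y}}{F}(x) = +\infty$ otherwise, so $\mydot{\bar{y}}{F}$ never attains the value $-\infty$. Properness of $h$ gives $\operatorname{dom}(h) \neq \emptyset$, and the inclusion $\operatorname{dom}(h) \subseteq \operatorname{dom}(F) \times \mathbb{R}^m$ then forces $\operatorname{dom}(F) \neq \emptyset$, so $\mydot{\bar{y}}{F} \not\equiv +\infty$. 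The only delicate point is the bookkeeping around the infinite element $+\infty_\bullet$ and the convention in the Lagrangian identity for $x \notin \operatorname{dom}(F)$; beyond this, the proof is a direct specialization of \cref{gnlplagrangianlemma} and \cref{NewConvexStrongDual}.
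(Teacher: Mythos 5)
Your proof is correct and follows essentially the same route as the paper's: invoke \cref{gnlplagrangianlemma} with $f_0\equiv 0$ to identify $\mydot{\bar{y}}{F}$ with $l(\cdot,\bar{y})+g^*(\bar{y})$, use the first part of \cref{NewConvexStrongDual} for convexity of $l$ in $x$, and check properness via $\operatorname{dom}(F)\neq\emptyset$. The only elision is that $\bar{y}\in\operatorname{dom}(g^*)$ implies $g^*(\bar{y})\in\mathbb{R}$ requires noting that properness of $h$ forces $g\not\equiv+\infty$ and hence $g^*>-\infty$ everywhere, a one-line point the paper spells out explicitly.
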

\begin{proof}{\textbf{Proof}\hspace*{0.5em}}
By \cref{NewConvexStrongDual} and \cref{gnlplagrangianlemma} (applied to $f$ with $f_0\equiv 0$) we have that if $h(x,u)$ is convex in $(x, u)$ and closed in $u$, then 
    $l(x,y)= \mydot{y}{F(x)} -g^*(y)$
    is convex in $x$. In addition, $h\in\Gamma\left(\mathbb{R}^n\times \mathbb{R}^m\right)$ implies $g\not\equiv +\infty$ and hence $g^*(y)>-\infty$ for all $y\in \mathbb{R}^m$. Then $\operatorname{dom}(g^*)=\{y\in \mathbb{R}^m\,:\, g^*(y)\in \mathbb{R}\}$. We also have that $h\in\Gamma\left(\mathbb{R}^n\times \mathbb{R}^m\right)$ implies  $\operatorname{dom}(F)\neq\emptyset$. The result follows by noting that by its definition in  \eqref{extendedscalarizationdef}, $\mydot{\bar{y}}{F}$ is proper if and only if $\operatorname{dom}(F)\neq\emptyset$.
 \Halmos \end{proof}

\begin{proposition}\label{simplecompositeadditive}
    For $f_0:\mathbb{R}^n\to \overline{\mathbb{R}}$, $F:\mathbb{R}^n\to \mathbb{R}^m\cup\{+\infty_\bullet\}$ and $g:\mathbb{R}^m\to \overline{\mathbb{R}}$, let $\mydot{y}{F}:\mathbb{R}^n\to \overline{\mathbb{R}}$ given by \eqref{extendedscalarizationdef}, 
  $f_0 + g \circ F:\mathbb{R}^n\to \overline{\mathbb{R}}$ be given by \eqref{generalcompositef}, $f:\mathbb{R}^n\times \mathbb{R}^m\to \overline{\mathbb{R}}$ be given by \eqref{extendedcompositedef}, $h:\mathbb{R}^n\times \mathbb{R}^m\to \overline{\mathbb{R}}$  be given by \eqref{extendedcompositedefpartial}, $U\subseteq \mathbb{R}^m$ be given by \eqref{Udef}, $\rho:\mathbb{R}^n\to \overline{\mathbb{R}}$ be given by \eqref{rhodef} and $\tilde{\rho}:\mathbb{R}^n\to \overline{\mathbb{R}}$ be given by
\begin{equation}\label{rhotildedef}
    \tilde{\rho}(\bar{v}):=\inf_{y \in \mathbb{R}^m,\;w\in \mathbb{R}^n}f_0^*({w})+\left(\mydot{y}{ F}\right)^*(\bar{v}-w)+g^*(y),
\end{equation}

If $f_0\in \Gamma(\mathbb{R}^n)$,  $f,h\in \Gamma\left(\mathbb{R}^n\times \mathbb{R}^m\right)$, $h(x,u)$ is closed in $u$, 
then $f_0+g\circ F$ is convex, and $f_0+\mydot{\bar{y}}{F}, \mydot{\bar{y}}{F}\in \Gamma(\mathbb{R}^n)$ and for all $\bar{y}\in \operatorname{dom}(g^*)$. In addition,  for all $\bar{v}\in \mathbb{R}^n$ we have 
\begin{equation}\label{simplecompositeadditive:conj}
    \left(f_0 + g \circ F\right)^*(\bar{v})\leq \rho\left(\bar{v}\right)\leq \tilde{\rho}\left(\bar{v}\right),
\end{equation}
 and for all $\bar{x}\in \operatorname{dom}\left(f_0+g\circ F\right)$ we have 
   \begin{align}
      \partial \left(f_0+g\circ F\right)\left(\bar{x}\right)&\supseteq\bigcup_{\bar{y}\in \partial g(F(\bar{x}))} \partial\left(f_0+\mydot{\bar{y}}{F}\right)(\bar{x})\notag\\
      &\supseteq \partial f_0(\bar{x})+\bigcup_{\bar{y}\in \partial g(F(\bar{x}))} \partial\mydot{\bar{y}}{F}(\bar{x}).\label{simplecompositeadditive:subdif}
  \end{align}
  In addition, equality holds throughout \eqref{simplecompositeadditive:conj} and \eqref{simplecompositeadditive:subdif} if additionally $f \in\Gamma_0(\mathbb{R}^n\times \mathbb{R}^m)$,
  \begin{equation}\label{simplecompositeadditive:inf:conv:condion}
    \operatorname{rint}\left(\operatorname{dom}(f_0)\right)\cap\operatorname{rint}\left(\operatorname{dom}(F)\right)\neq \emptyset
\end{equation}
  and either $0\in \operatorname{rint}(U)$ or $0\in U$ and $f$ is PWLQ.
\end{proposition}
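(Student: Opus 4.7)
The plan is to reduce the proposition to \cref{simplecomposite}, \cref{sub:diff:calc:coro}, and \cref{simplecompositeadditive:lemma}, combined with the classical Fenchel conjugate (infimal convolution) and Moreau--Rockafellar subdifferential sum rules applied to the two-term sum $f_0+\mydot{\bar{y}}{F}$. I would first dispose of the well-behavedness claims. Convexity of $f_0+g\circ F$ is already contained in the opening line of \cref{sub:diff:calc:coro} (it follows from joint convexity of $f$). \cref{simplecompositeadditive:lemma} applied to $h$ yields $\mydot{\bar{y}}{F}\in\Gamma(\mathbb{R}^n)$ for every $\bar{y}\in\operatorname{dom}(g^*)$. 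That $f_0+\mydot{\bar{y}}{F}\in\Gamma(\mathbb{R}^n)$ is then immediate: both summands are convex and nowhere $-\infty$, and properness of $f$ forces $\operatorname{dom}(f_0)\cap\operatorname{dom}(F)\neq\emptyset$, so the sum is not identically $+\infty$.

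Next I address the two chains. The first inequality in \eqref{simplecompositeadditive:conj} is \eqref{compositeineq} of \cref{simplecomposite} (bounded above by $\operatorname{cl}\rho\leq\rho$), and the first inclusion in \eqref{simplecompositeadditive:subdif} is the inclusion in \cref{sub:diff:calc:coro}. For the second inequality/inclusion I would invoke the \emph{always-valid} Fenchel--Young bound
\[
(f_0+\mydot{y}{F})^*(\bar{v})\;\leq\;\inf_{w\in\mathbb{R}^n}\bigl\{f_0^*(w)+(\mydot{y}{F})^*(\bar{v}-w)\bigr\},
\]
and the always-valid subgradient-sum inclusion $\partial f_0(\bar{x})+\partial\mydot{\bar{y}}{F}(\bar{x})\subseteq\partial(f_0+\mydot{\bar{y}}{F})(\bar{x})$; taking $\inf_y$ (resp.\ $\bigcup_{\bar{y}\in\partial g(F(\bar{x}))}$) after adding $g^*(y)$ gives the desired comparisons with $\tilde{\rho}(\bar{v})$ and with $\partial f_0(\bar{x})+\bigcup_{\bar{y}}\partial\mydot{\bar{y}}{F}(\bar{x})$.

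Finally, I would upgrade to equality under the extra hypotheses. The outer layer is handled already: \cref{simplecomposite} turns \eqref{compositeineq} into equality (item 2, since $0\in U$ implies $\rho\in\Gamma(\mathbb{R}^n)$) and \cref{sub:diff:calc:coro} turns its inclusion into equality, while \cref{simplecomposite}(3) provides attainment of the infimum defining $\rho(\bar{v})$ at some $\bar{y}\in\operatorname{dom}(g^*)$. For the inner layer, observe that $\operatorname{dom}(\mydot{\bar{y}}{F})=\operatorname{dom}(F)$ for every $\bar{y}\in\operatorname{dom}(g^*)$, so \eqref{simplecompositeadditive:inf:conv:condion} is exactly the Moreau--Rockafellar qualification for the pair $(f_0,\mydot{\bar{y}}{F})$; the classical theorem then gives both $(f_0+\mydot{\bar{y}}{F})^*(\bar{v})=\min_{w}\{f_0^*(w)+(\mydot{\bar{y}}{F})^*(\bar{v}-w)\}$ and $\partial(f_0+\mydot{\bar{y}}{F})(\bar{x})=\partial f_0(\bar{x})+\partial\mydot{\bar{y}}{F}(\bar{x})$. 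Chaining the two layers of equalities produces equality throughout \eqref{simplecompositeadditive:conj} and \eqref{simplecompositeadditive:subdif}, and the two attainment statements combine to attain the infimum in $\tilde{\rho}$.

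The main obstacle is just careful bookkeeping between the outer (composite) and inner (additive) layers, most notably verifying that the $\bar{y}$ that attains the outer infimum lies in $\operatorname{dom}(g^*)$ so that \cref{simplecompositeadditive:lemma} applies and the inner Moreau--Rockafellar step is legal. I also want to stress that the PWLQ alternative requires no additional work at the inner level: the PWLQ hypothesis is consumed only by \cref{simplecomposite} to obtain the outer equality and attainment, while the inner sum rules rely solely on the relative-interior condition \eqref{simplecompositeadditive:inf:conv:condion}, independent of $\bar{y}$.
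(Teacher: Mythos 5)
Your proposal is correct and follows essentially the same route as the paper's proof: the well-behavedness claims via \cref{simplecompositeadditive:lemma} and properness of $f$, the outer layer of \eqref{simplecompositeadditive:conj} and \eqref{simplecompositeadditive:subdif} from \cref{simplecomposite} and \cref{sub:diff:calc:coro}, and the inner layer from the classical conjugate-of-sum and subdifferential-sum results (the paper cites \cite[Theorems 16.4 and 23.8]{rockafellar2015convex}, which are exactly your Fenchel/Moreau--Rockafellar steps under the qualification \eqref{simplecompositeadditive:inf:conv:condion}). Your added care about the attaining $\bar{y}$ lying in $\operatorname{dom}(g^*)$ matches the paper's remark that $\bar{y}\in\partial g(F(\bar{x}))$ implies $\bar{y}\in\operatorname{dom}(g^*)$.
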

\begin{proof}{\textbf{Proof}\hspace*{0.5em}}
  Convexity of $f_0+g\circ F$ is direct from $f\in \Gamma\left(\mathbb{R}^n\times \mathbb{R}^m\right)$. \cref{simplecompositeadditive:lemma} implies $\mydot{\bar{y}}{F}\in \Gamma(\mathbb{R}^n)$ for all $\bar{y}\in \operatorname{dom}(g^*)$. $f\in \Gamma\left(\mathbb{R}^n\times \mathbb{R}^m\right)$ also implies  $\operatorname{dom}(f_0)\cap\operatorname{dom}(F)\neq\emptyset$. Then $f_0\in \Gamma(\mathbb{R}^n)$ and $\mydot{\bar{y}}{F}\in \Gamma(\mathbb{R}^n)$ for all $\bar{y}\in \operatorname{dom}(g^*)$ implies $f_0+\mydot{\bar{y}}{F}\in \Gamma(\mathbb{R}^n)$ for all $\bar{y}\in \operatorname{dom}(g^*)$.

  The first inequality in \eqref{simplecompositeadditive:conj} and its equality under the additional assumptions follows from  \cref{simplecomposite}. The second inequality in \eqref{simplecompositeadditive:conj} and its equality under the additional assumptions follows from  \cite[Theorem 16.4]{rockafellar2015convex}, which is applicable because $f_0, \mydot{\bar{y}}{F}\in \Gamma(\mathbb{R}^n)$ for all $\bar{y}\in \operatorname{dom}(g^*)$.

 The first inclusion in  \eqref{simplecompositeadditive:subdif} and its equality under the additional assumptions follows from  \cref{sub:diff:calc:coro}. The second inclusion in  \eqref{simplecompositeadditive:subdif} and its equality under the additional assumptions follows from \cite[Theorem 23.8]{rockafellar2015convex}, which is also applicable because $f_0, \mydot{\bar{y}}{F}\in \Gamma(\mathbb{R}^n)$ for all $\bar{y}\in \operatorname{dom}(g^*)$ and $\bar{y}\in \partial g(F(\bar{x}))$ implies $\bar{y}\in \operatorname{dom}(g^*)$.
 \Halmos \end{proof}

\section{Connection of Theorem~\ref{simplecomposite} to K-convexity}\label{connection:section}

A standard sufficient condition for convexity of $g\circ F$ is component-wise convexity of $F$ and component-wise monotonicity of $g$ (e.g. \cite{hiriart2006note}). Such condition can be further generalized through the following notions of convexity and monotonicity with regards to a closed convex cone $K$ \cite{burke2021study,gissler2023note, pennanen1999graph}.

\begin{definition}\label{secondkdef}
Given a closed convex cone $K\subseteq \mathbb{R}^m$ we define the ordering 
\[x^1\leq_K x^2\quad\Leftrightarrow\quad x^2-x^1\in K\quad \forall x^1,x^2 \in \mathbb{R}^m.\]
We also extend the ordering to consider the infinite element $+\infty_\bullet$ (cf. \cref{firstkdef}) as its \emph{largest element} such that
\[x\leq_K +\infty_\bullet \quad\forall x\in \mathbb{R}^m.\]
\end{definition}
 
\begin{definition}
Let $K\subseteq \mathbb{R}^m$ be a closed convex cone and  $+\infty_\bullet$ be a \emph{largest element} with respect to $K$ (cf. Definition~\ref{secondkdef}).

We say  $F:\mathbb{R}^n\to\mathbb{R}^m\cup\{+\infty_\bullet\}$ is \emph{$K$-convex} if and only if
\[K\text{-}\operatorname{epi}(F):=\left\{(x,z)\in \operatorname{dom}(F)\times \mathbb{R}^m\,:\,  F(x)\leq_K z \right\}\]
is convex.

We say $g:\mathbb{R}^m\to\overline{\mathbb{R}}$ is \emph{$K$-increasing} if and only if
\[g(w)\leq g(w+k)\quad \forall k\in K,\;w\in \mathbb{R}^m,\]
and is \emph{$K$-increasing restricted to $\operatorname{rge}(F)$} if and only if
\[g(w)\leq g(w+k)\quad \forall k\in K,\;w\in \operatorname{rge}(F).\]

Finally, we let 
\begin{align*}
    \mathcal{K}_F&:=\left\{K\subseteq \mathbb{R}^m\,:\, \text{$K$ is a closed convex cone and $F$ is $K$-convex}\right\}\\
    \mathcal{K}_g&:=\left\{K\subseteq \mathbb{R}^m\,:\, \text{$K$ is a closed convex cone and $g$ is $K$-increasing}\right\}\\
    \mathcal{K}_{g,\operatorname{rng}(F)}&:=\left\{K\subseteq \mathbb{R}^m\,:\, 
    \begin{aligned}
    \text{$K$ is a closed convex cone and}\\
    \text{$g$ is $K$-increasing restricted to $\operatorname{rge}(F)$}\end{aligned}\right\}
\end{align*}
\end{definition}

\begin{restatable}{proposition}{allconvexityprop}\label{allconvexityprop:label}
Let $g:\mathbb{R}^m\to \overline{\mathbb{R}}$, $F:\mathbb{R}^n\to\mathbb{R}^m\cup\{+\infty_\bullet\}$, $g\circ F:\mathbb{R}^n\to\overline{\mathbb{R}}$ be given by
\begin{equation}\label{goFdef}
g\circ F(x):=\begin{cases}g\left(F(x)\right)&x\in\operatorname{dom}(F)\\+\infty&\text{o.w.} \end{cases},
\end{equation}
and $f_0:\mathbb{R}^n\to \overline{\mathbb{R}}$ be such that $f_0\equiv 0$ so that $f:\mathbb{R}^n\times\mathbb{R}^m\to \overline{\mathbb{R}}$ given by \eqref{extendedcompositedef} becomes
\begin{equation}\label{nofzerogeneralcompositef}
f(x,u):=\begin{cases}g\left(F(x)+u\right)&x\in\operatorname{dom}(F)\\+\infty&\text{o.w.} \end{cases}
\end{equation}
and $g\circ F(x)=f(x,0)$ for all $x\in \mathbb{R}^n$.

If  $g\in\Gamma(\mathbb{R}^m)$, then 
\begin{subequations}
    \begin{alignat}{5}
        \mathcal{K}_F&\cap\mathcal{K}_{g,\operatorname{rng}(F)}&&\neq \emptyset&&\Rightarrow \text{$g\circ F$ is convex}\label{weakallconvexitypropequiv}\\
        \mathcal{K}_F&\cap\mathcal{K}_{g}&&\neq \emptyset&&\Rightarrow \text{$f$ is convex}\label{allconvexitypropequiv}
    \end{alignat}
\end{subequations}
If  $g\in\Gamma_0(\mathbb{R}^m)$  and $\operatorname{dom}(g)\cap\operatorname{rge}(F) \neq \emptyset$, then \eqref{allconvexitypropequiv} is an equivalence.
\end{restatable}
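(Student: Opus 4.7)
I would prove the three implications in turn. For \eqref{weakallconvexitypropequiv}, fix $K\in\mathcal{K}_F\cap\mathcal{K}_{g,\operatorname{rng}(F)}$, $x^1,x^2\in\operatorname{dom}(F)$, $\lambda\in[0,1]$, and $\bar x:=\lambda x^1+(1-\lambda)x^2$. Convexity of $K\text{-}\operatorname{epi}(F)$ applied to $(x^i,F(x^i))\in K\text{-}\operatorname{epi}(F)$ yields $\bar x\in\operatorname{dom}(F)$ and $d:=\lambda F(x^1)+(1-\lambda)F(x^2)-F(\bar x)\in K$. Since $F(\bar x)\in\operatorname{rge}(F)$, $K$-increasingness of $g$ restricted to $\operatorname{rge}(F)$ gives $g(F(\bar x))\leq g(F(\bar x)+d)=g(\lambda F(x^1)+(1-\lambda)F(x^2))$, which convexity of $g$ bounds by $\lambda g(F(x^1))+(1-\lambda)g(F(x^2))$. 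For the forward direction of \eqref{allconvexitypropequiv} the same argument works after noting $\lambda(F(x^1)+u^1)+(1-\lambda)(F(x^2)+u^2)=F(\bar x)+\bar u+d$ and using $K$-increasingness of $g$ on \emph{all} of $\mathbb{R}^m$ to shift $g(F(\bar x)+\bar u)$ upward.

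For the converse of \eqref{allconvexitypropequiv}, I would take the candidate closed convex cone
\[
K:=\{k\in\mathbb{R}^m\,:\,g(w-k)\leq g(w)\text{ for all }w\in\mathbb{R}^m\}.
\]
Closedness under addition (via $g(w-(k^1+k^2))=g((w-k^1)-k^2)\leq g(w-k^1)\leq g(w)$) and under convex combinations (via $g(w-\lambda k^1-(1-\lambda)k^2)\leq \lambda g(w-k^1)+(1-\lambda)g(w-k^2)\leq g(w)$ using convexity of $g$) together make $K$ a convex cone; topological closedness uses lower semicontinuity of $g\in\Gamma_0$. The substitution $w':=w-k$ shows $g$ is $K$-increasing on all of $\mathbb{R}^m$. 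For $K$-convexity of $F$, fix $x^1,x^2\in\operatorname{dom}(F)$ and $\lambda\in[0,1]$, pick any \emph{synchronizing} $w\in\operatorname{dom}(g)$ (nonempty since $g$ is proper), and set $u^i:=w-F(x^i)$ so that $f(x^i,u^i)=g(w)<+\infty$. Joint convexity of $f$ then forces $f(\bar x,\bar u)\leq g(w)<+\infty$, which both yields $\bar x\in\operatorname{dom}(F)$ and, after computing $F(\bar x)+\bar u=w-d$ for $d:=\lambda F(x^1)+(1-\lambda)F(x^2)-F(\bar x)$, gives $g(w-d)\leq g(w)$. Since the same inequality is trivial when $w\notin\operatorname{dom}(g)$, we conclude $d\in K$, and the standard decomposition $\lambda z^1+(1-\lambda)z^2-F(\bar x)=\lambda(z^1-F(x^1))+(1-\lambda)(z^2-F(x^2))+d$ then exhibits $K\text{-}\operatorname{epi}(F)$ as convex.

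The delicate step is extracting a $K$-increasing-type inequality for $g$ alone from the joint convexity of $f$; the key device is the synchronization $u^i:=w-F(x^i)$, which collapses the right-hand side of the $f$-convexity inequality to the single term $g(w)$. Without this choice, one only gets mixed bounds that do not cleanly separate into monotonicity of $g$ and $K$-convexity of $F$. The hypothesis $\operatorname{dom}(g)\cap\operatorname{rge}(F)\neq\emptyset$ is not strictly needed for the synchronization itself (which only uses $\operatorname{dom}(g)\neq\emptyset$) and instead serves to rule out the degenerate situation where $g\circ F\equiv +\infty$.
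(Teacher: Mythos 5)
Your proof is correct, but it takes a genuinely different route from the paper's, most notably for the converse of \eqref{allconvexitypropequiv}. For the forward implications the paper cites \cite[Proposition 1.b]{burke2021study} for \eqref{weakallconvexitypropequiv} and then obtains \eqref{allconvexitypropequiv} by observing that $H(x,u):=F(x)+u$ is $K$-convex with $\operatorname{rge}(H)=\mathbb{R}^m$, so that $\mathcal{K}_g=\mathcal{K}_{g,\operatorname{rge}(H)}$ and the second implication reduces to the first; your direct epigraph computation with $d:=\lambda F(x^1)+(1-\lambda)F(x^2)-F(\bar x)\in K$ proves both from scratch and is essentially the same idea made self-contained. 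The real divergence is in the converse: the paper runs a dual argument, using \cite[Proposition 11.48]{rockafellar2009variational} together with Lemma~\ref{gnlplagrangianlemma} to show that convexity of $f$ is equivalent to convexity of the scalarizations $\mydot{y}{F}$ for all $y\in\operatorname{dom}(g^*)$, then passes to $\operatorname{cl}\operatorname{cone}\operatorname{dom}(g^*)=(\operatorname{hzn}(g))^\circ$ and invokes \cite[Theorem 3]{burke2021study} to conclude that $F$ is $(-\operatorname{hzn}(g))$-convex. Your argument is purely primal: you construct the same witness cone directly (your $K$ is exactly $-\operatorname{hzn}(g)$, since the defining inequality is vacuous off $\operatorname{dom}(g)$), and the synchronization $u^i:=w-F(x^i)$ is a clean device for collapsing the right-hand side of the convexity inequality for $f$ to the single value $g(w)$, yielding both $\bar x\in\operatorname{dom}(F)$ and $g(w-d)\leq g(w)$ for every $w\in\operatorname{dom}(g)$. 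What the elementary route buys is self-containment (no dualizing-parametrization machinery, no external citations) and the observation, which you correctly note, that the hypothesis $\operatorname{dom}(g)\cap\operatorname{rge}(F)\neq\emptyset$ can be weakened to properness of $f$ for the converse. What the paper's route buys is the intermediate equivalence between convexity of $f$ and convexity of all scalarizations $\mydot{y}{F}$ over $\operatorname{dom}(g^*)$, which is reused elsewhere (e.g.\ in \cref{sub:diff:calc:coro} and \cref{simplecompositeadditive:lemma}) and would have to be reproved separately under your approach. One small point worth making explicit in a final write-up: when verifying that your $K$ is a cone, you should note $0\in K$ and that closure under addition plus convex combinations with $0$ yields closure under all nonnegative scalings; and in the forward implications you should dispose of the trivial cases $x^i\notin\operatorname{dom}(F)$ or $g(F(x^i))=+\infty$, which is routine since properness of $g$ rules out the value $-\infty$.
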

\pointtoproof{allconvexityprop:proof}

As illustrated in \cite[Example 58]{gissler2023note}, convexity of $g\circ F$ can be deduced through $\mathcal{K}_F\cap\mathcal{K}_{g,\operatorname{rng}(F)}\neq \emptyset$ for cases in which $\mathcal{K}_F\cap\mathcal{K}_{g}\neq \emptyset$ does not hold. This is not surprising given that the latter condition implies convexity of $f$, which is stronger than just convexity of $g\circ F$. Based on condition $\mathcal{K}_F\cap\mathcal{K}_{g,\operatorname{rng}(F)}\neq \emptyset$,   \cite{burke2021study} also studies characterizations of $(g\circ F)^*$ as in Theorem~\ref{simplecomposite}. The first version of this characterization is \cite[Theorem 4]{burke2021study} that focuses on inequality \eqref{compositeineq} from Theorem~\ref{simplecomposite} when $f_0\equiv 0$. In this setting, we can restate \cite[Theorem 4]{burke2021study} as follows.

\begin{theorem}[{\cite[Theorem 4]{burke2021study}}]\label{burketheo}
Let $K\subseteq \mathbb{R}^m$ be a closed convex cone, $g:\mathbb{R}^m\to \overline{\mathbb{R}}$, $F:\mathbb{R}^n\to \mathbb{R}^m\cup\{+\infty_\bullet\}$, $\mydot{y}{F}:\mathbb{R}^n\to \overline{\mathbb{R}}$ be given by \eqref{extendedscalarizationdef}, $g \circ F:\mathbb{R}^n\to \overline{\mathbb{R}}$ be given by \eqref{goFdef} and $\eta:\mathbb{R}^n\to \overline{\mathbb{R}}$ be given by
\begin{equation}\label{etadef}
    \eta(v):=\inf_{y \in-K^\circ}\left(\mydot{y}{ F}\right)^*(v)+g^*(y),
\end{equation}

If  $g\in\Gamma(\mathbb{R}^m)$,  and $K\in \mathcal{K}_F\cap\mathcal{K}_{g,\operatorname{rng}(F)}$, then for any $\bar{v}\in \mathbb{R}^n$
\begin{equation}\label{burke:compositeineq}
    \left(g \circ F\right)^*(\bar{v})\leq \operatorname{cl}\eta\left(\bar{v}\right).
\end{equation}

If  $g\in\Gamma_0(\mathbb{R}^m)$,  $K\in \mathcal{K}_F\cap\mathcal{K}_{g}$, $
   \operatorname{dom}(g)\cap  \operatorname{rge}(F)\neq \emptyset$, and $
    \mydot{y}{F}\in \Gamma_0(\mathbb{R}^n)$   for all $y\in -K^\circ$, then \eqref{burke:compositeineq} holds at equality.

Finally, if $g\in\Gamma(\mathbb{R}^m)$,  $K\in \mathcal{K}_F\cap\mathcal{K}_{g,\operatorname{rng}(F)}$ and
\begin{equation}\label{burke:qc}
    \operatorname{rint}\left(\operatorname{dom}(g)-K\right)\cap F\left(\operatorname{rint}\left(\operatorname{dom}(F)\right)\right) \neq \emptyset,
\end{equation}
then $\eta\in\Gamma_0(\mathbb{R}^n)$ and the infimum in \eqref{etadef} is a minimum (possibly with $\eta(v)=\left(\mydot{y}{ F}\right)^*(v)+g^*(y)=+\infty$ for all $y \in-K^\circ$).
\end{theorem}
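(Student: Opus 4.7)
The plan is to specialize Theorem~\ref{simplecomposite} to $f_0\equiv 0$ and connect Burke's $K$-convexity-based quantity $\eta$ from \eqref{etadef} to the paper's perturbation-based quantity $\rho$ from \eqref{rhodef}. The central structural observation is that $\eta$ is obtained from $\rho$ by restricting the infimum over $y$ to the polar cone $-K^\circ\subseteq\mathbb{R}^m$, so $\rho\leq\eta$ pointwise and consequently $\operatorname{cl}\rho\leq\operatorname{cl}\eta$ by monotonicity of closure.

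The first assertion then follows immediately by combining this inequality with \eqref{compositeineq}, which Theorem~\ref{simplecomposite} delivers unconditionally: $(g\circ F)^*(\bar v)\leq\operatorname{cl}\rho(\bar v)\leq\operatorname{cl}\eta(\bar v)$. Interestingly, the $K$-convexity and restricted $K$-increasing hypotheses are not logically needed for this inequality, though they ensure convexity of $g\circ F$ via Proposition~\ref{allconvexityprop:label} so that $(g\circ F)^{*}$ is informative.

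For the equality case I would first establish $\operatorname{dom}(g^*)\subseteq -K^\circ$ under $K\in\mathcal{K}_g$. If $y\notin -K^\circ$, pick $k\in K$ with $\langle y,k\rangle<0$ and any $w_0\in\operatorname{dom}(g)$; by $K$-increasingness, $g(w_0 - tk)\leq g(w_0)$ for every $t\geq 0$, so $\langle y,w_0-tk\rangle - g(w_0 - tk)\to+\infty$ as $t\to+\infty$, forcing $g^*(y)=+\infty$. Hence $\rho=\eta$. To invoke the equality case of Theorem~\ref{simplecomposite} I then need the baseline $f\in\Gamma_0(\mathbb{R}^n\times\mathbb{R}^m)$ and $0\in U$; the latter is immediate from $\operatorname{dom}(g)\cap\operatorname{rge}(F)\neq\emptyset$ since $U=\operatorname{dom}(g)-\operatorname{rge}(F)$ when $f_0\equiv 0$. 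Convexity and properness of $f$ follow from Proposition~\ref{allconvexityprop:label} with $K\in\mathcal{K}_F\cap\mathcal{K}_g$ together with $0\in U$. The main technical obstacle is closedness of $f$, which I would obtain by showing $f=f^{**}$: Lemma~\ref{gnlplagrangianlemma} gives $f^*(v,y)=(\langle y,F\rangle)^*(v)+g^*(y)$, and taking the biconjugate while using the hypothesis $\langle y,F\rangle\in\Gamma_0$ for $y\in -K^\circ$ (so $(\langle y,F\rangle)^{**}=\langle y,F\rangle$), $g\in\Gamma_0$ (so $g^{**}=g$), and the identity $(g^* + \delta_{-K^\circ})^*=g$ (a consequence of $K$-increasingness, since $g^*+\delta_{-K^\circ}=g^*$) should yield $f^{**}(x,u)=g(F(x)+u)$ on $\operatorname{dom}(F)\times\mathbb{R}^m$ and $+\infty$ elsewhere, matching $f$.

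For the third assertion, the weaker restricted $K$-increasing generally precludes $\rho=\eta$, so I would introduce the auxiliary function $\tilde g(w):=\inf_{k\in K}g(w+k)$, i.e., the infimal convolution of $g$ with $\delta_{-K}$. This $\tilde g$ is $K$-increasing on all of $\mathbb{R}^m$ by construction, satisfies $\tilde g^*(y)=g^*(y)+\delta_{-K^\circ}(y)$ by standard conjugation, and makes the paper's $\rho$ computed with $\tilde g$ coincide exactly with $\eta$. The restricted $K$-increasing hypothesis further gives $\tilde g(F(x))=g(F(x))$ for $x\in\operatorname{dom}(F)$, hence $\tilde g\circ F=g\circ F$. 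I would then apply Theorem~\ref{simplecomposite} parts 1 and 3 to $\tilde g$ in place of $g$, translating the qualification \eqref{burke:qc} to $0\in\operatorname{rint}(\tilde U)$ for $\tilde U=\operatorname{dom}(g)-K-\operatorname{rge}(F)$ through standard relative-interior calculus together with $K$-convexity of $F$. Establishing the corresponding baseline $\tilde f\in\Gamma_0$ is the remaining technical hurdle and is expected to be the hardest step: convexity and properness again come from Proposition~\ref{allconvexityprop:label} applied to $\tilde g$ and Burke's qualification, while closedness of $\tilde f$ requires a biconjugate argument in the spirit of part 2, now using closedness of $\tilde g$ and a careful handling of $(\langle y,F\rangle)^{**}$ for $y\in -K^\circ$ derived from the structure of the $K$-epigraph of $F$.
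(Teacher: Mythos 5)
This theorem is not proved in the paper at all: it is a restatement of \cite[Theorem 4]{burke2021study}, imported so that its hypotheses can be compared with those of \cref{simplecomposite}. So there is no internal proof to match your attempt against; what the paper does provide is exactly the machinery you are reaching for (\cref{firstequivlemma}, \cref{cscomparisonlemma:label}, \cref{gK:lemma}, \cref{Kzerolemma}), used there for comparison rather than derivation. Against that backdrop, your first two parts succeed. The chain $(g\circ F)^*\leq\operatorname{cl}\rho\leq\operatorname{cl}\eta$ is correct and matches the paper's observation that the first assertion needs none of the $K$-hypotheses. For the equality case, your argument that $K$-increasingness forces $g^*\equiv+\infty$ off $-K^\circ$ (hence $\rho=\eta$, using $\operatorname{dom}(F)\neq\emptyset$ so the extra terms are genuinely $+\infty$) is right, and your biconjugate route to $f\in\Gamma_0(\mathbb{R}^n\times\mathbb{R}^m)$ is a workable alternative to the paper's \cref{firstequivlemma}, which instead goes through the support function of the $K$-epigraph via \cite[Lemma 5]{burke2021study}.

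The genuine gap is in the third assertion. Your plan replaces $g$ by $g_K$ so that $\rho_{K}=\eta$, and then needs \eqref{burke:qc} to imply $0\in\operatorname{rint}(\tilde U)$ with $\tilde U=(\operatorname{dom}(g)-K)-\operatorname{rge}(F)$. By the relative-interior calculus in the proof of \cref{cscomparisonlemma:label}, that condition reads $\operatorname{rint}(\operatorname{dom}(g)-K)\cap\operatorname{rint}\left(\operatorname{rge}(F)+K\right)\neq\emptyset$, whereas \eqref{burke:qc} involves $F\left(\operatorname{rint}(\operatorname{dom}(F))\right)$, and the paper states explicitly (following \cite[pp.~1335--1336]{burke2021study}) that conditions of the first type, \eqref{csgeneralf}, and of the second type, \eqref{burke:qc:rewritten}, are \emph{incomparable} --- each can hold while the other fails. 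So "standard relative-interior calculus together with $K$-convexity of $F$" cannot convert one into the other; your route proves a variant of the third assertion under a different constraint qualification, not the stated one. A second, independent obstruction is the baseline $\tilde f\in\Gamma_0(\mathbb{R}^n\times\mathbb{R}^m)$ required by part 3 of \cref{simplecomposite}: here $g$ is only assumed to lie in $\Gamma(\mathbb{R}^m)$ and no closedness of $\mydot{y}{F}$ is assumed, and $g_K$ can fail to be proper or closed even for well-behaved $g$ (the paper's \cref{sqrtexample} is built to make exactly this point). You flag this as the hardest step but leave it unresolved; without it the appeal to \cref{simplecomposite} is unavailable, and closing it would essentially require reproducing the original argument of \cite{burke2021study} rather than deriving the result from the paper's framework.
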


A chain rule for the sub-differential of $g\circ F$ is also presented in \cite[Corollary 2]{burke2021study}. This result is identical to 
\cref{sub:diff:calc:coro} except for the necessary conditions for \eqref{sub:diff:calc:coro:eq} to hold (i) as is and (ii) as an equality. These conditions are the same as those (i) for  \eqref{burke:compositeineq} to hold and (ii) for  \eqref{burke:compositeineq}  to hold at equality, $\eta$ being closed and then infimum in \eqref{etadef} being attained. Hence, a comparison between \cite[Corollary 2]{burke2021study} and \cref{sub:diff:calc:coro} follows directly from the comparison between \ref{burketheo} and \cref{simplecomposite} in Section~\ref{comparison:sub:section}.

Another corollary of \cref{burketheo} considered in \cite{burke2021study} is \cite[Corollary 3]{burke2021study} which extends the convex conjugate and sub-differential formulas for the case $f_0\not\equiv 0$. We compare \cite[Corollary 3]{burke2021study}  with \cref{simplecompositeadditive} in Section~\ref{additive:comparison:section}.

\subsection{Comparison between Theorems \ref{simplecomposite} and \ref{burketheo}}\label{comparison:sub:section}

Under different requirements, both $\eta:\mathbb{R}^n\to \overline{\mathbb{R}}$ from Theorem~\ref{burketheo} and $\rho:\mathbb{R}^n\to \overline{\mathbb{R}}$
from Theorem~\ref{simplecomposite} (with $f_0\equiv 0$) yield upper bounds on $\left(g \circ F\right)^*$ with $\rho$ being a potentially tighter bound:
\begin{equation}
    \left(g \circ F\right)^*(\bar{v})\leq \operatorname{cl}\rho\left(\bar{v}\right)\leq \operatorname{cl}\eta\left(\bar{v}\right)\quad\forall \bar{v}\in \mathbb{R}^n.
\end{equation}
It is worth noting that the only reason that $\rho$ is potentially stronger than $\eta$ is that the definition of $\eta$ in  \eqref{etadef} restricts the infimum over values $y\in -K^\circ$.  We will further study this in Section~\ref{grepairsection}. 

To compare the requirements of Theorems~\ref{simplecomposite} and \ref{burketheo} we will use the following lemma.

\begin{restatable}{lemma}{firstequivlemmastatement} \label{firstequivlemma}
Let $K\subseteq \mathbb{R}^m$ be a closed convex cone,  $g:\mathbb{R}^m\to \overline{\mathbb{R}}$, $F:\mathbb{R}^n\to \mathbb{R}^m\cup\{+\infty_\bullet\}$, $\mydot{y}{F}:\mathbb{R}^n\to \overline{\mathbb{R}}$ be given by \eqref{extendedscalarizationdef} and $f:\mathbb{R}^n\times\mathbb{R}^m\to \overline{\mathbb{R}}$ given by \eqref{nofzerogeneralcompositef}.

If  $g\in\Gamma_0(\mathbb{R}^m)$,  $K\in \mathcal{K}_F\cap\mathcal{K}_{g}$, $
    \operatorname{dom}(g)\cap\operatorname{rge}(F)\neq \emptyset$, and $
    \mydot{y}{F}\in\Gamma_0(\mathbb{R}^n)$ for all $y\in -K^\circ$,   then $f \in\Gamma_0(\mathbb{R}^n\times \mathbb{R}^m)$.

\end{restatable}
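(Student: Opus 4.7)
The plan is to verify the three properties defining $\Gamma_0(\mathbb{R}^n\times \mathbb{R}^m)$: convexity, properness, and closedness. Convexity is immediate from implication \eqref{allconvexitypropequiv} in \cref{allconvexityprop:label} since $K\in \mathcal{K}_F\cap\mathcal{K}_g$. Properness is also quick: $g\in\Gamma_0(\mathbb{R}^m)$ guarantees $f>-\infty$ everywhere, and the hypothesis $\operatorname{dom}(g)\cap \operatorname{rge}(F)\neq \emptyset$ supplies some $\bar x\in \operatorname{dom}(F)$ with $F(\bar x)\in \operatorname{dom}(g)$, so $f(\bar x,0)=g(F(\bar x))<+\infty$.

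The main work is closedness, which I plan to obtain by exhibiting $f$ as a pointwise supremum of closed convex functions via the biconjugacy identity $g=g^{**}$. The decisive step is the inclusion $\operatorname{dom}(g^*)\subseteq -K^\circ$, which is where $K$-increasingness enters. The argument: if $y\notin -K^\circ$, pick $k\in K$ with $\mydot{y}{k}<0$ and any $w_0\in \operatorname{dom}(g)$; $K$-increasingness applied to $w_0-tk$ with the increment $tk\in K$ gives $g(w_0-tk)\leq g(w_0)$ for all $t\geq 0$, whence $g^*(y)\geq \mydot{y}{w_0}-t\mydot{y}{k}-g(w_0)\to +\infty$ as $t\to +\infty$.

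With this inclusion, I verify the representation
\[
f(x,u)=\sup_{y\in -K^\circ}\Big\{\mydot{y}{F}(x)+\mydot{y}{u}-g^*(y)\Big\}\qquad \forall (x,u)\in \mathbb{R}^n\times \mathbb{R}^m,
\]
with $\mydot{y}{F}$ the scalarization from \eqref{extendedscalarizationdef}. On $\operatorname{dom}(F)\times\mathbb{R}^m$ this is just the biconjugacy expansion of $g$ at $F(x)+u$, with the supremum taken over $-K^\circ$ rather than $\operatorname{dom}(g^*)$ (allowed since the two sets differ only on points where $-g^*=-\infty$). Off $\operatorname{dom}(F)\times \mathbb{R}^m$ both sides equal $+\infty$, because $\mydot{y}{F}(x)=+\infty$ and $g\in\Gamma_0(\mathbb{R}^m)$ ensures $\operatorname{dom}(g^*)\neq \emptyset$, providing at least one $y\in -K^\circ$ with $g^*(y)<+\infty$.

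For each such $y$, the summand $\mydot{y}{F}(x)+\mydot{y}{u}-g^*(y)$ belongs to $\Gamma_0(\mathbb{R}^n\times \mathbb{R}^m)$ by the hypothesis $\mydot{y}{F}\in \Gamma_0(\mathbb{R}^n)$ combined with affineness in $u$, so the pointwise supremum is closed and convex. Together with the already-established properness, this yields $f\in\Gamma_0(\mathbb{R}^n\times \mathbb{R}^m)$. I expect the main obstacle to be extracting $\operatorname{dom}(g^*)\subseteq -K^\circ$ cleanly from $K$-increasingness alone, being careful to only invoke properness of $g$ (e.g.\ the fixed reference point $w_0\in \operatorname{dom}(g)$) rather than finite-valuedness, and to treat both the interior and the boundary of $\operatorname{dom}(F)$ uniformly when passing to the supremum representation.
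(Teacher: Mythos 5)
Your proof is correct, but it takes a genuinely different route from the paper's. The paper proves this by invoking \cite[Lemma 5]{burke2021study} to write $f=\psi^*$ for $\psi(v,y):=\sigma_{K\text{-}\operatorname{epi}(F)}(v,-y)+g^*(y)$, observing that $\psi$ is either identically $+\infty$ or in $\Gamma_0(\mathbb{R}^n\times\mathbb{R}^m)$, and ruling out the first case because it would force $f\equiv-\infty$, contradicting properness of $g$; closedness and convexity of $f$ then come for free from $f$ being a conjugate. You instead stay on the primal side: you derive the inclusion $\operatorname{dom}(g^*)\subseteq -K^\circ$ directly from $K$-increasingness (this is exactly the fact hiding behind \cref{Kzerolemma}, since $g$ being $K$-increasing means $g=g_K$ and hence $g^*=g^*+\delta_{-K^\circ}$), and then exhibit $f$ as the pointwise supremum over $y\in-K^\circ$ of the closed proper convex functions $(x,u)\mapsto\mydot{y}{F}(x)+\mydot{y}{u}-g^*(y)$, using biconjugacy of $g$ on $\operatorname{dom}(F)\times\mathbb{R}^m$ and a witness $y\in\operatorname{dom}(g^*)$ off it. Both arguments use the hypothesis $\mydot{y}{F}\in\Gamma_0(\mathbb{R}^n)$ for $y\in-K^\circ$ in an essential way, and both are correct; yours is more self-contained (it does not import the external representation $f=\psi^*$), at the cost of re-deriving the domain inclusion. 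One small point of hygiene: to avoid any $(+\infty)-(+\infty)$ ambiguity for indices $y\in-K^\circ\setminus\operatorname{dom}(g^*)$ at points $x\notin\operatorname{dom}(F)$, it is cleanest to take the supremum over $y\in\operatorname{dom}(g^*)$ (which you have shown is contained in $-K^\circ$); this does not change the function and every summand is then genuinely in $\Gamma_0(\mathbb{R}^n\times\mathbb{R}^m)$.
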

\pointtoproof{firstequivlemma:proof}

With regards to $\operatorname{cl}\rho$ and $\operatorname{cl}\eta$ providing upper bounds on $(g\circ F)^*$ we see that Theorem~\ref{simplecomposite} does not require anything for it to hold, while Theorem~\ref{burketheo} requires the existence of a closed convex cone  $K\in \mathcal{K}_F\cap\mathcal{K}_{g,\operatorname{rng}(F)}$, which is used to define $\eta$. By Proposition~\ref{allconvexityprop:label} these conditions imply convexity of $g\circ F$ which may be considered a natural setting for studying $(g\circ F)^*$. Nonetheless, $(g\circ F)^*$ is convex even if $g\circ F$ is not and Theorem~\ref{simplecomposite} provides $\rho$ as a convex upper bound of $(g\circ F)^*$ even in such non-convex setting.

With regards to $\operatorname{cl}\rho$ and $\operatorname{cl}\eta$ being equal to $(g\circ F)^*$, by Lemma~\ref{firstequivlemma} we have that the requirements of Theorem~\ref{burketheo} imply that $f \in\Gamma_0(\mathbb{R}^n\times \mathbb{R}^m)$  and $0\in \operatorname{dom}(g)-\operatorname{rng}(F)$, which are essentially the requirements of Theorem~\ref{simplecomposite} ($0\in \operatorname{dom}(g)-\operatorname{rng}(F)$ implies $\rho$ being proper, which is the real requirement of Theorem~\ref{simplecomposite}). Combining this with Proposition~\ref{allconvexityprop:label} we see that the requirements of Theorem~\ref{simplecomposite} for
$(g\circ F)^*=\operatorname{cl}\rho$ imply the existence of a closed convex cone  $K\in \mathcal{K}_F\cap\mathcal{K}_{g}$. The requirement $K\in \mathcal{K}_F\cap\mathcal{K}_{g}$ also appears in Theorem~\ref{burketheo} for $(g\circ F)^*=\operatorname{cl}\eta$. Hence the additional flexibility of using $\mathcal{K}_{g,\operatorname{rng}(F)}$ instead of $\mathcal{K}_{g}$ does not seem to provide an advantage when approximating $(g\circ F)^*$ (getting an upper bound does not require it and it is not sufficient to guarantee a tight upper bound). 

With regards to $\eta$ being closed and that the infimum in its definition being attained, we do have that the weaker requirement $K\in \mathcal{K}_F\cap\mathcal{K}_{g,\operatorname{rng}(F)}$ from Theorem~\ref{burketheo} can provide an advantage over the more restrictive $K\in \mathcal{K}_F\cap\mathcal{K}_{g}$ required by Theorem~\ref{simplecomposite} for the same properties holding for $\rho$. However, it could be argued that such properties for $\eta$ are less interesting when $\operatorname{cl}\eta$ is a strict upper bound on $(g\circ F)^*$. For this reason we first restrict our attention to the case where Theorem~\ref{simplecomposite} and \ref{burketheo} imply $(g\circ F)^*=\operatorname{cl}\rho=\operatorname{cl}\eta$ and then in Section~\ref{grepairsection} study how the existence of $K\in \mathcal{K}_F\cap\mathcal{K}_{g,\operatorname{rng}(F)}\setminus \mathcal{K}_{g}$ can still be useful in this context. For simplicity we consider the following joint condition for $(g\circ F)^*=\operatorname{cl}\rho=\operatorname{cl}\eta$. First we assume
\begin{subequations}\label{jointassumptions:baseline}
    \begin{align}
    g&\in\Gamma_0(\mathbb{R}^m),\\ 
    \operatorname{dom}(g)\cap\operatorname{rge}(F) &\neq \emptyset,\\
    f:\mathbb{R}^n\times\mathbb{R}^m\to \overline{\mathbb{R}}\quad&\text{is closed,}\\
    \mydot{y}{F}:\mathbb{R}^n\to \overline{\mathbb{R}}\quad&\text{is proper and closed $\forall y\in \mathbb{R}^m$.}
\end{align}
\end{subequations}
Now, by \cite[Theorem 3]{burke2021study} we have that $K\in \mathcal{K}_{F}$ if and only if
$ \mydot{y}{F}:\mathbb{R}^n\to \overline{\mathbb{R}}$ is convex for all $y\in -K^\circ$. Then, by \cref{allconvexityprop:label} a joint condition for $(g\circ F)^*=\operatorname{cl}\rho=\operatorname{cl}\eta$ is given by \eqref{jointassumptions:baseline} plus either of the following equivalent conditions 
\begin{subequations}\label{jointassumptions}
    \begin{align}
    \label{jointassumptions:a}
   \mathcal{K}_F\cap\mathcal{K}_{g}&\neq \emptyset,\\ 
    f&\text{ is convex}.\label{jointassumptions:b}
\end{align}
\end{subequations}
More precisely, for $(g\circ F)^*=\operatorname{cl}\eta$, the condition is that we use $K\in\mathcal{K}_F\cap\mathcal{K}_{g}$ to define $\eta$ through \eqref{etadef}. To compare the closure and infimum-attainment conditions from Theorem~\ref{simplecomposite} and \ref{burketheo} in this context we will use the following lemma.

\begin{restatable}{lemma}{cscomparisonlemma}\label{cscomparisonlemma:label}
If \eqref{jointassumptions:baseline} and \eqref{jointassumptions:a} (or equivalently \eqref{jointassumptions:b}) holds for $g$ and $F$, then the general condition     
    $0\in \operatorname{rint}(U)$ from Theorem~\ref{simplecomposite} for $f_0\equiv 0$ is equivalent to
    \begin{equation}\label{csgeneralf}
   \exists K\in \mathcal{K}_F\cap\mathcal{K}_{g}\quad\text{s.t}\quad \operatorname{rint}\left(\operatorname{dom}(g)\right)\cap \operatorname{rint}\left(F\left(\operatorname{dom}(F)\right)+K\right)\neq \emptyset,
\end{equation}
the weaker condition for piecewise linear-quadratic functions $0\in U$ from Theorem~\ref{simplecomposite} for $f_0\equiv 0$ is equivalent to
 \begin{equation}\label{cspwlqf}
     \exists K\in \mathcal{K}_F\cap\mathcal{K}_{g}\quad\text{s.t}\quad \left(\operatorname{dom}(g)-K\right)\cap \left(F\left(\operatorname{dom}(F)\right)\right)\neq \emptyset,
\end{equation}
and condition \eqref{burke:qc} from Theorem~\ref{burketheo} is equivalent to
\begin{equation}\label{burke:qc:rewritten}
  K\in \mathcal{K}_F\cap\mathcal{K}_{g}\quad\text{and}\quad   \operatorname{rint}\left(\operatorname{dom}(g)-K\right)\cap F\left(\operatorname{rint}\left(\operatorname{dom}(F)\right)\right)\neq \emptyset,
\end{equation}
\end{restatable}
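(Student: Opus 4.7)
The plan is to reduce all three equivalences to two elementary consequences of the cone orderings that hold for any $K\in \mathcal{K}_F\cap\mathcal{K}_g$. First, if $K\in \mathcal{K}_g$, then applying $g(w-k)\leq g(w)$ for $w\in \operatorname{dom}(g)$ and $k\in K$ shows $\operatorname{dom}(g)-K\subseteq \operatorname{dom}(g)$, while $0\in K$ gives the reverse inclusion, so $\operatorname{dom}(g)-K=\operatorname{dom}(g)$. Second, if $K\in \mathcal{K}_F$, then $F(\operatorname{dom}(F))+K$ is convex, being the projection onto the second coordinate of the convex set $K\text{-}\operatorname{epi}(F)$. Using \eqref{jointassumptions:a} to pick some $K\in \mathcal{K}_F\cap\mathcal{K}_g$ and combining these yields the key rewrite
\[
U=\operatorname{dom}(g)-F(\operatorname{dom}(F))=\bigl(\operatorname{dom}(g)-K\bigr)-F(\operatorname{dom}(F))=\operatorname{dom}(g)-\bigl(F(\operatorname{dom}(F))+K\bigr),
\]
which expresses $U$ as a Minkowski difference of two convex sets.

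For equivalence~1, I would apply the standard identity $\operatorname{rint}(A-B)=\operatorname{rint}(A)-\operatorname{rint}(B)$ for convex $A,B$ to $A=\operatorname{dom}(g)$ and $B=F(\operatorname{dom}(F))+K$, obtaining
\[
0\in \operatorname{rint}(U)\;\Longleftrightarrow\; \operatorname{rint}(\operatorname{dom}(g))\cap \operatorname{rint}\bigl(F(\operatorname{dom}(F))+K\bigr)\neq \emptyset,
\]
which is precisely \eqref{csgeneralf}; the existential quantifier on $K$ in \eqref{csgeneralf} is harmless because the left-hand side does not depend on $K$ and \eqref{jointassumptions:a} guarantees a valid choice exists. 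For equivalence~2, the baseline assumption $\operatorname{dom}(g)\cap\operatorname{rge}(F)\neq \emptyset$ makes both $0\in U$ and \eqref{cspwlqf} trivially true: the former restates the baseline, while $\operatorname{dom}(g)-K=\operatorname{dom}(g)$ collapses $(\operatorname{dom}(g)-K)\cap F(\operatorname{dom}(F))$ to $\operatorname{dom}(g)\cap F(\operatorname{dom}(F))$.

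For equivalence~3, the same simplification $\operatorname{dom}(g)-K=\operatorname{dom}(g)$ reduces the rint condition in \eqref{burke:qc}/\eqref{burke:qc:rewritten} to the cone-free statement $\operatorname{rint}(\operatorname{dom}(g))\cap F(\operatorname{rint}(\operatorname{dom}(F)))\neq \emptyset$ whenever $K\in \mathcal{K}_g$, and \eqref{jointassumptions:a} guarantees such a $K\in \mathcal{K}_F\cap\mathcal{K}_g$ is available, so the explicit requirement ``$K\in \mathcal{K}_F\cap\mathcal{K}_g$'' attached to \eqref{burke:qc:rewritten} adds no content. The main subtlety here is that Theorem~\ref{burketheo} only postulates $K\in \mathcal{K}_F\cap\mathcal{K}_{g,\operatorname{rng}(F)}$, a potentially larger class than $\mathcal{K}_F\cap\mathcal{K}_g$; the joint assumption \eqref{jointassumptions:a} is exactly what allows the upgrade to $\mathcal{K}_g$, after which the cone dependence of the rint condition disappears and the two formulations coincide.
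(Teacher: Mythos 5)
Your proof is correct and follows essentially the same route as the paper: rewrite $U=\operatorname{dom}(g)-\bigl(F(\operatorname{dom}(F))+K\bigr)$ using $\operatorname{dom}(g)-K=\operatorname{dom}(g)$ for $K\in\mathcal{K}_g$ and convexity of $F(\operatorname{dom}(F))+K$ as a projection of the $K$-epigraph, then apply the relative-interior-of-a-sum identity. The only differences are cosmetic: you prove $\operatorname{dom}(g)-K=\operatorname{dom}(g)$ directly rather than citing Burke et al., and you make explicit the (harmless) role of the existential quantifier over $K$, both of which the paper leaves implicit.
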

\pointtoproof{cscomparisonlemma:proof}

As noted in \cite[pages 1335--1336]{burke2021study}, condition \eqref{csgeneralf} matches the Attouch-Br\'{e}zis-type constraint qualification conditions from \cite{combari1994sous} and is in general incomparable with \eqref{burke:qc:rewritten} (each being weaker than the other in some cases). In contrast, condition \eqref{cspwlqf} is weaker than the two others as expected from it only being sufficient when $f$ is a piecewise linear-quadratic function.

For all three conditions, we see that it will be useful to understand properties of the largest cone $K\in \mathcal{K}_F\cap\mathcal{K}_{g}$ for which we can rely on the following result.

\begin{lemma}[{\cite[Lemma 10, Proposition 24 and 55]{gissler2023note}}]\label{KFprop}
Let $F:\mathbb{R}^n\to\mathbb{R}^m\cup\{+\infty_\bullet\}$ be such that $\operatorname{dom}(F)$ is non-empty and convex. Then there exist a non-empty, closed and convex cone $K_F\subseteq \mathbb{R}^m$ such that $F$ is $K$-convex for a closed convex cone $K\subseteq \mathbb{R}^m$ if and only if $K_F\subseteq K$. 

Let $g\in \Gamma_0\left(\mathbb{R}^m\right)$. Then $g$ is $K$-increasing for a closed convex cone $K\subseteq \mathbb{R}^m$ if and only if $K\subseteq -\operatorname{hzn}(g)$, where 
\[\operatorname{hzn}(g):=\{d\in \mathbb{R}^m\,:\, g(x+d)\leq g(x)\quad \forall x\in \operatorname{dom}(g)\}\]
is the horizon cone of $g$.

In particular, we have that 
\[\mathcal{K}_F\cap\mathcal{K}_{g}=\left\{K\subseteq \mathbb{R}^m\,:\, 
    \begin{aligned}
    \text{$K$ is a closed convex cone and}\\
    \text{$K_F\subseteq K\subseteq -\operatorname{hzn}(g)$}\end{aligned}\right\}.\]
for any closed convex cone $K\subseteq \mathbb{R}^m$ we have that $F$ is $K$-convex and $g$ is $K$-increasing if and only if $K_F\subseteq K\subseteq -\operatorname{hzn}(g)$.
\end{lemma}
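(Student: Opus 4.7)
The plan is to prove the three claims in sequence, with the third following immediately from combining the first two.

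For the existence of a minimal $K_F$, the strategy is to unpack the convexity of the set $K\text{-}\operatorname{epi}(F)$ directly. Using that $\operatorname{dom}(F)$ is non-empty and convex, a direct computation shows that $K\text{-}\operatorname{epi}(F)$ is convex if and only if for every $x_1,x_2\in\operatorname{dom}(F)$ and $\lambda\in[0,1]$ the Jensen gap
\[ J_F(\lambda,x_1,x_2):=\lambda F(x_1)+(1-\lambda)F(x_2)-F(\lambda x_1+(1-\lambda)x_2) \]
lies in $K$. Let $D_F$ be the set of all such Jensen gaps (which contains $0$, e.g.\ by taking $x_1=x_2$). I would then define
\[ K_F:=\operatorname{cl}\!\left(\operatorname{conv}\!\left(\operatorname{cone}(D_F)\right)\right), \]
which is a non-empty closed convex cone by construction. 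Since $K$ is already a closed convex cone, the inclusion $D_F\subseteq K$ is equivalent to $K_F\subseteq K$, so $F$ is $K$-convex if and only if $K_F\subseteq K$; taking $K=K_F$ then gives that $F$ is itself $K_F$-convex.

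For the characterization of $K$-monotonicity of $g$, I would rewrite the condition $g(w)\leq g(w+k)$ for all $w\in\mathbb{R}^m$, $k\in K$ using the substitution $x=w+k$: this is the condition $g(x-k)\leq g(x)$ for every $x\in\mathbb{R}^m$ and $k\in K$. When $x\notin\operatorname{dom}(g)$ the inequality is automatic, and when $x\in\operatorname{dom}(g)$ it is precisely $-k\in\operatorname{hzn}(g)$, so $g$ is $K$-increasing iff $-K\subseteq\operatorname{hzn}(g)$ iff $K\subseteq -\operatorname{hzn}(g)$. The reverse direction just needs to re-insert the ``$w+k\notin\operatorname{dom}(g)$'' case, which is immediate since $g(w+k)=+\infty$. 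The third claim is then a direct combination: $K\in\mathcal{K}_F\cap\mathcal{K}_g$ iff $K_F\subseteq K$ and $K\subseteq -\operatorname{hzn}(g)$, which is the stated sandwich.

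The main obstacle is almost entirely bookkeeping rather than a hard analytic step. The most subtle point is verifying that $D_F\subseteq K$ is equivalent to $K_F\subseteq K$: this uses crucially that $K$ is a closed convex cone, so that passing to the closed convex conic hull of $D_F$ stays inside $K$, which is what makes the definition of $K_F$ canonical. A secondary subtlety is correctly handling the infinite element $+\infty_\bullet$ together with the convention that $K\text{-}\operatorname{epi}(F)$ excludes points where $F(x)=+\infty_\bullet$, so that $K$-convexity is really $K$-convexity of $F$ restricted to $\operatorname{dom}(F)$; the assumption that $\operatorname{dom}(F)$ is non-empty and convex is exactly what prevents vacuous or ill-defined situations in the Jensen-gap construction.
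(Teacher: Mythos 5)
Your proof is correct. Note that the paper itself does not prove this lemma --- it is imported verbatim from \cite{gissler2023note} (Lemma 10, Propositions 24 and 55), so there is no in-paper argument to compare against; what you have written is a self-contained replacement for the citation. The key steps all check out: the equivalence between convexity of $K\text{-}\operatorname{epi}(F)$ and membership of every Jensen gap in $K$ uses exactly the two facts you need and have available ($0\in K$ to place the points $\left(x_i,F(x_i)\right)$ in the $K$-epigraph for the forward direction, and convexity of $\operatorname{dom}(F)$ together with $K$ being closed under addition and nonnegative scaling for the reverse), and taking $K_F$ to be the closed convex conic hull of the Jensen-gap set $D_F$ is the canonical way to extract the minimal cone, since for a closed convex cone $K$ one has $D_F\subseteq K$ if and only if the closed convex conic hull of $D_F$ is contained in $K$. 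The monotonicity characterization is likewise a correct unwinding of the definitions, with properness of $g$ (from $g\in\Gamma_0(\mathbb{R}^m)$) being what makes the case $x\notin\operatorname{dom}(g)$ vacuous, and the final identity is just the intersection of the two characterizations. The only caveat worth recording is that the statement is consistent only under the standard convention that a closed convex cone contains the origin (otherwise $K=\emptyset$ would make $F$ vacuously $K$-convex while violating $K_F\subseteq K$); this convention is implicit in both your argument and the cited source.
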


\subsection{Using K-convexity to satisfy necessary conditions}\label{grepairsection}

We illustrate the potential value of a cone $K\in \mathcal{K}_F\cap\mathcal{K}_{g,\operatorname{rng}(F)}\setminus \mathcal{K}_{g}$ through the following two examples.

\begin{example}[{\cite[Variant of Example 1]{burke2021study}}]\label{modifiedburkeexample1}
Let $F:\mathbb{R}^2\to \mathbb{R}^2$ be given by $F(x):=(0, x_2)$ and $g:\mathbb{R}^2\to \overline{\mathbb{R}}$ be given by 
\[g(w):=\begin{cases}\frac{1}{3} w_1^3 &w_1\geq 0\\+\infty &\text{o.w.}\end{cases}.\]

We have that $g\circ F\equiv 0$, $(g\circ F)^*=\delta_{\{(0,0)\}}$, $\operatorname{dom}(g)=\mathbb{R}_+\times \mathbb{R}$, $\operatorname{dom}(F)=\mathbb{R}^2$, $\operatorname{rge}(F)=\{0\}\times\mathbb{R}$ and
$f(x,u)=g(u)$ for all $x\in \mathbb{R}^2$ and $u\in \mathbb{R}^2$.
Then $f\in \Gamma_0\left(\mathbb{R}^2\times \mathbb{R}^2\right)$  and $0\in U=\operatorname{dom}(g)-\operatorname{rng}(F)=\mathbb{R}_+\times \mathbb{R}$, so by Theorem~\ref{simplecomposite} we have $(g\circ F)^*=\operatorname{cl}(\rho)$. Indeed, 
\[g^*(v):=\begin{cases}\frac{2}{3} v_1^{3/2} &v_1\geq 0,\; v_2=0\\
0&v_1<0,\;v_2=0\\+\infty &\text{o.w.}\end{cases}\]
and $(\mydot{y}{F})^*=\delta_{\{0\}\times\{y_2\}}$, so 
\begin{equation}
    \rho(v)=\inf_{y \in \mathbb{R}^2}\delta_{\{0\}\times\{y_2\}}(v)+g^*(y)=\delta_{\{0\}\times\mathbb{R}}(v)+g^*(0,v_2)=\delta_{\{(0,0)\}}(v).
\end{equation}
Hence,  $\rho$ is additionally closed and the infimum in its definition is attained. Unfortunately, $f$ is not piecewise linear-quadratic and $0\not\in \operatorname{rint}(U)=\operatorname{rint}(\mathbb{R}_+\times \mathbb{R})$ so condition \eqref{csgeneralf} is not satisfied and hence Theorem~\ref{simplecomposite} cannot be used to deduce these additional properties.

Now, we also have $K_F=\{(0,0)\}\subseteq -\operatorname{hzn}(g)=\{0\}\times\mathbb{R}$, so by Lemma~\ref{KFprop} the largest cone in $\mathcal{K}_F\cap\mathcal{K}_{g}$ is $K_1:=\{0\}\times\mathbb{R}$.  As expected,
\begin{equation}\label{modifiedburkeexample1:ricond}
    \operatorname{rint}\left(\operatorname{dom}(g)\right)\cap \operatorname{rint}\left(F\left(\operatorname{dom}(F)\right)+K\right)\neq \emptyset
\end{equation}
does not hold for $K=K_1$ as \eqref{csgeneralf} does not hold. However, \eqref{modifiedburkeexample1:ricond} does hold for $K=K_2:=\mathbb{R}_+\times \{0\}\in \mathcal{K}_F\cap\mathcal{K}_{g,\operatorname{rng}(F)}\setminus \mathcal{K}_{g}$.

\end{example}

\begin{example}[{\cite[Example 58]{gissler2023note}}]\label{gisslerexample1}
Let $F:\mathbb{R}^2\to \mathbb{R}^2$ be given by $F(x):=\left(\frac{1}{2}x_1^2, x_2\right)$ and $g:\mathbb{R}^2\to \overline{\mathbb{R}}$ be given by 
$g(w)=|w_1|$.

We have that  $g\circ F(x)=\frac{1}{2}w_1^2$, $(g\circ F)^*(v)=\frac{1}{2}v_1^2+\delta_{\{0\}}(v_2)$, $g^*=\delta_{[-1,1]\times \{0\}}$, 
\[(\mydot{y}{F})^*(v)=\begin{cases}\frac{1}{2y_1}v_1^2 &y_1>0, y_2=v_2\\+\infty &\text{o.w.}\end{cases},\]
so
\begin{equation*}
    \rho(v)=\inf_{\substack{y_1>0\\y_2\in \mathbb{R}}}\frac{1}{2y_1}v_1^2 +\delta_{\{y_2\}}(v_2)+g^*(y)=\frac{1}{2}v_1^2+g^*(1,v_2)=\frac{1}{2}v_1^2+\delta_{\{0\}}(v_2).
\end{equation*}
Hence, $(g\circ F)^*=\rho$, $\rho\in \Gamma_0(\mathbb{R}^2)$ and the infimum in the definition of $\rho$ is always attained. Unfortunately, 
$f(x,u)=g\left(F(x)+u\right)=\left|\frac{1}{2}x_1^2+u_1\right|$ is not convex and hence Theorem~\ref{simplecomposite} cannot be used to deduce these properties. 

Now, we can also check that $K_F=\mathbb{R}_+\times \{0\}$ and $\operatorname{hzn}(g)=\{0\}\times \mathbb{R}$ so by Lemma~\ref{KFprop} we have that $\mathcal{K}_F\cap\mathcal{K}_{g}$ is empty.  However, we also have $K_F\in \mathcal{K}_F\cap\mathcal{K}_{g,\operatorname{rng}(F)}\setminus \mathcal{K}_{g}$.

\end{example}

In both Examples~\ref{modifiedburkeexample1} and \ref{gisslerexample1} we have a cone $K\in \mathcal{K}_F\cap\mathcal{K}_{g,\operatorname{rng}(F)}\setminus \mathcal{K}_{g}$, which we would like to also be in $\mathcal{K}_F\cap\mathcal{K}_{g}$. Now, the motivation  for the definition of $\mathcal{K}_{g,\operatorname{rng}(F)}$ is that the behavior of $g$ outside of $\operatorname{rng}(F)$ does not affect $g\circ F$. This suggests that we may be able to modify $g$ to achieve $\mathcal{K}_{g,\operatorname{rng}(F)}=\mathcal{K}_{g}$ without changing $g\circ F$. Such modification is precisely what is provided by the following  observation from \cite[page 253]{pennanen1999graph}\footnote{Note that the definitions of $K$-convexity, $K$-epigraph, etc. in this reference are reversed compared to those in \cite{burke2021study,gissler2023note}. For instance, in \cite[page 1328]{burke2021study} it is noted that for $h:\mathbb{R}\to \overline{\mathbb{R}}$ convexity corresponds to $K$-convexity with $K=\mathbb{R}_+$, while in \cite[page 236]{pennanen1999graph} it is noted that for their version of it corresponds to $K$-convexity with $K=\mathbb{R}_-$.}

\begin{proposition}[\cite{pennanen1999graph}]\label{gK:lemma}
Let $g:\mathbb{R}^m\to \overline{\mathbb{R}}$, $F:\mathbb{R}^n\to \mathbb{R}^m\cup\{+\infty_\bullet\}$ and $K\in \mathcal{K}_F\cap\mathcal{K}_{g,\operatorname{rng}(F)}$. Then $g_K:\mathbb{R}^m\to \overline{\mathbb{R}}$ given by 
\begin{equation}\label{gkdef}
    g_K(w):=g \square \delta_{-K}=\inf_{k\in K}g(w+k)
\end{equation}is such that 
\begin{equation}\label{geqgk}
g(w)=g_K(w)\quad \forall w\in \operatorname{rge}(F).
\end{equation}
In addition, $K\in \mathcal{K}_F\cap\mathcal{K}_{g_K}$.

\end{proposition}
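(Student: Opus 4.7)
The plan is to prove the two claims of the proposition directly from the definition of $g_K$ as an infimal convolution. First I would establish the pointwise equality $g=g_K$ on $\operatorname{rge}(F)$ by a two-sided sandwich. Since $K$ is a closed convex cone it contains $0$, and selecting $k=0$ in the infimum that defines $g_K$ gives the global upper bound $g_K(w) \leq g(w+0) = g(w)$ for every $w \in \mathbb{R}^m$ (not only on $\operatorname{rge}(F)$). Conversely, the hypothesis $K \in \mathcal{K}_{g,\operatorname{rng}(F)}$ is precisely the statement that $g(w) \leq g(w+k)$ for all $k \in K$ and $w \in \operatorname{rge}(F)$, which makes $g(w)$ a lower bound for the infimum defining $g_K(w)$ at such $w$, so $g(w) \leq g_K(w)$. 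Combining the two inequalities yields \eqref{geqgk}.

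Second, since $K \in \mathcal{K}_F$ is part of the hypothesis, to conclude $K \in \mathcal{K}_F \cap \mathcal{K}_{g_K}$ it only remains to verify that $g_K$ is $K$-increasing. Here I would use that $K$ is closed under addition (a consequence of $K$ being a convex cone: for $k_1,k_2 \in K$, one has $k_1+k_2 = 2\bigl(\tfrac{1}{2}k_1 + \tfrac{1}{2}k_2\bigr) \in K$), so $k'+K \subseteq K$ for any $k' \in K$. Given $k' \in K$ and $w \in \mathbb{R}^m$ one then estimates
\[
g_K(w+k') \;=\; \inf_{k \in K} g(w + k' + k) \;=\; \inf_{k'' \in k' + K} g(w + k'') \;\geq\; \inf_{k'' \in K} g(w+k'') \;=\; g_K(w),
\]
which is exactly $K$-monotonicity of $g_K$ on all of $\mathbb{R}^m$ (not merely on $\operatorname{rge}(F)$), giving $K \in \mathcal{K}_{g_K}$.

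I do not expect any genuine obstacle: the argument is essentially a direct unpacking of the definitions, exploiting only that $K$ is a convex cone (to obtain $0 \in K$ and $K+K \subseteq K$) together with the restricted $K$-monotonicity of $g$ on $\operatorname{rge}(F)$. No properness, closedness or convexity of $g$ is needed for either claim, and the argument works uniformly whether $g_K(w)$ is finite, $+\infty$, or $-\infty$, since the infimal-convolution formula makes sense on $\overline{\mathbb{R}}$.
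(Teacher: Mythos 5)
Your argument is correct and complete: the sandwich $g_K\leq g$ (via $0\in K$) and $g\leq g_K$ on $\operatorname{rge}(F)$ (via restricted $K$-monotonicity) gives \eqref{geqgk}, and the subset inclusion $k'+K\subseteq K$ gives $K$-monotonicity of $g_K$ everywhere. The paper itself offers no proof of this proposition --- it is stated with an attribution to the observation on page 253 of the cited reference of Pennanen --- so your contribution is a self-contained elementary verification rather than an alternative to an argument in the text. Two minor remarks: your derivation tacitly uses that $K$ is nonempty (so that $0\in K$ and the infimum defining $g_K$ is not vacuously $+\infty$), which is the standing convention for the cones in $\mathcal{K}_F\cap\mathcal{K}_{g,\operatorname{rng}(F)}$ here; and it is worth noting, as the paper does after Example~\ref{sqrtexample}, that your proof deliberately uses no properness, closedness or convexity of $g$, which is consistent with the paper's warning that $g_K$ need not inherit properness or closedness from $g$ even though the two claims of the proposition always hold.
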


Replacing $g$ with $g_K$ for $K \in \mathcal{K}_F\cap\mathcal{K}_{g,\operatorname{rng}(F)}$  then has the potential to repair the issues in Examples~\ref{modifiedburkeexample1} and \ref{gisslerexample1}. However, while replacing $g$ with $g_K$ does not change $g\circ F$, it does change $g^*$ and hence $\rho$. Fortunately, using standard results (e.g. \cite[Theorem 11.23a]{rockafellar2009variational}) we get the following simple characterization of $g_K^*$ as a function of $g^*$.

\begin{lemma}\label{Kzerolemma}
    Let $K\subseteq \mathbb{R}^m$ be a closed convex cone, $g:\mathbb{R}^m\to \overline{\mathbb{R}}$ be a proper function and $g_K:\mathbb{R}^m\to \overline{\mathbb{R}}$ given by \eqref{gkdef}. Then $g^*_K=g^*+\delta_{-K^\circ}$.
\end{lemma}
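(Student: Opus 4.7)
The plan is to recognize $g_K$ as an infimal convolution and then invoke the standard conjugate-of-infimal-convolution identity cited in the hint.

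First, rewriting \eqref{gkdef} explicitly as $g_K(w)=\inf_{z\in\mathbb{R}^m}\{g(w-z)+\delta_{-K}(z)\}$, we see that $g_K = g\,\square\,\delta_{-K}$. Since $g$ is assumed proper and $\delta_{-K}$ is proper (because $0\in K$, so $0\in -K$), the pointed infimal-convolution conjugate formula $(g\,\square\,\delta_{-K})^* = g^* + \delta_{-K}^*$ applies; this is exactly the content of \cite[Theorem 11.23(a)]{rockafellar2009variational} in its one-sided form (the identity holds at the level of values, with no closure or qualification needed to obtain the inequality $\leq$, and the reverse inequality is immediate from Fenchel's inequality applied to the definition of infimal convolution).

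Second, I would compute $\delta_{-K}^*$ directly from the definition. For any $y\in\mathbb{R}^m$,
\begin{equation*}
\delta_{-K}^*(y) = \sup_{w\in -K}\langle y,w\rangle = \sup_{k\in K}\langle -y,k\rangle.
\end{equation*}
Because $K$ is a cone, this supremum is $0$ if $\langle -y,k\rangle\leq 0$ for all $k\in K$, that is, if $-y\in K^\circ$ (equivalently $y\in -K^\circ$), and is $+\infty$ otherwise. Hence $\delta_{-K}^* = \delta_{-K^\circ}$.

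Combining the two steps gives $g_K^* = g^* + \delta_{-K^\circ}$, as claimed. There is essentially no obstacle here: the only point that requires a moment of care is making sure the infimal-convolution conjugate identity is applied in the direction that does not require lower semicontinuity of the convolution itself (which could fail in general), and this is precisely the generality in which \cite[Theorem 11.23(a)]{rockafellar2009variational} is stated for proper functions.
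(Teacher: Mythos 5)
Your proof is correct and matches the paper's (implicit) argument: the paper simply cites \cite[Theorem 11.23a]{rockafellar2009variational} for the conjugate of the infimal convolution $g\,\square\,\delta_{-K}$, and the remaining step $\delta_{-K}^*=\delta_{-K^\circ}$ is the same elementary support-function computation you give. Nothing is missing.
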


Using Lemma~\ref{Kzerolemma} we see that the effect of replacing $g$ with $g_K$ in  definition \eqref{rhodef} of $\rho$ is replacing $y\in \mathbb{R}^m$ by $y\in -K^\circ$, which results precisely in definition \eqref{etadef} of $\eta$ and explains the difference between the definitions of $\rho$ and $\eta$\footnote{cf. \cite[Lemma 4]{burke2021study} and the $\psi^*$ in \cite[Lemma 5]{burke2021study} for which we have $f=\psi^*$.}.

Using Proposition~\ref{gK:lemma} and Lemma~\ref{Kzerolemma} we can resolve the issues in Examples~\ref{modifiedburkeexample1} and \ref{gisslerexample1} as follows.

\begin{example}[Example~\ref{modifiedburkeexample1} continued]
Using Proposition~\ref{gK:lemma} for  $K_2:=\mathbb{R}_+\times \{0\}\in \mathcal{K}_F\cap\mathcal{K}_{g,\operatorname{rng}(F)}$ we get $g_{K_2}:\mathbb{R}^2\to \overline{\mathbb{R}}$ given by 
\[g_{K_2}(w):=\begin{cases}\frac{1}{3} w_1^3 &w_1\geq 0\\0 &\text{o.w.}\end{cases},\]
for which $g_{K_2}\circ F=g\circ F$ and $\operatorname{dom}(g_{K_2})=\mathbb{R}^2$.  In addition, for $f_{K_2}:\mathbb{R}^2\times \mathbb{R}^2\to \overline{\mathbb{R}}$ given by $f_{K_2}(x,u)=g_{K_2}(F(x)+u)=g_{K_2}(u)$ we have $f_{K_2}\in \Gamma_0(\mathbb{R}^2\times \mathbb{R}^2)$, and $0\in \operatorname{rint}\left(U_{K_2}\right)=\operatorname{rint}\left(\operatorname{dom}(g_{K_2})-\operatorname{rng}(F)\right)=\mathbb{R}^2$. Hence, we can now apply Theorem~\ref{simplecomposite} to conclude that have $(g\circ F)^*=(g_{K_2}\circ F)^*=\rho_{K_2}$ where $\rho_{K_2}:\mathbb{R}^2\to \overline{\mathbb{R}}$ is given by 
\begin{equation*}
    \rho_{K_2}(v)=\inf_{y \in \mathbb{R}^2}\delta_{\{0\}\times\{y_2\}}(v)+g_{K_2}^*(y)=\delta_{\{0\}\times\mathbb{R}
    }(v)+g_{K_2}^*(0,v_2)=\delta_{\{(0,0)\}}(v),
\end{equation*}
where the last equation follows from Lemma~\ref{Kzerolemma}:
\[g_{K_2}^*(v)=g^*(v)+\delta_{-K_2^\circ}(v)=g^*(v)+\delta_{\mathbb{R}_+\times \mathbb{R}}(v)=\begin{cases}\frac{2}{3} v_1^{3/2} &v_1\geq 0,\; v_2=0\\+\infty &\text{o.w.}\end{cases}.\] 
\end{example}

\begin{example}[Example~\ref{gisslerexample1} continued]
Using Proposition~\ref{gK:lemma} for  $K_F=\mathbb{R}_+\times \{0\}\in \mathcal{K}_F\cap\mathcal{K}_{g,\operatorname{rng}(F)}$ we get $g_{K_F}:\mathbb{R}^2\to \overline{\mathbb{R}}$ given by 
$g_{K_F}(w):=\max\{w_1,0\}$ for which  $g_{K_F}\circ F=g\circ F$ and  $\operatorname{dom}(g_{K_F})=\mathbb{R}^2$.   In addition, for $f_{K_F}:\mathbb{R}^2\times \mathbb{R}^2\to \overline{\mathbb{R}}$ given by $f_{K_F}(x,u)=g_{K_F}(F(x)+u)=\max\left\{\frac{1}{2}x_1^2+u_1,0\right\}$ we have $f_{K_F}\in \Gamma_0(\mathbb{R}^2\times \mathbb{R}^2)$ and $0\in \operatorname{rint}\left(U_{K_F}\right)=\operatorname{rint}\left(\operatorname{dom}(g_{K_F})-\operatorname{rng}(F)\right)=\mathbb{R}^2$.  Hence, we can now apply Theorem~\ref{simplecomposite} to conclude that have $(g\circ F)^*=(g_{K_F}\circ F)^*=\rho_{K_F}$ where $\rho_{K_F}:\mathbb{R}^2\to \overline{\mathbb{R}}$ is given by 
\begin{equation*}
    \rho_{K_F}(v)=\inf_{\substack{y_1>0\\y_2\in \mathbb{R}}}\frac{1}{2y_1}v_1^2 +\delta_{\{y_2\}}(v_2)+g_{K_F}^*(y)=\frac{1}{2}v_1^2+g^*_{K_F}(1,v_2)=\frac{1}{2}v_1^2+\delta_{\{0\}}(v_2).
\end{equation*}
where the last equation follows from Lemma~\ref{Kzerolemma}:
\[g_{K_F}^*=g^*+\delta_{-K_F^\circ}=\delta_{[-1,1]\times\{0\}}+\delta_{\mathbb{R}_+\times\{0\}}=\delta_{[0,1]\times \{0\}}.\]
\end{example}

We end this section by noting that while $g_K$ preserves convexity of $g$ it does not necessarily preserve $g$ being proper or closed. Hence, repairing a bad choice of $g$ to describe $g\circ F$ may require additional steps such as first restricting $g$ to $\operatorname{rng}(F)$ (if this set is convex) or $\operatorname{rng}(F)+K$ (which is convex if $F$ is $K$-convex). This is illustrated in the following example. 

\begin{example}\label{sqrtexample}
Let $K=\mathbb{R}_+\times\{0\}$ and $g:\mathbb{R}^2\to \overline{\mathbb{R}}$ be given by
    \[g(w):=\begin{cases}
    -\sqrt{w_1 w_2} &w_1\geq 0,w_2\geq 0\\
    +\infty&\text{o.w.}
    \end{cases}.\]
Assume we want to compose $g$ with a $K$-convex function $F:\mathbb{R}\to\mathbb{R}^2\cup\{+\infty_\bullet\}$ for which $\operatorname{rng}(F)=\mathbb{R}_+\times\{0\}$. Using Proposition~\ref{gK:lemma} for $K\in \mathcal{K}_F\cap\mathcal{K}_{g,\operatorname{rng}(F)}\setminus \mathcal{K}_{g}$ we get $g_{K}:\mathbb{R}^2\to \overline{\mathbb{R}}$ given by
    \[g_K(w)=\begin{cases}
    -\infty&w_2>0\\
    0&w_2=0\\
    +\infty&\text{o.w.}
\end{cases},\]
which is not proper. However, if we first restrict $g$ to $\operatorname{rge}(F)=\operatorname{rge}(F)+K=\mathbb{R}_+\times\{0\}$ we get $(g+\delta_{\operatorname{rge}(F)})_{K}:\mathbb{R}^2\to \overline{\mathbb{R}}$ given by
\[(g+\delta_{\operatorname{rge}(F)})_{K}(w)=\begin{cases}
    0&w_2=0\\
    +\infty&\text{o.w.}
\end{cases},\]
for which $(g+\delta_{\operatorname{rge}(F)})_{K}\in\Gamma_0(\mathbb{R}^2)$. 
\end{example}

\subsection{Additive composite functions}\label{additive:comparison:section}

Using standard results on the conjugate of the sum of two functions (e.g. \cite[Theorem 16.4]{rockafellar2015convex}), \cite{burke2021study} also extends \cref{burketheo} to the case $f_0\not\equiv 0$ through \cite[Corollary 3]{burke2021study}, which we can restate as follows.

\begin{corollary}[{\cite[Corollary 3]{burke2021study}}]\label{burkecorollary}
Let $K\subseteq \mathbb{R}^m$ be a closed convex cone, $f_0:\mathbb{R}^n\to \overline{\mathbb{R}}$, $g:\mathbb{R}^m\to \overline{\mathbb{R}}$, $F:\mathbb{R}^n\to \mathbb{R}^m\cup\{+\infty_\bullet\}$, $\mydot{y}{F}:\mathbb{R}^n\to \overline{\mathbb{R}}$ be given by \eqref{extendedscalarizationdef}, and $f_0 + g \circ F:\mathbb{R}^n\to \overline{\mathbb{R}}$ be given by \eqref{generalcompositef}.

If $g\in\Gamma_0(\mathbb{R}^m)$,   $y_0 f_0+
    \mydot{y}{F}\in \Gamma_0(\mathbb{R}^n)$   for all $(y_0,y)\in\mathbb{R}_+\times (-K^\circ)$, $K\in \mathcal{K}_F\cap\mathcal{K}_{g}$, and
\begin{equation}\label{burkecorollary:qc}
  \operatorname{rint}\left(\operatorname{dom}(g)-K\right)\cap  F\left(\operatorname{rint}\left(\operatorname{dom}(f_0)\right)\cap\operatorname{rint}\left(\operatorname{dom}(F)\right)\right)\neq \emptyset,
\end{equation}
then, for any $\bar{v}\in \mathbb{R}^n$ we have
\begin{equation}
     \left(f_0+g \circ F\right)^*(\bar{v})=\min_{y \in-K^\circ,\;w\in \mathbb{R}^n}f_0^*(w)+\left(\mydot{y}{ F}\right)^*(\bar{v}-w)+g^*(y).
\end{equation}
\end{corollary}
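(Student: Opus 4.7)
The plan is to derive the corollary by combining \cref{burketheo} (the $f_0\equiv 0$ case) with the classical conjugate-of-a-sum identity \cite[Theorem 16.4]{rockafellar2015convex}.

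First, I would establish that $g\circ F\in \Gamma_0(\mathbb{R}^n)$. From the hypothesis with $y_0=0$, $\mydot{y}{F}\in \Gamma_0(\mathbb{R}^n)$ for all $y\in -K^\circ$. Because $g$ is $K$-increasing, $\operatorname{dom}(g)-K=\operatorname{dom}(g)$, so \eqref{burkecorollary:qc} implies in particular $\operatorname{dom}(g)\cap \operatorname{rge}(F)\neq\emptyset$. Together with $K\in \mathcal{K}_F\cap \mathcal{K}_g$ and $g\in \Gamma_0(\mathbb{R}^m)$, these are exactly the hypotheses of \cref{firstequivlemma}, which yields $f\in \Gamma_0(\mathbb{R}^n\times \mathbb{R}^m)$ for the perturbation function \eqref{nofzerogeneralcompositef}; then $g\circ F=f(\cdot,0)\in \Gamma_0(\mathbb{R}^n)$ by passing to the section at $u=0$. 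The hypothesis with $y_0=1,y=0$ supplies the corresponding closedness and properness for $f_0$ (after the harmless replacement $f_0\leftarrow f_0+\delta_{\operatorname{dom}(F)}$, which does not change $f_0+g\circ F$ since $\operatorname{dom}(g\circ F)\subseteq \operatorname{dom}(F)$).

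Second, I would apply \cref{burketheo} to $g\circ F$. All its hypotheses are now in hand, including qualification condition \eqref{burke:qc}, which is the content of \eqref{burkecorollary:qc} after dropping the intersection with $\operatorname{rint}(\operatorname{dom}(f_0))$. This yields
\[(g\circ F)^*(u)=\min_{y\in -K^\circ}\left\{(\mydot{y}{F})^*(u)+g^*(y)\right\}\quad\forall u\in \mathbb{R}^n,\]
with the minimum attained. Third, I would apply \cite[Theorem 16.4]{rockafellar2015convex} to the sum $f_0+g\circ F$ to obtain
\[(f_0+g\circ F)^*(\bar v)=\min_{w\in \mathbb{R}^n}\left\{f_0^*(w)+(g\circ F)^*(\bar v-w)\right\},\]
again with attainment. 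The required qualification condition $\operatorname{rint}(\operatorname{dom}(f_0))\cap \operatorname{rint}(\operatorname{dom}(g\circ F))\neq \emptyset$ is extracted from \eqref{burkecorollary:qc}: any $\bar x\in \operatorname{rint}(\operatorname{dom}(f_0))\cap \operatorname{rint}(\operatorname{dom}(F))$ whose image $F(\bar x)$ lies in $\operatorname{rint}(\operatorname{dom}(g)-K)=\operatorname{rint}(\operatorname{dom}(g))$ satisfies $\bar x\in \operatorname{rint}(\operatorname{dom}(g\circ F))$ by relative-interior calculus applied to the $K$-convex preimage of $\operatorname{dom}(g)$ under $F$. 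Substituting the second-step formula into this identity and fusing the two minima into a single joint minimum over $(w,y)\in \mathbb{R}^n\times (-K^\circ)$ yields the claimed identity.

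The main obstacle I foresee is the relative-interior calculus in the qualification argument: verifying that $F(\bar x)\in \operatorname{rint}(\operatorname{dom}(g))$ forces $\bar x\in \operatorname{rint}(\operatorname{dom}(g\circ F))$. Although the affine-preimage rule provides a template, here $F$ is only $K$-convex, so the argument must exploit $K$-convexity of $F$ and $K$-monotonicity of $g$ (via $\operatorname{dom}(g)-K=\operatorname{dom}(g)$) to reduce the situation to a setting where a standard preimage formula applies, rather than being immediate.
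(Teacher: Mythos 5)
First, a point of comparison: the paper does not actually prove \cref{burkecorollary} --- it is imported verbatim as \cite[Corollary 3]{burke2021study} --- so there is no in-paper proof to match. The paper's own self-contained analogue is \cref{simplecompositecoro}, obtained from \cref{simplecompositeadditive}, and it splits the sum in the opposite order from yours: there the conjugate-of-a-sum rule is applied \emph{inside} the infimum over $y$, to the pair $f_0$ and $\mydot{y}{F}$, so the only relative-interior condition needed is $\operatorname{rint}(\operatorname{dom}(f_0))\cap\operatorname{rint}(\operatorname{dom}(F))\neq\emptyset$, a condition involving $\operatorname{dom}(F)$ alone. You apply the sum rule \emph{outside}, to $f_0$ and $g\circ F$, which forces you to locate $\operatorname{rint}(\operatorname{dom}(g\circ F))$. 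That choice is legitimate (and is presumably close to what \cite{burke2021study} does, given the form of \eqref{burkecorollary:qc}), but it is exactly where your argument is incomplete.

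Two concrete gaps. (i) The step you yourself flag --- that $\bar{x}\in\operatorname{rint}(\operatorname{dom}(f_0))\cap\operatorname{rint}(\operatorname{dom}(F))$ with $F(\bar{x})\in\operatorname{rint}(\operatorname{dom}(g)-K)$ implies $\bar{x}\in\operatorname{rint}(\operatorname{dom}(g\circ F))$ --- is not a routine preimage computation; it is the mathematical heart of the corollary. One has $\operatorname{dom}(g\circ F)=\{x\,:\,(x,z)\in K\text{-}\operatorname{epi}(F)\text{ for some }z\in\operatorname{dom}(g)\}$ (using $\operatorname{dom}(g)-K=\operatorname{dom}(g)$), so the claim needs a description of $\operatorname{rint}\left(K\text{-}\operatorname{epi}(F)\right)$ combined with the projection and intersection rules for relative interiors; without carrying this out, the application of \cite[Theorem 16.4]{rockafellar2015convex} in your third step is unjustified. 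Note that the paper's route through \cref{simplecompositeadditive} sidesteps this entirely. (ii) The replacement $f_0\leftarrow f_0+\delta_{\operatorname{dom}(F)}$ is harmless on the left of the target identity but not on the right: it replaces $f_0^*$ by $(f_0+\delta_{\operatorname{dom}(F)})^*=(f_0+\mydot{0}{F})^*$, a different function in general. To recover the stated formula you would need, for each $y$, the identity $\min_w(f_0+\delta_{\operatorname{dom}(F)})^*(w)+(\mydot{y}{F})^*(\bar{v}-w)=\min_w f_0^*(w)+(\mydot{y}{F})^*(\bar{v}-w)$, which is another instance of the sum rule applied to the bare $f_0$ and $\mydot{y}{F}$ and hence requires $f_0$ itself to be proper and convex with $\operatorname{rint}(\operatorname{dom}(f_0))\cap\operatorname{rint}(\operatorname{dom}(F))\neq\emptyset$ --- precisely the information about the bare $f_0$ that the replacement was meant to let you avoid. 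Either work with $f_0$ directly from the start, reading the hypothesis $y_0f_0+\mydot{y}{F}\in\Gamma_0(\mathbb{R}^n)$ for all $(y_0,y)\in\mathbb{R}_+\times(-K^\circ)$ as supplying what is needed, or close this loop explicitly.
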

From \cref{simplecompositeadditive} we can get the following analogous corollary

\begin{corollary}\label{simplecompositecoro}
For $f_0:\mathbb{R}^n\to \overline{\mathbb{R}}$, $F:\mathbb{R}^n\to \mathbb{R}^m\cup\{+\infty_\bullet\}$ and $g:\mathbb{R}^m\to \overline{\mathbb{R}}$, let $\mydot{y}{F}:\mathbb{R}^n\to \overline{\mathbb{R}}$ given by \eqref{extendedscalarizationdef}, 
  $f_0 + g \circ F:\mathbb{R}^n\to \overline{\mathbb{R}}$ be given by \eqref{generalcompositef} and $f:\mathbb{R}^n\times \mathbb{R}^m\to \overline{\mathbb{R}}$ be given by \eqref{extendedcompositedef}.

If $f_0\in \Gamma(\mathbb{R}^n)$,  $h\in \Gamma\left(\mathbb{R}^n\times \mathbb{R}^m\right)$, $f\in \Gamma_0(\mathbb{R}^n\times \mathbb{R}^m)$, 
\begin{equation}\label{simplecompositecoro:inf:conv:condion}
    \operatorname{rint}\left(\operatorname{dom}(f_0)\right)\cap\operatorname{rint}\left(\operatorname{dom}(F)\right)\neq \emptyset
\end{equation}
and there exists $K\in\mathcal{K}_F\cap\mathcal{K}_{g}$ such that either 
  \begin{equation}\label{simplecompositecoro:general:condition}
   \operatorname{rint}\left(\operatorname{dom}(g)\right)\cap \operatorname{rint}\left(F\left(\operatorname{dom}(f_0)\cap\operatorname{dom}(F)\right)+K\right)\neq \emptyset,
\end{equation}
or  $f$ is PWLQ, and
 \begin{equation}\label{simplecompositecoro:pwlq:condition}
     \left(\operatorname{dom}(g)-K\right)\cap \left(F\left(\operatorname{dom}(f_0)\cap\operatorname{dom}(F)\right)\right)\neq \emptyset,
\end{equation}
then, for any $\bar{v}\in \mathbb{R}^n$ we have
\begin{equation}
    \left(f_0 + g \circ F\right)^*(\bar{v})=\min_{y \in \mathbb{R}^m,\;w\in \mathbb{R}^n}f_0^*(w)+\left(\mydot{y}{ F}\right)^*(\bar{v}-w)+g^*(y),
\end{equation}
\end{corollary}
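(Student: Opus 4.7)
The plan is to derive Corollary~\ref{simplecompositecoro} as a consequence of Proposition~\ref{simplecompositeadditive} by converting its $K$-convexity based qualification conditions into the set-based conditions the proposition demands. All other hypotheses of Proposition~\ref{simplecompositeadditive} are either identical to hypotheses of the corollary ($f_0\in\Gamma(\mathbb{R}^n)$, $h\in\Gamma(\mathbb{R}^n\times\mathbb{R}^m)$, $f\in\Gamma_0(\mathbb{R}^n\times\mathbb{R}^m)$ and \eqref{simplecompositecoro:inf:conv:condion}) or can be deduced cheaply. The one item that needs work is that $h(x,u)$ be closed in $u$. Using \eqref{simplecompositecoro:inf:conv:condion} I would pick $x_0\in\operatorname{rint}(\operatorname{dom}(f_0))\cap\operatorname{rint}(\operatorname{dom}(F))$; closedness of $f$ (as a joint function on $\mathbb{R}^n\times\mathbb{R}^m$) then gives closedness of the section $f(x_0,\cdot)=f_0(x_0)+g(F(x_0)+\cdot)$, which after a translation by $F(x_0)$ implies that $g$ itself is closed, and hence $h(x,\cdot)$ (which is either identically $+\infty$ or a translate of $g$) is closed in $u$ for every $x\in\mathbb{R}^n$.

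For the qualification step, I would show that any $K\in\mathcal{K}_F\cap\mathcal{K}_g$ satisfying \eqref{simplecompositecoro:general:condition} forces $0\in\operatorname{rint}(U)$, and similarly that \eqref{simplecompositecoro:pwlq:condition} forces $0\in U$, where $U$ is as in \eqref{Udef}. This is the direct analogue of Lemma~\ref{cscomparisonlemma:label} with $F(\operatorname{dom}(F))$ replaced throughout by $F(\operatorname{dom}(f_0)\cap\operatorname{dom}(F))$, and its proof hinges on the same two ingredients: the inclusion $\operatorname{dom}(g)-K\subseteq\operatorname{dom}(g)$ that follows from $K$-increasingness of $g$, together with the standard relative interior calculus for convex sets. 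The PWLQ case is essentially immediate once $\operatorname{dom}(g)-K\subseteq\operatorname{dom}(g)$ is invoked, while the general case is obtained by chasing these identities through the relative interior operator.

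With both steps in place, Proposition~\ref{simplecompositeadditive} delivers the equality $(f_0+g\circ F)^*(\bar v)=\tilde{\rho}(\bar v)$ with $\tilde{\rho}$ as in \eqref{rhotildedef}. To upgrade the defining infimum to a minimum I would invoke attainment of the inner infimum over $y$ from part~3 of Theorem~\ref{simplecomposite}, and attainment of the outer infimum over $w$ (at the optimal $y$) from \cite[Theorem~16.4]{rockafellar2015convex}, both of which apply under the qualification conditions verified in the previous step. The main obstacle I anticipate is precisely that qualification translation: one must check that replacing $\operatorname{dom}(F)$ by $\operatorname{dom}(f_0)\cap\operatorname{dom}(F)$ in the relative interior arguments of Lemma~\ref{cscomparisonlemma:label} introduces no gap, which is where the hypothesis \eqref{simplecompositecoro:inf:conv:condion} is crucially used to keep the relative interior identities intact.
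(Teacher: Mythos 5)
Your proposal follows essentially the same route as the paper: Corollary~\ref{simplecompositecoro} is obtained by applying Proposition~\ref{simplecompositeadditive}, after translating the $K$-convexity qualification conditions into $0\in\operatorname{rint}(U)$ (resp.\ $0\in U$) via the direct analogue of Lemma~\ref{cscomparisonlemma:label} with $F(\operatorname{dom}(F))$ replaced by $F(\operatorname{dom}(f_0)\cap\operatorname{dom}(F))$, and your arguments for closedness of $h(x,\cdot)$ and for attainment of the minimum are sound. One minor correction: condition \eqref{simplecompositecoro:inf:conv:condion} is not what keeps the relative-interior identities intact in the qualification translation (convexity of $\operatorname{dom}(f_0)$ already makes $F\left(\operatorname{dom}(f_0)\cap\operatorname{dom}(F)\right)+K$ convex and \cite[Corollary 6.6.2]{rockafellar2015convex} applies without it); its actual role is as the Fenchel-sum qualification required by Proposition~\ref{simplecompositeadditive} and \cite[Theorem 16.4]{rockafellar2015convex}, but since you assume it in any case the argument is unaffected.
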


Similarly to the case $f_0\equiv 0$ we can see that PWLQ conditions \eqref{simplecompositecoro:inf:conv:condion} and \eqref{simplecompositecoro:pwlq:condition} from \cref{simplecompositecoro} are weaker than condition \eqref{burkecorollary:qc} from \cref{burkecorollary}, while general conditions \eqref{simplecompositecoro:inf:conv:condion} and \eqref{simplecompositecoro:general:condition} from \cref{simplecompositecoro} and \eqref{burkecorollary:qc} from \cref{burkecorollary} are incomparable.

Finally, note that for \cref{simplecompositecoro} we can relax \eqref{simplecompositecoro:inf:conv:condion} if we have additional information on $f_0$ and $F$. For instance, based on the comments after \cite[Theorem E.2.3.2]{hiriart2004fundamentals} we can relax \eqref{simplecompositecoro:inf:conv:condion} to $\operatorname{dom}(f_0)\cap\operatorname{dom}(F)\neq \emptyset$ if $f_0$  and $\mydot{\bar{y}}{ F}$ are polyhedral for all $y\in \mathbb{R}^m$.

\section{Omitted Proofs}\label{omitted:proofs:section}

\subsection{Proofs from Section~\ref{convex:comp:section}}
\GNLPLagrangianLemma*
\begin{proof}{\textbf{Proof of Lemma~\ref{gnlplagrangianlemma}}\hspace*{0.5em}}
\label{gnlplagrangianlemma:proof}
If $x \in \operatorname{dom}(F)$, then 
\begin{align*}
l(x,y)= -\sup_{u\in\mathbb{R}^m}\left\{\mydot{y}{u}-f(x,u)\right\}&=f_0(x)-\sup_{u\in\mathbb{R}^m}\left\{\mydot{y}{u}-g\left(F(x)+u\right)\right\}\\
&=f_0(x)-\sup_{u\in\mathbb{R}^m}\left\{\mydot{y}{u-F(x)}-g\left(u\right)\right\}\\
&= f_0(x)+\mydot{y}{F(x)} -g^*(y).
\end{align*}
If $x \not\in \operatorname{dom}(F)$, then both sides of \eqref{gnlplagrangianlemma:char:a} are $+\infty$.

For \eqref{gnlplagrangianlemma:char:b} we have
\begin{align*}
f^*(v,y)&=\sup_{x\in \mathbb{R}^n,u\in \mathbb{R}^m}\left\{\mydot{v}{x}+\mydot{y}{u}-f(x,u)\right\}\\
&=\sup_{x \in \operatorname{dom}(F),u\in \mathbb{R}^m}\left\{\mydot{v}{x}+\mydot{y}{u}-f_0(x) -g(F(x)+u)\right\}\\
&=\sup_{x \in \operatorname{dom}(F)}\left\{\mydot{v}{x}-f_0(x)-\inf_{u\in \mathbb{R}^m}\left\{g(F(x)+u)-\mydot{y}{u}\right\}\right\}\\
&=\sup_{x \in \operatorname{dom}(F)}\left\{\mydot{v}{x}-f_0(x)-\inf_{u\in \mathbb{R}^m}\left\{g(u)-\mydot{y}{u-F(x)}\right\}\right\}\\
&=\sup_{x \in \operatorname{dom}(F)}\left\{\mydot{v}{x}-f_0(x)-\mydot{y}{F(x)}+g^*(y)\right\}\\
& = \left(f_0 +\mydot{y}{ F}\right)^*(v)+g^*(y)
\end{align*}
 \Halmos \end{proof}

\subsection{Proofs from Section~\ref{connection:section}}

\allconvexityprop*
\begin{proof}{\textbf{Proof}\hspace*{0.5em}}
\label{allconvexityprop:proof}
First, \eqref{weakallconvexitypropequiv} follows from \cite[Proposition 1.b]{burke2021study}. For \eqref{allconvexitypropequiv} note that if $F$ is $K$-convex, then $H:\mathbb{R}^n \times \mathbb{R}^m\to \mathbb{R}^m\cup\{+\infty_\bullet\}$ given by $H(x,u)=F(x)+u$ is also $K$-convex so $\mathcal{K}_F\subseteq \mathcal{K}_H$. In addition, $\operatorname{rng}(H)=\mathbb{R}^m$ so $\mathcal{K}_{g}=\mathcal{K}_{g,\operatorname{rng}(H)}$. Hence, $\mathcal{K}_F\cap\mathcal{K}_{g}\neq \emptyset$ implies $\mathcal{K}_H\cap\mathcal{K}_{g,\operatorname{rng}(H)}\neq \emptyset$, which by \eqref{weakallconvexitypropequiv} implies that $g\circ H = f$ is convex.

Under the additional assumptions on $g$ and $F$ we have that $f$ is proper and for any $\bar{x}\in \mathbb{R}^n$ we have that $f(\bar{x},\cdot)$ is closed and convex (possibly with $f(\bar{x},\cdot)\equiv +\infty$). Hence, $f$ is a \emph{dualizing parametrization} as in \cite[Definition 11.45]{rockafellar2009variational}. Then, by \cite[Proposition 11.48]{rockafellar2009variational} and Lemma~\ref{gnlplagrangianlemma} (with $f_0\equiv 0$) we have that $l:\mathbb{R}^n\times \mathbb{R}^m\to \overline{\mathbb{R}}$ given by
$l(x,y)= \mydot{y}{F(x)} -g^*(y)$ is convex in $x$ if and only if $f(x,u)$ is convex in $(x,u)$. Hence, 
\begin{subequations}
\begin{align}
  \text{$f(x,u)$ is convex in $(x,u)$} &\Leftrightarrow \mydot{y}{F} \text{ is convex} \quad\forall y\in \operatorname{dom}\left(g^*\right)\\  
  \label{jointconvexityhznconvexlemma:eq1}
    &\Leftrightarrow \mydot{y}{F} \text{ is convex} \quad\forall y\in \operatorname{cl}\operatorname{cone}\operatorname{dom}\left(g^*\right)\\  
    &\Leftrightarrow \mydot{y}{F} \text{ is convex} \quad\forall y\in \left(\operatorname{hzn}(g)\right)^\circ,  \label{jointconvexityhznconvexlemma:eq2}\\  
    &\Leftrightarrow F \text{ is $(-\operatorname{hzn}(g))$-convex}, \label{jointconvexityhznconvexlemma:eq3}
\end{align}
\end{subequations}
where \eqref{jointconvexityhznconvexlemma:eq1} follows because the set of convex functions is a closed convex cone, \eqref{jointconvexityhznconvexlemma:eq2} follows from \cite[Theorem 14.2]{rockafellar2015convex}, and \eqref{jointconvexityhznconvexlemma:eq3} follows from \cite[Theorem 3]{burke2021study} by noting that $\operatorname{hzn}(g)$ is a closed convex cone  (e.g \cite[Proposition 55]{gissler2023note}). Then, by Lemma~\ref{KFprop} we gave that $(-\operatorname{hzn}(g))\in \mathcal{K}_F\cap\mathcal{K}_{g}$.
 \Halmos \end{proof}

\firstequivlemmastatement*
\begin{proof}{\textbf{Proof of Lemma~\ref{firstequivlemma}\hspace*{0.5em}}}
\label{firstequivlemma:proof}
Follows from the proof of 
\cite[Corollary 1]{burke2021study} as follows. 

Because $\operatorname{rge}(F)\neq \emptyset$ we have $K\text{-}\operatorname{epi}(F)\neq\emptyset$ and \cite[Lemma 5]{burke2021study} implies $f(x,u)= \psi^*(x,u)$ for $\psi:\mathbb{R}^n\times \mathbb{R}^m\to \overline{\mathbb{R}}$ given by $\psi(v,y):=\sigma_{K\text{-}\operatorname{epi}(F)}(v,-y)+g^*(y)$. Under the assumptions,  $\sigma_{K\text{-}\operatorname{epi}(F)}\in \Gamma_0\left(\mathbb{R}^n\times \mathbb{R}^m\right)$ and $g^*\in \Gamma_0\left(\mathbb{R}^m\right)$. Then, either $\psi\equiv +\infty$ or $\psi\in \Gamma_0\left(\mathbb{R}^n\times \mathbb{R}^m\right)$. However, by \cite[Exercise 11.2]{rockafellar2009variational}, if $\psi\equiv +\infty$, then $f\equiv-\infty$, which contradicts $g\in \Gamma_0\left(\mathbb{R}^m\right)$ being proper. Hence, $\psi\in\Gamma_0\left(\mathbb{R}^n\times \mathbb{R}^m\right)$ and $f\in \Gamma_0\left(\mathbb{R}^n\times \mathbb{R}^m\right)$.

 \Halmos \end{proof}

\cscomparisonlemma*
\begin{proof}{\textbf{Proof of Lemma~\ref{cscomparisonlemma:label}}\hspace*{0.5em}}
\label{cscomparisonlemma:proof}
For $f_0\equiv 0$ we have that $U=\operatorname{dom}(g)-F\left( \operatorname{dom}(F)\right)$. In addition, by \cite[Lemma 3]{burke2021study} if $K\in \mathcal{K}_g$, then $\operatorname{dom}(g)=\operatorname{dom}(g)-K$. 
Hence, 
\[U=\left(\operatorname{dom}(g)-K\right)-F\left( \operatorname{dom}(F)\right)=\operatorname{dom}(g)-\left(F\left( \operatorname{dom}(F)\right)+K\right),\] so $0\in U$ is equivalent to \eqref{cspwlqf}. We also have that if $K\in \mathcal{K}_F$, then $\operatorname{rng}(F)+K$ is convex because it is the projection of the $K$-epigraph of $F$ onto the $z$ variables. Then, by convexity of $\operatorname{dom}(g)$ and \cite[Corollary 6.6.2]{rockafellar2015convex} we have that 
\[\operatorname{rint}\left(U\right)=\operatorname{rint}\left(\operatorname{dom}(g)\right)-\operatorname{rint}\left(F\left( \operatorname{dom}(F)\right)+K\right)\]
and hence $0\in \operatorname{rint}(U)$ is equivalent to \eqref{csgeneralf}.

Condition \eqref{burke:qc:rewritten} is simply a re-ordering of \eqref{burke:qc}.
 \Halmos \end{proof}

\bibliographystyle{plain}
\bibliography{references}

% Appendix here
% Options are (1) APPENDIX (with or without general title) or
%             (2) APPENDICES (if it has more than one unrelated sections)
% Outcomment the appropriate case if necessary
%
\begin{APPENDICES}
\section{Proofs and Examples from Section~\ref{perturbationsection}}\label{perturbationsection:proofs}
\subsection{Proof of Theorem~\ref{newGWDOP}, Proposition~\ref{partialconvex:label} and Theorem~\ref{NewConvexStrongDual}}

To prove Theorem~\ref{newGWDOP}, Proposition~\ref{partialconvex:label} and Theorem~\ref{NewConvexStrongDual} we rely on the following lemmas.

\begin{lemma}\label{doubleconjugatelemma}
Let $h:\mathbb{R}^n\to \overline{R}$. Then $h^*$ is convex and 
\begin{equation}\label{doubleconjugateineq}
    h\geq \operatorname{co}(h)\geq \operatorname{cl}\operatorname{co}(h) \geq h^{**}.
\end{equation}
Equality holds throughout \eqref{doubleconjugateineq} if $h\in \Gamma_0\left(\mathbb{R}^m\right)$ or  $h\equiv +\infty$.
If $h\in \Gamma\left(\mathbb{R}^m\right)$, then both $h^*\in \Gamma_0\left(\mathbb{R}^m\right)$, $h^{**}\in \Gamma_0\left(\mathbb{R}^m\right)$ and $\operatorname{cl}(h) = h^{**}$.
\end{lemma}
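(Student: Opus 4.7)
The plan is to invoke standard Fenchel-Moreau-style results while being careful about degenerate cases. First, convexity of $h^*$ is immediate from its definition: $h^*(v):=\sup_{x\in \mathbb{R}^n}\{\mydot{v}{x} - h(x)\}$ is a pointwise supremum of affine-in-$v$ functions, hence convex (and in fact closed).

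For the chain \eqref{doubleconjugateineq}, I would argue as follows. The inequality $h\geq\operatorname{co}(h)$ holds by the definition of $\operatorname{co}(h)$ as the largest convex function dominated by $h$. The inequality $\operatorname{co}(h)\geq\operatorname{cl}\operatorname{co}(h)$ follows because epigraphical closure enlarges the epigraph and thus pointwise lowers the function. For $\operatorname{cl}\operatorname{co}(h)\geq h^{**}$, observe that by Fenchel's inequality $h^{**}\leq h$ everywhere, and that $h^{**}$ is closed and convex (again a sup of affine functions, now in $x$). Hence $h^{**}$ is a closed convex minorant of $h$ and therefore dominated by $\operatorname{cl}\operatorname{co}(h)$, which is the largest such minorant.

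For the equality cases, the degenerate case $h\equiv+\infty$ is trivial: then $h^*\equiv-\infty$, $h^{**}\equiv+\infty$, and every term in \eqref{doubleconjugateineq} equals $+\infty$. For $h\in\Gamma_0(\mathbb{R}^m)$, the Fenchel-Moreau theorem (e.g.\ \cite[Theorem 11.1]{rockafellar2009variational}) yields $h=h^{**}$, which squeezes all three intermediate terms to equality.

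Finally, for the case $h\in\Gamma(\mathbb{R}^m)$, I would first verify that $h^*\in\Gamma_0(\mathbb{R}^m)$. Since $h$ is proper, picking any $\bar{x}\in\operatorname{dom}(h)$ gives the affine lower bound $h^*(v)\geq\mydot{v}{\bar{x}}-h(\bar{x})>-\infty$, and a standard separation argument applied to $\operatorname{epi}(h)$ produces an affine minorant of $h$, showing $h^*\not\equiv+\infty$. Combined with closedness and convexity of $h^*$ (and hence of $h^{**}$) already established, this yields $h^*,h^{**}\in\Gamma_0(\mathbb{R}^m)$. Since conjugation is unaffected by epigraphical closure for proper convex functions, $h^*=(\operatorname{cl} h)^*$, so $h^{**}=(\operatorname{cl} h)^{**}=\operatorname{cl}(h)$ by applying Fenchel-Moreau to $\operatorname{cl}(h)\in\Gamma_0(\mathbb{R}^m)$. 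The main obstacle is the properness argument for $h^*$ (requiring the separation-based existence of an affine minorant) together with the Fenchel-Moreau theorem itself; both are standard and can be cited from \cite{rockafellar2009variational,rockafellar2015convex}.
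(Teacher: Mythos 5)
Your proof is correct and follows the same route as the paper, which simply cites \cite[Theorem 11.1 and Exercise 11.2]{rockafellar2009variational}; you have essentially unfolded those standard Fenchel--Moreau facts (convexity and closedness of conjugates, $h^{**}$ as the largest closed convex minorant, properness of $h^*$ via an affine minorant, and $(\operatorname{cl}h)^*=h^*$). No gaps.
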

\begin{proof}{\textbf{Proof}\hspace*{0.5em}}
Direct from \cite[Theorem 11.1 and Exercise 11.2]{rockafellar2009variational}.
 \Halmos \end{proof}

\begin{lemma}\label{pqlemma}
For $f:\mathbb{R}^n\times \mathbb{R}^m\to \overline{\mathbb{R}}$, $\bar{v}\in \mathbb{R}^n$, and $\bar{u}\in \mathbb{R}^m$ let $p_{\bar{v}}:\mathbb{R}^m\to \overline{R}$ and $q_{\bar{u}}:\mathbb{R}^n\to\overline{R}$ be given by
\begin{align*}
    p_{\bar{v}}(\bar{u})&:=\inf_{x\in \mathbb{R}^n}\left\{ f(x,\bar{u})-\mydot{\bar{v}}{x}\right\}\\
    q_{\bar{u}}(\bar{v})&:=\inf_{y\in\mathbb{R}^m}\left\{ f^*(\bar{v},y) - \mydot{y}{\bar{u}}\right\}.
\end{align*}
Then, $q_{\bar{u}}$ is convex and for all $\bar{x}\in \mathbb{R}^n$, $\bar{y}\in\mathbb{R}^m$, $\bar{v}\in \mathbb{R}^n$, and $\bar{u}\in \mathbb{R}^m$ we have
\begin{subequations}
\begin{align}
p_{\bar{v}}^*(\bar{y})&=f^*(\bar{v},\bar{y})\label{pqlemmaeq:1}\\
    p_{\bar{v}}^{**}(\bar{u})&=\sup_{y\in \mathbb{R}^m}\left\{\mydot{\bar{u}}{y}-f^*(\bar{v},y)\right\}=-q_{\bar{u}}(\bar{v})\label{pqlemmaeq:2}\\
    q_{\bar{u}}^*(\bar{x})&=f^{**}(\bar{x},\bar{u})\leq f(\bar{x},\bar{u})\label{pqlemmaeq:3}\\
    q_{\bar{u}}^{**}(\bar{v})&\geq\sup_{x\in \mathbb{R}^n} \left\{\mydot{\bar{v}}{x} -f(x,\bar{u})\right\}=-p_{\bar{v}}(\bar{u}).\label{pqlemmaeq:4}
\end{align}
\end{subequations}
If $f\in \Gamma_0\left(\mathbb{R}^n\times \mathbb{R}^m\right)$, then $p_{\bar{v}}$ is also convex and  equality holds in \eqref{pqlemmaeq:3} and \eqref{pqlemmaeq:4}.
\end{lemma}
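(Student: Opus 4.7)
The plan is to establish all four identities by direct computation from the definition of the convex conjugate, invoking Lemma~\ref{doubleconjugatelemma} only when $f^{**}$ needs to be related to $f$. First I would derive \eqref{pqlemmaeq:1} by unfolding definitions and collapsing the two suprema into a single supremum over the product space:
\begin{align*}
p_{\bar{v}}^*(\bar{y})
&=\sup_{u\in\mathbb{R}^m}\left\{\mydot{\bar{y}}{u}-\inf_{x\in\mathbb{R}^n}\left\{f(x,u)-\mydot{\bar{v}}{x}\right\}\right\}\\
&=\sup_{(x,u)\in\mathbb{R}^n\times\mathbb{R}^m}\left\{\mydot{\bar{v}}{x}+\mydot{\bar{y}}{u}-f(x,u)\right\}=f^*(\bar{v},\bar{y}).
\end{align*}
Equation \eqref{pqlemmaeq:2} is then immediate from the definition of the biconjugate $p_{\bar{v}}^{**}$ combined with the definition of $q_{\bar{u}}$.

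Next I would handle the $q_{\bar{u}}$ side symmetrically. The same template applied to $q_{\bar{u}}$ yields $q_{\bar{u}}^*(\bar{x})=f^{**}(\bar{x},\bar{u})$, and then the inequality $f^{**}\leq f$ from Lemma~\ref{doubleconjugatelemma} completes \eqref{pqlemmaeq:3}. Substituting the formula just obtained for $q_{\bar{u}}^*$ into the definition of $q_{\bar{u}}^{**}$ gives $q_{\bar{u}}^{**}(\bar{v})=\sup_x\{\mydot{\bar{v}}{x}-f^{**}(x,\bar{u})\}$, and applying $f^{**}\leq f$ pointwise in $x$ produces the inequality in \eqref{pqlemmaeq:4}.

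For convexity of $q_{\bar{u}}$, I would invoke Lemma~\ref{doubleconjugatelemma} to get convexity of $f^*$ as an arbitrary conjugate; since $(v,y)\mapsto f^*(v,y)-\mydot{y}{\bar{u}}$ is then jointly convex, the partial infimum in $y$ yields a convex function of $v$. Under the additional hypothesis $f\in\Gamma_0(\mathbb{R}^n\times\mathbb{R}^m)$, the same partial-infimum argument applied to the jointly convex map $(x,u)\mapsto f(x,u)-\mydot{\bar{v}}{x}$ gives convexity of $p_{\bar{v}}$, and Lemma~\ref{doubleconjugatelemma} supplies $f^{**}=f$, upgrading the inequalities in \eqref{pqlemmaeq:3} and \eqref{pqlemmaeq:4} to equalities.

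The main obstacle is not really an obstacle: the entire argument is definitional bookkeeping together with two appeals to Lemma~\ref{doubleconjugatelemma}. The only step requiring even mild care is the exchange of iterated suprema, which is unconditionally valid since both orders equal the supremum over the product and no finiteness or regularity on $f$ is needed.
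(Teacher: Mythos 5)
Your proposal is correct and follows essentially the same route as the paper: \eqref{pqlemmaeq:1} and \eqref{pqlemmaeq:3} by expanding the conjugate definitions and collapsing the iterated suprema, \eqref{pqlemmaeq:2} and \eqref{pqlemmaeq:4} as immediate consequences, the inequality $f^{**}\leq f$ (and its upgrade to equality when $f\in\Gamma_0$) from Lemma~\ref{doubleconjugatelemma}, and convexity of $q_{\bar{u}}$ and $p_{\bar{v}}$ via inf-projection of a jointly convex function, which is exactly the content of the reference the paper cites for that step.
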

\begin{proof}{\textbf{Proof}\hspace*{0.5em}}
Convexity of $q_{\bar{u}}$ follows from \cite[Proposition 2.22]{rockafellar2009variational} by noting that by Lemma~\ref{doubleconjugatelemma} $f^*$ is convex. 

The equations in \eqref{pqlemmaeq:1} and \eqref{pqlemmaeq:3} follow by expanding the definitions of $p_{\bar{v}}^*$ and $q_{\bar{u}}^*$ and the inequality in \eqref{pqlemmaeq:3} follows from Lemma~\ref{doubleconjugatelemma}. We then directly get \eqref{pqlemmaeq:2} and \eqref{pqlemmaeq:4}.

Under the additional conditions on $f$ convexity of $p_{\bar{v}}$ follows from \cite[Proposition 2.22]{rockafellar2009variational} and the additional equalities follow from  Lemma~\ref{doubleconjugatelemma} applied to $f$.
 \Halmos \end{proof}

\begin{lemma}\label{subdifferentiallemma}
Let $h:\mathbb{R}^n\to \overline{\mathbb{R}}$ be a convex function, $\bar{x}\in \mathbb{R}^n$ and $\bar{v}\in \mathbb{R}^n$. Then, 
\begin{align*}
    \arg \max_{x\in \mathbb{R}^n} \left\{\mydot{\bar{v}}{x} - h(x)\right\}=\partial h^*(\bar{v})\\
    \arg \max_{v\in \mathbb{R}^n} \left\{\mydot{{v}}{\bar{x}} - h^*(v)\right\}=\partial h(\bar{x})
\end{align*}
\end{lemma}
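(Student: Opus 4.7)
The plan is to reduce each claimed equality to a Fenchel--Young equality statement and then close the loop using Lemma~\ref{doubleconjugatelemma}. For the first equality, unpacking the definitions shows that $\bar x\in \arg\max_x\{\langle \bar v,x\rangle - h(x)\}$ is equivalent to requiring $h(\bar x),h^*(\bar v)\in\mathbb{R}$ together with the Fenchel--Young equality $h(\bar x)+h^*(\bar v)=\langle \bar v,\bar x\rangle$, since $h^*(\bar v)=\sup_x\{\langle \bar v,x\rangle - h(x)\}$ by definition. On the other side, $\bar x\in\partial h^*(\bar v)$ forces $h^*(\bar v)\in\mathbb{R}$ and, after rearranging the subgradient inequality as $\langle \bar x,v\rangle - h^*(v)\leq \langle \bar x,\bar v\rangle - h^*(\bar v)$ and taking the supremum over $v$, is equivalent to $h^{**}(\bar x)+h^*(\bar v)=\langle \bar x,\bar v\rangle$ with $h^{**}(\bar x)\in\mathbb{R}$.

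For the ``$\subseteq$'' inclusion I would assume Fenchel--Young equality and, for an arbitrary $v$, bound $h^*(v)\geq \langle v,\bar x\rangle - h(\bar x)=h^*(\bar v)+\langle \bar x,v-\bar v\rangle$ directly from the supremum definition of $h^*$, which is exactly the required subgradient inequality. For the reverse ``$\supseteq$'' inclusion, the subgradient inequality already furnishes $h^{**}(\bar x)+h^*(\bar v)=\langle \bar x,\bar v\rangle$; invoking Lemma~\ref{doubleconjugatelemma} (which gives $h=h^{**}$ in the relevant closed-convex setting where the statement is being applied) I would substitute $h(\bar x)$ for $h^{**}(\bar x)$ and recover Fenchel--Young equality, hence membership in the $\arg\max$.

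The second equality then follows by applying the first to $h^*$ in place of $h$ and $\bar x$ in place of $\bar v$: this produces $\arg\max_v\{\langle v,\bar x\rangle - h^*(v)\}=\partial(h^*)^*(\bar x)=\partial h^{**}(\bar x)$, and Lemma~\ref{doubleconjugatelemma} once again identifies $h^{**}$ with $h$ so that $\partial h^{**}(\bar x)=\partial h(\bar x)$.

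The main (and only non-routine) step is the ``$\supseteq$'' direction: the subgradient information on $h^*$ only controls the biconjugate $h^{**}$, so bridging the gap back to $h$ itself requires the equality $h=h^{**}$ supplied by Lemma~\ref{doubleconjugatelemma}. Everything else is direct unpacking of the defining inequalities and the definition of the conjugate.
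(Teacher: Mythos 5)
Your argument is correct and is, in substance, the same as the paper's: the paper disposes of Lemma~\ref{subdifferentiallemma} by citing Theorem 11.3 and Exercise 11.2 of \cite{rockafellar2009variational}, and that theorem's content is exactly the Fenchel--Young / inversion-rule computation you write out by hand, so you have simply inlined the citation. The one point worth dwelling on is the step you yourself single out: the inclusions that pass from subgradient information back to the $\arg\max$ sets require $h=h^{**}$, which Lemma~\ref{doubleconjugatelemma} supplies only for $h\in\Gamma_0(\mathbb{R}^n)$ (for a proper convex but non-closed $h$ it gives only $\operatorname{cl}(h)=h^{**}$). This is not a defect you introduced: the lemma as stated assumes only convexity of $h$, and at that level of generality it is actually false --- take $h=\delta_{(0,1)}$ on $\mathbb{R}$ and $\bar{v}=0$, for which the left-hand side of the first identity is $(0,1)$ while $h^*(v)=\max(v,0)$ gives $\partial h^*(0)=[0,1]$ --- and the cited Theorem 11.3 likewise assumes $h$ proper and lower semicontinuous. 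So your proof establishes the lemma under the hypothesis $h\in\Gamma_0(\mathbb{R}^n)$ (or $h\equiv+\infty$), exactly as the paper's citation does, and your explicit tracking of where $h=h^{**}$ enters makes the tacit extra hypothesis visible rather than hiding it behind a reference. Your reduction of the second identity to the first via $h\mapsto h^*$ is also sound, since $h^{***}=h^*$ holds unconditionally; closedness of $h$ is needed there only for the final identification $\partial h^{**}(\bar{x})=\partial h(\bar{x})$, as you note.
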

\begin{proof}{\textbf{Proof}\hspace*{0.5em}}
Follows from \cite[Theorem 11.3]{rockafellar2009variational} and  \cite[Exercise 11.2]{rockafellar2009variational}.
 \Halmos \end{proof}

\GeneralWeakDual*
\begin{proof}{\textbf{Proof of Theorem~\ref{newGWDOP}}\hspace*{0.5em}}
\label{newdualweakGNLPduality:proof}

We begin by noting that for all $\bar{v}\in \mathbb{R}^n$ and $\bar{y}\in \mathbb{R}^m$ we have \begin{equation}\label{newGNLP:proof:eq}
    f^*(\bar{v},\bar{y})=\sup_{x\in\mathbb{R}^n}\left\{\mydot{\bar{v}}{x}-l(x,\bar{y})\right\}
\end{equation} which yields the last equation in \eqref{newweakGNLPduality} and \eqref{newdualweakGNLPduality}.

Then \eqref{newweakGNLPduality} and \eqref{newdualweakGNLPduality} follow from Lemma~\ref{pqlemma} and Lemma~\ref{doubleconjugatelemma} (applied to the functions $p_{\bar{v}}$ and $q_{\bar{u}}$ defined in Lemma~\ref{pqlemma}).

The equivalence between \eqref{newGNLP:optcond1}, \eqref{newGNLP:optcond2} follows by noting that for any $(\bar{x},\bar{y})\in \mathbb{R}^n\times \mathbb{R}^m$
\begin{subequations}\label{sadelineqs}
\begin{align} f(\bar{x},\bar{u})-\mydot{\bar{v}}{\bar{x}}&\geq \left(f(\bar{x},\cdot)\right)^{**}(\bar{u})-\mydot{\bar{v}}{\bar{x}}\\&=\sup_{y\in \mathbb{R}^{m}}\left\{l(\bar{x},y)+\mydot{\bar{u}}{ y}-\mydot{\bar{v}}{\bar{x}}\right\}\\
&\geq l(\bar{x},\bar{y})+\mydot{\bar{u}}{ \bar{y}}-\mydot{\bar{v}}{\bar{x}}\\
&\geq \inf_{x\in \mathbb{R}^n}\left\{ l({x},\bar{y}) +\mydot{\bar{u}}{\bar{y}}-\mydot{\bar{v}}{{x}}\right\}\\
&= \mydot{\bar{u}}{\bar{y}}-f^*(\bar{v},\bar{y})\end{align}
\end{subequations}
where the first inequality follows from Lemma~\ref{doubleconjugatelemma}, the first equality follows from $l(x,y)= -\left(f(x,\cdot)\right)^*(y)$ and the last equality follows from \eqref{newGNLP:proof:eq}.
 \Halmos \end{proof}

\partialconvex*
\begin{proof}{\textbf{Proof of Proposition~\ref{partialconvex:label}}\hspace*{0.5em}}
\label{partialconvex:proof}

All results follow from \eqref{sadelineqs} by noting that if $f$ is proper and $f(x,u)$ is closed and convex in $u$, then $f(x,\cdot)\equiv+\infty$ or $f(x,u)$ is proper, closed and convex in $u$ so by Lemma~\ref{doubleconjugatelemma} we have $f(\bar{x},\bar{u})= \left(f(\bar{x},\cdot)\right)^{**}(\bar{u})$.
 \Halmos \end{proof}

\CSD*
\begin{proof}{\textbf{Proof of Theorem~\ref{NewConvexStrongDual}}\hspace*{0.5em}}
\label{NewConvexStrongDual:proof}

Convexity of $q_{\bar{u}}$ follows from Lemma~\ref{pqlemma}.

The convexity statements for $l$ follow from \cite[Proposition 11.48]{rockafellar2009variational} and the equivalence between \eqref{newGNLP:optcond3} and \eqref{newGNLP:optcond4} follows from Lemma~\ref{subdifferentiallemma}.

The statements on $p_{\bar{v}}$ and $q_{\bar{u}}$ follow from Lemmas~\ref{pqlemma} and \ref{subdifferentiallemma}.

The remainder of the results follow from Lemma~\ref{pqlemma}, Lemma~\ref{doubleconjugatelemma} (applied to the functions $p_{\bar{v}}$ and $q_{\bar{u}}$).
 \Halmos \end{proof}

\subsection{Qualification conditions}\label{perturbationsection:qualification}

Most sufficient conditions for the qualification conditions in Theorem~\ref{NewConvexStrongDual} are based on the domains of functions $p_{\bar{v}}$ and $q_{\bar{u}}$. The following straightforward lemma notes that for such conditions we can restrict our attention to the cases $\bar{v}=$ and $\bar{u}=0$.

\begin{lemma}\label{pqdomlemmalabel}
Let  $f:\mathbb{R}^n\times \mathbb{R}^m\to \overline{\mathbb{R}}$, $\bar{v}\in \mathbb{R}^n$, $\bar{u}\in \mathbb{R}^m$, and $p_{\bar{v}}:\mathbb{R}^m\to \overline{\mathbb{R}}$, $q_{\bar{u}}:\mathbb{R}^n\to\overline{\mathbb{R}}$ be the functions given by \eqref{pqdef}. Then, 
\begin{alignat*}{3}
     \operatorname{dom}(p_0) &= \operatorname{dom}(p_{\bar{v}})&\quad&\forall \bar{v}\in \mathbb{R}^n\\
      \operatorname{dom}(q_0) &= \operatorname{dom}(q_{\bar{u}})&\quad&\forall \bar{u}\in \mathbb{R}^m.
\end{alignat*}
\end{lemma}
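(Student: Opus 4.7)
The plan is to observe that both equalities are instances of a single elementary principle: the effective domain of an infimum of the form $\inf_{x\in\mathbb{R}^n}\{g(x,\cdot)-\mydot{v}{x}\}$ is independent of $v\in\mathbb{R}^n$, because the subtracted linear term $\mydot{v}{x}$ is always a (finite) real number and hence cannot affect whether a value equals $+\infty$. I would therefore carry out both equalities through the same two-line argument, simply swapping roles.

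For the first equality I would expand the definition: $\bar{u}\in\operatorname{dom}(p_{\bar{v}})$ means $p_{\bar{v}}(\bar{u})<+\infty$, which is equivalent to the existence of some $x\in\mathbb{R}^n$ with $f(x,\bar{u})-\mydot{\bar{v}}{x}<+\infty$. Since $\mydot{\bar{v}}{x}\in\mathbb{R}$, this is in turn equivalent to the existence of some $x\in\mathbb{R}^n$ with $f(x,\bar{u})<+\infty$, i.e., to $p_0(\bar{u})<+\infty$, which is exactly $\bar{u}\in\operatorname{dom}(p_0)$. (One only has to note the convention that an infimum of a function that is identically $+\infty$ is $+\infty$, while the existence of even one $x$ with a finite infimand value immediately drops the infimum below $+\infty$.)

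For the second equality the same reasoning applies verbatim with $f^\ast$ in place of $f$ and the roles of $x,y$ and of $\bar{v},\bar{u}$ swapped: $\bar{v}\in\operatorname{dom}(q_{\bar{u}})$ iff there is some $y\in\mathbb{R}^m$ with $f^\ast(\bar{v},y)-\mydot{\bar{u}}{y}<+\infty$, iff some $y$ satisfies $f^\ast(\bar{v},y)<+\infty$, iff $\bar{v}\in\operatorname{dom}(q_0)$.

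I do not expect any genuine obstacle here; the statement is essentially a book-keeping observation. The only conceptual point worth highlighting explicitly in the write-up is that the equivalences rely exclusively on the finiteness of the inner products $\mydot{\bar{v}}{x}$ and $\mydot{\bar{u}}{y}$, and do not require any convexity, closedness, or properness hypotheses on $f$; this is why the lemma can be invoked freely in the proofs of \cref{strongattainmentcoro:label} and \cref{strongattainmentcorodual:label}.
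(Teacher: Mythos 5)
Your proof is correct and is exactly the elementary argument the paper has in mind: the lemma is stated without proof (the paper calls it ``straightforward''), and your observation that the finite linear term $\mydot{\bar{v}}{x}$ (resp.\ $\mydot{\bar{u}}{y}$) cannot change whether the infimand, and hence the infimum, is $+\infty$ is the whole content. Your remark that no convexity, closedness, or properness of $f$ is needed is also accurate and consistent with how the lemma is used.
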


\subsubsection{Conditions for asymptotic strong duality}

For $f\in \Gamma\left(\mathbb{R}^n\times \mathbb{R}^m\right)$, both $p_{\bar{v}}$ and $q_{\bar{u}}$ are guaranteed to not be identically $+\infty$. Exploiting some dual relationships between these functions (e.g. Lemma~\ref{pqlemma}), we can ensure one of the function avoids taking the value $-\infty$ through conditions on the domain of the other one as follows.

\begin{restatable}{lemma}{pqproperlemma}\label{pqproperlemmalabel}
Let  $f\in \Gamma\left(\mathbb{R}^n\times \mathbb{R}^m\right)$, $\bar{v}\in \mathbb{R}^n$, $\bar{u}\in \mathbb{R}^m$, and $p_{\bar{v}}:\mathbb{R}^m\to \overline{\mathbb{R}}$, $q_{\bar{u}}:\mathbb{R}^n\to\overline{\mathbb{R}}$ be the functions given by \eqref{pqdef}.

\begin{itemize}
    \item If $\bar{v}\in \operatorname{dom}\left(q_{0}\right)$ (or equivalently if $f^*(\bar{v},\cdot)\not\equiv +\infty$), then $p_{\bar{v}}$ is proper, and
      \item if $\bar{u}\in \operatorname{dom}\left(p_{0}\right)$  (or equivalently if $f(\cdot,\bar{u})\not\equiv +\infty$), then $q_{\bar{u}}$ is proper
\end{itemize}
\end{restatable}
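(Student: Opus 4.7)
The plan is to read off both statements from the identities~\eqref{pqlemmaeq:1} and~\eqref{pqlemmaeq:3} of Lemma~\ref{pqlemma}, which express $p_{\bar{v}}^*$ in terms of $f^*$ and $q_{\bar{u}}^*$ in terms of $f^{**}$, combined with Lemma~\ref{doubleconjugatelemma}, which guarantees that $f^*\in\Gamma_0\left(\mathbb{R}^n\times\mathbb{R}^m\right)$ and $f^{**}=\operatorname{cl}(f)\in\Gamma_0\left(\mathbb{R}^n\times\mathbb{R}^m\right)$ whenever $f\in\Gamma\left(\mathbb{R}^n\times\mathbb{R}^m\right)$. The parenthetical equivalences are immediate from~\eqref{pqdef}: $\bar{v}\in\operatorname{dom}(q_0)$ is the statement $\inf_y f^*(\bar{v},y)<+\infty$, i.e., $f^*(\bar{v},\cdot)\not\equiv+\infty$, and similarly for $p_0$. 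In each case, properness of the target function is obtained by exhibiting a single point at which its conjugate is a finite real number, which simultaneously supplies an affine minorant (preventing the value $-\infty$) and precludes the function from being identically $+\infty$.

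For the first claim, given $\bar{v}\in\operatorname{dom}(q_0)$ I would pick $y_0$ with $f^*(\bar{v},y_0)<+\infty$; properness of $f^*\in\Gamma_0$ then gives $f^*(\bar{v},y_0)\in\mathbb{R}$. Identity~\eqref{pqlemmaeq:1} yields $p_{\bar{v}}^*(y_0)=f^*(\bar{v},y_0)\in\mathbb{R}$, so $p_{\bar{v}}(u)\geq\mydot{y_0}{u}-p_{\bar{v}}^*(y_0)>-\infty$ for every $u\in\mathbb{R}^m$. And $p_{\bar{v}}\not\equiv+\infty$, since otherwise $p_{\bar{v}}^*\equiv-\infty$, contradicting $p_{\bar{v}}^*(y_0)\in\mathbb{R}$.

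For the second claim, given $\bar{u}\in\operatorname{dom}(p_0)$ I would pick $x_0$ with $f(x_0,\bar{u})<+\infty$. Identity~\eqref{pqlemmaeq:3} together with Lemma~\ref{doubleconjugatelemma} yields $q_{\bar{u}}^*(x_0)=f^{**}(x_0,\bar{u})\leq f(x_0,\bar{u})<+\infty$, while properness of $f^{**}\in\Gamma_0$ forces $q_{\bar{u}}^*(x_0)>-\infty$, so $q_{\bar{u}}^*(x_0)\in\mathbb{R}$ and $q_{\bar{u}}>-\infty$ follows from the same affine-minorant argument. Finally, $q_{\bar{u}}\not\equiv+\infty$ holds because $f^*$ is proper, so any $(v_0,y_0)\in\operatorname{dom}(f^*)$ gives $q_{\bar{u}}(v_0)\leq f^*(v_0,y_0)-\mydot{\bar{u}}{y_0}<+\infty$. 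I do not anticipate any real obstacle; the only subtlety is that $f\in\Gamma$ is not assumed closed, so to apply~\eqref{pqlemmaeq:3} usefully one must route the required lower bound on $q_{\bar{u}}^*(x_0)$ through properness of $f^{**}\in\Gamma_0$ rather than through $f$ itself.
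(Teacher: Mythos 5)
Your proof is correct and follows essentially the same route as the paper: both arguments combine the conjugacy identities of Lemma~\ref{pqlemma} with the properness of $f$, $f^*$ and $f^{**}$ from Lemma~\ref{doubleconjugatelemma}, the only cosmetic difference being that you extract a single finite affine minorant via Fenchel's inequality where the paper invokes the full weak-duality bounds $p_{\bar{v}}(\bar{u})\geq -q_{\bar{u}}(\bar{v})$ and $q_{\bar{u}}(\bar{v})\geq -p_{\bar{v}}(\bar{u})$ (which amount to the same thing). Your closing remark about routing the lower bound through $f^{**}\in\Gamma_0$ rather than $f$ itself is exactly the right care to take.
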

\begin{proof}{\textbf{Proof}\hspace*{0.5em}}
We start by noting that $ p_{\bar{v}}\not\equiv +\infty$ for all $\bar{v}\in \mathbb{R}^n$ and $q_{\bar{u}}\not\equiv +\infty$ for all $\bar{u}\in \mathbb{R}^m$. This follows from $f$ and $f^*$ being proper under the assumptions on $f$ and Lemma~\ref{doubleconjugatelemma}.

For the remaining statements, first note that if $f^*(\bar{v},\cdot)\not\equiv +\infty$, then $q_{\bar{u}}(\bar{v})<+\infty$ for all $\bar{u}\in \mathbb{R}^m$. Similarly, if $f(\cdot,\bar{u})\not\equiv +\infty$, then $p_{\bar{v}}(\bar{u})<+\infty $ for all $\bar{v}\in \mathbb{R}^n$. In addition,  by Lemmas~\ref{doubleconjugatelemma} and \ref{pqlemma}, we have that   $p_{\bar{v}}(\bar{u})\geq -q_{\bar{u}}(\bar{v})$ and $q_{\bar{u}}(\bar{v})\geq -p_{\bar{v}}(\bar{u})$ for all $\bar{v}\in \mathbb{R}^n$ and $\bar{u}\in \mathbb{R}^m$. Hence, if $f^*(\bar{v},\cdot)\not\equiv +\infty$, then  $p_{\bar{v}}(\bar{u})>-\infty$ for all $\bar{u}\in\mathbb{R}^m$ and if $f(\cdot,\bar{u})\not\equiv +\infty$, then $q_{\bar{u}}(\bar{v})>-\infty$ for all $\bar{v}\in \mathbb{R}^n$. The result follows because we already have that $ p_{\bar{v}}$ and $q_{\bar{u}}$  are never identically $+\infty$.
 \Halmos \end{proof}

\subsubsection{Conditions for non-asymptotic strong duality}

A simple Slater-like condition for non-asymptotic strong duality can be obtained by replacing $\operatorname{dom}$ by the interior of $\operatorname{dom}$ in Lemma~\ref{pqproperlemmalabel}. However, Proposition~\ref{pqclosedproposition:label} below shows we can obtain slightly weaker conditions in general and much weaker conditions for PWLQ functions. 

\begin{restatable}{proposition}{pqclosedproposition}\label{pqclosedproposition:label}
Let $f\in \Gamma_0\left(\mathbb{R}^n\times \mathbb{R}^m\right)$, $\bar{v}\in \mathbb{R}^n$, $\bar{u}\in \mathbb{R}^m$, and $p_{\bar{v}}:\mathbb{R}^m\to \overline{\mathbb{R}}$, $q_{\bar{u}}:\mathbb{R}^n\to\overline{\mathbb{R}}$ be the functions given by \eqref{pqdef}.

Then $q_{\bar{u}}\in \Gamma_0\left(\mathbb{R}^n\right)$  if any of the following holds
\begin{itemize}
\item $\bar{u}\in \operatorname{rint} \operatorname{dom}(p_0)$, or
\item $\bar{u}\in  \operatorname{dom}(p_0)$ and $f$ is PWLQ.
\end{itemize}
\end{restatable}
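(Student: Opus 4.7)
The strategy is to show that $q_{\bar u}$ equals its biconjugate, which---together with properness (Lemma~\ref{pqproperlemmalabel}, since $\bar u\in \operatorname{dom}(p_0)$ under either hypothesis), convexity (Lemma~\ref{pqlemma}), and Lemma~\ref{doubleconjugatelemma}---yields $q_{\bar u}\in \Gamma_0(\mathbb{R}^n)$. Lemma~\ref{pqlemma} applied to $f\in \Gamma_0$ already provides $q_{\bar u}^{**}(\bar v)=-p_{\bar v}(\bar u)$ together with the unconditional identity $q_{\bar u}(\bar v)=-p_{\bar v}^{**}(\bar u)$, so $q_{\bar u}=q_{\bar u}^{**}$ is equivalent to the strong-duality identity $p_{\bar v}(\bar u)=p_{\bar v}^{**}(\bar u)$ holding for every $\bar v\in \mathbb{R}^n$.

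To bring hypotheses stated in terms of $p_0$ to bear on an arbitrary $\bar v$, I would introduce the shifted function $\tilde f_{\bar v}(x,u):=f(x,u)-\mydot{\bar v}{x}$, which stays in $\Gamma_0(\mathbb{R}^n\times \mathbb{R}^m)$ and inherits the PWLQ property from $f$. A direct computation gives $\tilde f_{\bar v}^*(w,y)=f^*(w+\bar v,y)$, so the standard primal value function of $\tilde f_{\bar v}$ is exactly $p_{\bar v}$, which by Lemma~\ref{pqdomlemmalabel} has the same domain $\operatorname{dom}(p_0)$. Condition~1 then gives $\bar u\in \operatorname{rint}\operatorname{dom}(p_{\bar v})$ and condition~2 gives $\bar u\in \operatorname{dom}(p_{\bar v})$ with $\tilde f_{\bar v}$ PWLQ, placing us in a setting where the identity can be obtained by a standard argument uniformly in $\bar v$.

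Under condition~1, $p_{\bar v}(\bar u)=p_{\bar v}^{**}(\bar u)$ is the classical statement that a convex function agrees with its biconjugate on the relative interior of its domain (continuity on $\operatorname{rint}\operatorname{dom}(p_{\bar v})$ in the proper case, and $p_{\bar v}\equiv p_{\bar v}^{**}\equiv -\infty$ on that set in the improper sub-case $\bar v\notin \operatorname{dom}(q_0)$). Under condition~2, the plan is to deduce that $p_{\bar v}$ itself is PWLQ via a marginal-function result such as \cite[Proposition~11.32]{rockafellar2009variational}, and then to use that PWLQ convex functions are closed on all of $\operatorname{dom}$ to extend $p_{\bar v}=p_{\bar v}^{**}$ past the relative interior to any boundary point permitted by condition~2. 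The main obstacle I anticipate is precisely this last step: propagating PWLQ through the marginal operation while correctly handling the degenerate case where the infimum defining $p_{\bar v}$ is unbounded below---so that $p_{\bar v}$ is improper and not PWLQ in the strictest sense---yet the identity must still hold on the relative boundary of $\operatorname{dom}(p_0)$ where condition~2 may place $\bar u$.
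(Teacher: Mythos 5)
Your argument for the first bullet is correct but takes a genuinely different route from the paper. You reduce $q_{\bar u}=q_{\bar u}^{**}$ to the pointwise identity $p_{\bar v}(\bar u)=p_{\bar v}^{**}(\bar u)$ for every $\bar v$ (via \eqref{pqlemmaeq:2} and \eqref{pqlemmaeq:4}), then invoke the classical fact that a convex function agrees with its biconjugate on the relative interior of its domain, using $\operatorname{dom}(p_{\bar v})=\operatorname{dom}(p_0)$ (Lemma~\ref{pqdomlemmalabel}) so that the single hypothesis $\bar u\in \operatorname{rint}\operatorname{dom}(p_0)$ serves uniformly in $\bar v$; closedness then follows because a biconjugate is always closed, with properness and convexity supplied by Lemmas~\ref{pqproperlemmalabel} and~\ref{pqlemma} as you say. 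The paper instead sandwiches $\operatorname{epi}(q_{\bar u})$ between $L(C)$ and $\operatorname{cl}(L(C))$, where $C$ is the epigraph of $(v,y)\mapsto f^*(v,y)-\mydot{\bar u}{y}$ and $L(v,y,z)=(v,z)$, and shows that $\bar u\in\operatorname{rint}\operatorname{dom}(p_0)$ is exactly the recession-cone condition $C_\infty\cap N(L)\subseteq -\left(C_\infty\cap N(L)\right)$ guaranteeing that $L(C)$ is closed. Your route is arguably more elementary (Fenchel--Moreau plus relative-interior continuity of convex functions) and makes the primal--dual symmetry explicit; the paper's route keeps the whole argument in the dual variables and plugs directly into standard closed-image criteria.

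For the second bullet there is a genuine gap, and you have put your finger on it yourself: routing the PWLQ case through $p_{\bar v}=p_{\bar v}^{**}$ forces you to control $p_{\bar v}$ for \emph{every} $\bar v$, including those $\bar v\notin\operatorname{dom}(q_0)$ for which $p_{\bar v}$ may be improper. An improper convex function can be finite on the relative boundary of its domain while its biconjugate is identically $-\infty$, and under the second hypothesis $\bar u\in\operatorname{dom}(p_0)$ may be exactly such a boundary point, so the identity you need is not automatic and \cite[Proposition 11.32]{rockafellar2009variational}, stated for proper inf-projections, does not directly cover it. The fix is to drop the detour through $p_{\bar v}$ in this branch and apply the PWLQ machinery to $q_{\bar u}$ itself, which is what the paper does: $f$ PWLQ implies $f^*$ PWLQ by \cite[Theorem 11.14]{rockafellar2009variational}, hence $(v,y)\mapsto f^*(v,y)-\mydot{\bar u}{y}$ is PWLQ by \cite[Proposition 10.21]{rockafellar2009variational}, and its inf-projection $q_{\bar u}$ is proper by Lemma~\ref{pqproperlemmalabel} (this is where $\bar u\in\operatorname{dom}(p_0)$ enters), so \cite[Proposition 11.32]{rockafellar2009variational} yields that $q_{\bar u}$ is convex PWLQ and therefore closed.
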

\begin{proof}{\textbf{Proof}\hspace*{0.5em}}
First note that
\begin{equation}\label{pqclosedproposition:Lepieq}
    L(C)\subseteq \operatorname{epi}(q_{\bar{u}})\subseteq \operatorname{cl}(L(C))
\end{equation}
for $C:=\left\{(v,y,z)\in \mathbb{R}^{n+m+1}\,:\, f^*(v,y)-\mydot{\bar{u}}{y}\leq z\right\}$
and the linear transformation $L:\mathbb{R}^{n+m+1}\to \mathbb{R}^{n+1}$ given by $L(v,y,z)=(v,z)$ whose nullspace is $N(L):=\left\{(v,y,z)\in \mathbb{R}^{n+m+1}\,:\, v=0,\;z=0\right\}$ (e.g. \cite[Proposition 3.3.1]{bertsekas2009convex}). In addition, by Lemma~\ref{doubleconjugatelemma} we have that $f^*\in \Gamma_0\left(\mathbb{R}^n\times \mathbb{R}^m\right)$, so $C$ is closed and convex. By \cite[Exercise 3.29]{rockafellar2009variational} we have that 
\[C_\infty\cap N(L)=\left\{(v,y,z)\in \mathbb{R}^{n+m+1}\,:\, f^{*\infty}(0,y)-\mydot{\bar{u}}{y}\leq 0,\; v=0,\;z=0\right\}.\]

We claim that 
\begin{equation}\label{pqclosedproposition:eq1}
    C_\infty\cap N(L)\subseteq(-(C_\infty\cap N(L)))\cap (C_\infty\cap N(L))
\end{equation}
is equivalent to $\bar{u}\in \operatorname{rint} \operatorname{dom}(p_0)$. To show this first note that $\bar{u}\in \operatorname{rint} \operatorname{dom}(p_0)$ is equivalent to having  $y\neq 0$ imply
\begin{align*}
    \sigma_{\operatorname{dom}(p_0)}(y)&> \mydot{\bar{u}}{y},\quad\text{ or}\quad
    -\sigma_{\operatorname{dom}(p_0)}(-y)=\sigma_{\operatorname{dom}(p_0)}(y)=\mydot{\bar{u}}{y}.
\end{align*}
In addition, by \cite[Theorem 11.5]{rockafellar2009variational}
$\sigma_{\operatorname{dom}(p_0)}(y)=\sigma_{\operatorname{dom}(f)}(0,y)=f^{*\infty}(0,y)$. Hence, $\bar{u}\in \operatorname{rint} \operatorname{dom}(p_0)$ is also equivalent to having  $y\neq 0$ imply
\begin{align*}
    f^{*\infty}(0,y)-\mydot{\bar{u}}{y}&> 0,\quad\text{ or}\quad
        f^{*\infty}(0,y)-\mydot{\bar{u}}{y}=f^\infty(0,-y)-\mydot{\bar{u}}{-y}=0,
\end{align*}
which in turn is equivalent to \eqref{pqclosedproposition:eq1}

Now by \cite[Proposition 1.4.13]{bertsekas2009convex} we have that $L(C)$ is closed if $C_\infty\cap N(L)\subseteq(-(C_\infty\cap N(L)))\cap (C_\infty\cap N(L))\subseteq (-C_\infty)\cap C_\infty$. Hence, the first sufficient condition implies $L(C)$ is closed.  By \eqref{pqclosedproposition:Lepieq}, this condition also implies $q_{\bar{u}}$ is closed. In addition, from the equivalence between \eqref{pqclosedproposition:eq1} and $\bar{u}\in \operatorname{rint} \operatorname{dom}(p_0)$ and Lemma~\ref{pqproperlemmalabel} we have that $q_{\bar{u}}$ is proper. Convexity of $q_{\bar{u}}$ follows from convexity of $f^*$ by \cite[Propositon 2.22]{rockafellar2009variational}.

The second sufficient condition follows from \cite[Proposition 10.21, Theorem 11.14 and  Proposition 11.32]{rockafellar2009variational}.
 \Halmos \end{proof}

\begin{proposition}\label{pqclosedproposition:primal:label}
Let $f\in \Gamma_0\left(\mathbb{R}^n\times \mathbb{R}^m\right)$, $\bar{v}\in \mathbb{R}^n$, $\bar{u}\in \mathbb{R}^m$, and $p_{\bar{v}}:\mathbb{R}^m\to \overline{\mathbb{R}}$, $q_{\bar{u}}:\mathbb{R}^n\to\overline{\mathbb{R}}$ be the functions given by \eqref{pqdef}.

Then $p_{\bar{v}} \in\Gamma_0\left(\mathbb{R}^m\right)$ if any of the following holds
\begin{itemize}
\item $\bar{v}\in \operatorname{rint} \operatorname{dom}(q_0)$, or
\item $\bar{v}\in  \operatorname{dom}(q_0)$ and $f$ is PWLQ.
\end{itemize}
\end{proposition}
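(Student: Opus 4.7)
The plan is to obtain this result as a direct application of the already-proved \cref{pqclosedproposition:label} to an appropriate ``coordinate-swapped'' companion of $f$. Specifically, I would define $\tilde{f}:\mathbb{R}^m\times\mathbb{R}^n\to \overline{\mathbb{R}}$ by $\tilde{f}(y,v):=f^*(v,y)$. Since $f\in\Gamma_0(\mathbb{R}^n\times\mathbb{R}^m)$, \cref{doubleconjugatelemma} gives $f^*\in\Gamma_0(\mathbb{R}^n\times\mathbb{R}^m)$, and permuting coordinates preserves closedness, properness and convexity, so $\tilde{f}\in\Gamma_0(\mathbb{R}^m\times\mathbb{R}^n)$. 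A direct computation using $f^{**}=f$ (again from \cref{doubleconjugatelemma}) yields $\tilde{f}^*(u,x)=f(x,u)$.

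Next, I would relate the primal and dual value functions of $\tilde{f}$, defined by the analog of \eqref{pqdef}, to those of $f$. A short calculation gives $\tilde{p}_{\bar{u}}(\bar{v})=q_{\bar{u}}(\bar{v})$ and $\tilde{q}_{\bar{v}}(\bar{u})=p_{\bar{v}}(\bar{u})$ for all $(\bar{v},\bar{u})\in\mathbb{R}^n\times\mathbb{R}^m$. In particular, $\operatorname{dom}(\tilde{p}_0)=\operatorname{dom}(q_0)$ as subsets of $\mathbb{R}^n$, and the condition ``$\tilde{f}$ is PWLQ'' appearing in \cref{pqclosedproposition:label} is equivalent to ``$f$ is PWLQ'' because PWLQ is preserved under coordinate permutations and under Legendre-Fenchel conjugation within $\Gamma_0$ (e.g.\ \cite[Theorem 11.14]{rockafellar2009variational}).

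Applying \cref{pqclosedproposition:label} to $\tilde{f}$ with $\tilde{u}:=\bar{v}\in\mathbb{R}^n$ then yields $\tilde{q}_{\tilde{u}}\in\Gamma_0(\mathbb{R}^m)$ under either $\bar{v}\in\operatorname{rint}\operatorname{dom}(q_0)$, or $\bar{v}\in\operatorname{dom}(q_0)$ together with $f$ being PWLQ. Using $\tilde{q}_{\bar{v}}=p_{\bar{v}}$, this delivers exactly the two sufficient conditions claimed for $p_{\bar{v}}\in\Gamma_0(\mathbb{R}^m)$.

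The only mildly non-trivial step is the PWLQ reduction, which relies on the fact that the Legendre-Fenchel conjugate of a closed proper PWLQ function is again PWLQ. If one preferred to avoid invoking that external ingredient, an alternative is to mirror the proof of \cref{pqclosedproposition:label} verbatim on the primal side: set $C':=\{(x,u,z)\in\mathbb{R}^{n+m+1}\,:\,f(x,u)-\mydot{\bar{v}}{x}\le z\}$ and $L'(x,u,z):=(u,z)$, observe $L'(C')\subseteq\operatorname{epi}(p_{\bar{v}})\subseteq \operatorname{cl}(L'(C'))$, and then run the same horizon-cone analysis using the identity $\sigma_{\operatorname{dom}(q_0)}(x)=\sigma_{\operatorname{dom}(f^*)}(x,0)=f^\infty(x,0)$ (via $f^{**}=f$ and \cite[Theorem 11.5]{rockafellar2009variational}) in place of the $f^{*\infty}$ identity used before. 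Either route fits the style and length of the preceding proofs, with the duality-based one being notably shorter.
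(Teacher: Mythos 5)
Your proposal is correct, and your primary route is genuinely different from the paper's. The paper proves this proposition simply by declaring it ``analogous to the proof of Proposition~\ref{pqclosedproposition:label}'' --- i.e., exactly your fallback: redo the horizon-cone/projection argument with $C':=\{(x,u,z)\,:\,f(x,u)-\mydot{\bar{v}}{x}\le z\}$, the projection $L'(x,u,z)=(u,z)$, and the identity $\sigma_{\operatorname{dom}(q_0)}(x)=\sigma_{\operatorname{dom}(f^*)}(x,0)=f^{\infty}(x,0)$ in place of $f^{*\infty}(0,y)$. Your main route instead exploits the primal--dual symmetry directly: the swapped function $\tilde{f}(y,v):=f^*(v,y)$ lies in $\Gamma_0(\mathbb{R}^m\times\mathbb{R}^n)$ with $\tilde{f}^*(u,x)=f(x,u)$ by biconjugation, the identifications $\tilde{p}_{\bar{u}}=q_{\bar{u}}$ and $\tilde{q}_{\bar{v}}=p_{\bar{v}}$ check out against \eqref{pqdef}, and $\operatorname{dom}(\tilde{p}_0)=\operatorname{dom}(q_0)$, so Proposition~\ref{pqclosedproposition:label} applied to $\tilde{f}$ delivers the claim. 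The one external ingredient you need beyond what the verbatim-mirror route uses is that conjugation preserves the PWLQ property within $\Gamma_0$ (\cite[Theorem 11.14]{rockafellar2009variational}), which the paper already invokes elsewhere (in the proof of Corollary~\ref{strongattainmentcoro:label}), so this is unobjectionable. What the duality reduction buys is a genuinely shorter argument that makes the primal/dual symmetry explicit rather than implicit in the word ``analogous''; what the mirrored proof buys is independence from the conjugacy-preserves-PWLQ fact and a self-contained verification that nothing in the asymmetric-looking argument actually breaks on the primal side. Either is acceptable here.
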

\begin{proof}{\textbf{Proof}\hspace*{0.5em}}
 Analogous to the proof of   Proposition~\ref{pqclosedproposition:label}.
 \Halmos \end{proof}

\subsubsection{Conditions for optimal value attainment}

Non-emptyness of the sub-differentials in \eqref{newGNLP:args} can be ensured through standard results such as the following lemma.

\begin{restatable}{lemma}{subdifflemma}\label{subdifflemma:label}
Let $h:\mathbb{R}^n\to\overline{\mathbb{R}}$ be a convex function and $\bar{x}\in \mathbb{R}^n$. Then,  $\partial h(\bar{x})\neq \emptyset$ if any of the following holds
\begin{itemize}
    \item $\bar{x}\in \operatorname{rint} \operatorname{dom}(h)$ and $h(\bar{x})>-\infty$, or
    \item $\bar{x}\in \operatorname{dom}(h)$ and h is piecewise linear-quadratic.
\end{itemize}
\end{restatable}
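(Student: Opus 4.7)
The plan is to treat the two sufficient conditions separately, in each case reducing to a standard textbook result once the appropriate preliminary reductions have been made. The lemma is essentially a classical fact from convex analysis, so the bulk of the work is in verifying that the hypotheses cleanly match a form in which standard theorems apply, while being careful that $h$ is not assumed \emph{a priori} to be proper.

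For the first case (where $\bar{x}\in \operatorname{rint}\operatorname{dom}(h)$ and $h(\bar{x})>-\infty$), my first step would be to argue that these two hypotheses together force $h$ to be proper. The key tool is the classical result (e.g., \cite[Theorem 7.2]{rockafellar2015convex}) that any convex function which attains the value $-\infty$ somewhere must attain the value $-\infty$ throughout $\operatorname{rint}\operatorname{dom}(h)$. Since $h(\bar{x})>-\infty$ and $\bar{x}\in \operatorname{rint}\operatorname{dom}(h)$, this rules out $h$ taking the value $-\infty$ anywhere, and since $\operatorname{dom}(h)\ni\bar{x}$ is non-empty, $h\in\Gamma(\mathbb{R}^n)$. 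Once properness is established, the conclusion $\partial h(\bar{x})\neq\emptyset$ follows immediately from the standard existence theorem for sub-gradients at relative interior points of the domain of a proper convex function (e.g., \cite[Theorem 23.4]{rockafellar2015convex}, or \cite[Theorem 8.30]{rockafellar2009variational} in the variational-analysis language used throughout this paper).

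For the second case (where $h$ is PWLQ and $\bar{x}\in \operatorname{dom}(h)$), the reduction is even more direct: the structural assumption that $h$ is piecewise linear-quadratic gives, via \cite[Proposition 10.21]{rockafellar2009variational}, that $h$ is proper and closed on its polyhedral domain, and then \cite[Proposition 11.32]{rockafellar2009variational} (or equivalent statements relying on the polyhedral structure of $\operatorname{dom}(h)$ and the smoothness of $h$ on each polyhedral piece) yields $\operatorname{dom}(\partial h)=\operatorname{dom}(h)$, so $\partial h(\bar{x})\neq\emptyset$ for any $\bar{x}\in\operatorname{dom}(h)$. No relative-interior assumption is needed in this case because the PWLQ structure guarantees sub-gradients all the way to the boundary of the polyhedral domain.

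The main obstacle is conceptual rather than technical, and lies entirely in the first case: the statement of the lemma allows $h$ to \emph{a priori} take the value $-\infty$ at some points, whereas the standard subdifferential-existence theorems are formulated for proper convex functions. The argument above resolves this cleanly via \cite[Theorem 7.2]{rockafellar2015convex}, after which both cases reduce to invoking standard references.
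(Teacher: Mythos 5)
Your proposal is correct and follows essentially the same route as the paper: in the first case it establishes properness of $h$ from $h(\bar{x})>-\infty$ at a relative-interior point and then invokes \cite[Theorem 23.4]{rockafellar2015convex}, and in the second case it reduces to \cite[Proposition 10.21]{rockafellar2009variational}. The only cosmetic difference is that you cite \cite[Theorem 7.2]{rockafellar2015convex} for the fact that an improper convex function is $-\infty$ throughout the relative interior of its domain, whereas the paper reproves that fact inline with a short convexity argument.
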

\begin{proof}{\textbf{Proof}\hspace*{0.5em}}
For the first case, we begin by noting that if $\bar{x}\in \operatorname{rint} \operatorname{dom}(h)$ and $h(\bar{x})>-\infty$, then $h$ is proper. Indeed, $h(\bar{x})\in \mathbb{R}$ so $h\not\equiv +\infty$. In addition, if there exist $\bar{x}'\in \operatorname{dom}(h)$ such that $h(\bar{x}')=-\infty$, then by convexity of $h$ and $\bar{x}\in \operatorname{rint} \operatorname{dom}(h)$, there exist $\lambda>0$ such that \[h(\bar{x})\leq \lambda h\left(\frac{\bar{x}+(\lambda-1)\bar{x}'}{\lambda}\right)+(1-\lambda)h\left(\bar{x}'\right)=-\infty,\]
which contradicts $h(\bar{x})>-\infty$. The result then follows from \cite[Theorem 23.4]{rockafellar2015convex}.

The second case follows from \cite[Propositon 10.21]{rockafellar2009variational}.
 \Halmos \end{proof}

\subsubsection{Slater-like conditions}
\strongattainmentcoro*

\begin{proof}{\textbf{Proof of Corollary~\ref{strongattainmentcoro:label}}\hspace*{0.5em}}
\label{strongattainmentcoro:label:proof}

By \cite[Theorem 11.14 and Proposition 11.32]{rockafellar2009variational} we have that if $f$ is PWLQ then so are $p_{\bar{v}}$ and $q_{\bar{u}}$. The bullets follow from Proposition~\ref{pqclosedproposition:label}, Lemmas~\ref{pqproperlemmalabel} and \ref{subdifflemma:label}. Theorem~\ref{NewConvexStrongDual} (and $q_{\bar{u}}$ being proper) then implies $-q_{\bar{u}}\left(\bar{v}\right)=p_{\bar{v}}\left(\bar{u}\right)\in\mathbb{R}\cup \{-\infty\}$ with $-q_{\bar{u}}\left(\bar{v}\right)\in\mathbb{R}\cup \{-\infty\}$. If $p_{\bar{v}}\left(\bar{u}\right)>-\infty$, then 
\eqref{dual:attainment:eq} holds for any $\bar{y}\in\partial p_{\bar{v}}(\bar{u})$ by Theorem~\ref{NewConvexStrongDual}. If $q_{\bar{u}}\left(\bar{v}\right)=-p_{\bar{v}}\left(\bar{u}\right)=+\infty$, then \eqref{dual:attainment:eq} holds for any $\bar{y}\in\mathbb{R}^m$ by the definition of $q_{\bar{u}}$ in \eqref{qdef}.
\Halmos \end{proof}

\strongattainmentcorodual*
\begin{proof}{\textbf{Proof of Corollary~\ref{strongattainmentcorodual:label}}\hspace*{0.5em}}
\label{strongattainmentcorodual:label:proof}

 Analogous to the proof of   Corollary~\ref{strongattainmentcoro:label} except for using Proposition~\ref{pqclosedproposition:primal:label} instead of Proposition~\ref{pqclosedproposition:label}.
 \Halmos \end{proof}

Finally, it is worth noting that the more general strong duality conditions of $p_{\bar{v}}$ or $q_{\bar{u}}$ being proper and closed are not sufficient to ensure optimal value attainments. Examples of such attainment failures can be found in Section~\ref{nonattainmentexamples}.

\subsubsection{Optimal value non-attainment examples}\label{nonattainmentexamples}

\begin{example}
Let $f:\mathbb{R}\times \mathbb{R}\to\overline{\mathbb{R}}$, be give by
$f(x,u):=x+\delta_{\mathbb{R}_-}\left(x^2+u\right)$. We can check (e.g. using Lemma~\ref{gnlplagrangianlemma}) that
\begin{align*}
    f^*(v,y)=\begin{cases}\frac{1}{4}\frac{(v-1)^2}{y}&y> 0\\
0 &v=1,y=0\\
+\infty &\text{o.w.}\end{cases}, \quad
p_0(u)=\begin{cases}-\sqrt{-u} &u\leq 0\\+\infty &\text{o.w.}\end{cases}
\end{align*}
and $q_0(v)\equiv 0$. Hence, $p_0$ and $q_0$ are proper and closed, so by Theorem~\ref{NewConvexStrongDual} both  \eqref{newweakGNLPduality} and \eqref{newdualweakGNLPduality} yield 
\[0=\inf_{x\in \mathbb{R}}\left\{x\,:\, x^2\leq 0\right\}=\inf_{x\in \mathbb{R}}f(x,0)=-\inf_{y\in \mathbb{R}}f^*(0,y)=-\inf_{y\in \mathbb{R}}\left\{\frac{1}{4 y}\,:\, y\geq 0\right\}.\]
In addition,  $\partial q_{0}(0)=\{0\}\neq\emptyset$ so \eqref{newGNLP:args} implies the primal optimal value is attained: 
\[\{0\}=\arg \min_{x\in \mathbb{R}}\left\{x\,:\, x^2\leq 0\right\}.\]
However, even thought $p_0(0)\in \mathbb{R}$, we have $\partial p_0(0)=\emptyset$ so \eqref{newGNLP:args} implies the dual optimal value is not attained.

We can check that a crucial difference between the primal and dual constraint qualification conditions is that $0\in  \operatorname{rint} \operatorname{dom}(q_0)$, while $0\not\in  \operatorname{rint} \operatorname{dom}(p_0)$.
\end{example}

\begin{example}
Let $f:\mathbb{R}\times \mathbb{R}\to\overline{\mathbb{R}}$, be given by
$f(x,u):=\exp(x)+u$. We can check  that
\begin{align*}
    f^*(v,y)=\begin{cases}v \ln(v) -v &v> 0,y=1\\
0 &v=0,y=1\\
+\infty &\text{o.w.}\end{cases}
\end{align*}
$p_0(u)=u$ and $q_0(v)=f^*(v,1)$. Hence, $p_0$ and $q_0$ are proper and closed, so by Theorem~\ref{NewConvexStrongDual} both  \eqref{newweakGNLPduality} and \eqref{newdualweakGNLPduality} yield 
\[0=\inf_{x\in \mathbb{R}}\exp(x)=\inf_{x\in \mathbb{R}}f(x,0)=-\inf_{y\in \mathbb{R}}f^*(0,y)=-\inf_{y\in \mathbb{R}}\left\{0\,:\, y= 1\right\}.\]
In addition,  $\partial p_{0}(0)=\{1\}\neq\emptyset$ so \eqref{newGNLP:args} implies the dual optimal value is attained: 
\[\{1\}=\arg \min_{y\in \mathbb{R}}\left\{0\,:\, y=1\right\}.\]
However, even thought $q_0(0)\in \mathbb{R}$, we have $\partial q_0(0)=\emptyset$ so \eqref{newGNLP:args} implies the primal optimal value is not attained.

We can check that a crucial difference between the primal and dual constraint qualification conditions is that $0\in  \operatorname{rint} \operatorname{dom}(p_0)$, while $0\not\in  \operatorname{rint} \operatorname{dom}(q_0)$.
\end{example}

\end{APPENDICES}
%
%   or
%
% \begin{APPENDICES}
% \section{<Title of Section A>}
% \section{<Title of Section B>}
% etc
% \end{APPENDICES}

\end{document}